\numberwithin{equation}{section}
\newtheorem{thm}{Theorem}[section]
\newtheorem{lem}{Lemma}[section]
\newtheorem{cor}{Corollary}[section]
\newtheorem{rem}{Remark}[section]
\theoremstyle{definition}
\newcommand{\D}{\displaystyle}
\newcommand{\beqno}{\begin{eqnarray*}}
\newcommand{\eeqno}{\end{eqnarray*}}
\begin{document}
\title{\bf Global Classical Solutions of  Viscous Liquid-gas Two-phase Flow Model }
\author{
Haibo Cui, \ Huanyao Wen\thanks{Corresponding author. E-mail:
huanyaowen@hotmail.com}, \ Haiyan Yin
\\[5mm]\
\textit{\small  Department of Mathematics, Central China Normal
University, Wuhan 430079, China}}

\date{}
\maketitle
\begin{abstract}
In this paper, we consider the global existence and uniqueness of
the classical solutions for the  3D  viscous liquid-gas two-phase
flow model. Initial data is only small in the energy-norm. Our main
ideas
 come from \cite{Z} where the existence of global classical
solutions to the compressible Navier-Stokes equations was obtained
by using the continuity methods under the assumption that the
initial energy is sufficiently small.

\vspace{4mm}

 {\noindent\textbf{Keyword:} Viscous liquid-gas two-phase flow model, classical solutions,  global existence.}\\

  {\noindent\textbf{AMS Subject Classification (2000):} 76T10, 76N10.}
\end{abstract}

\section{Introduction}
In this paper, we consider the following
viscous liquid-gas two-phase flow model
    \begin{equation}\label{2dbu-E1.1}
    \left\{
    \begin{array}{l}
        m_t+\mathrm{div}(m u)=0,\\
        n_t+\mathrm{div}(n u)=0,\\
 (m u)_t+\mathrm{div}(m u\otimes u)+\nabla P(m, n)=
            \mu\Delta
            u+(\mu+\lambda)\nabla \mathrm{div}u, \ \ {\rm in} \ \  \mathbb{R}^3\times (0, \infty),
    \end{array}
    \right.
    \end{equation}
with  the  initial and boundary conditions

    \begin{equation}
      (m, n, u)|_{t=0}=(m_0, n_0, u_0)(x),  \ \ \mathrm{in} \ \  \mathbb{R}^3, \label{2dbu-E1.2}
    \end{equation}

     \begin{equation}\label{2dbu-E1.3}
     u(x, t)\rightarrow0,\ \  m(x,t)\rightarrow \tilde{m}>0,\ \  n(x,t)\rightarrow \tilde{n}>0,\ \ \mathrm{as}\ \  |x|\rightarrow\infty,\ \ \
     t\ge0.
    \end{equation}
 Here $m=\alpha_l\rho_l$ and  $n=\alpha_g\rho_g$ denote liquid mass
and gas mass,  respectively;  $\mu$, $\lambda$ are viscosity
constants, satisfying
\begin{equation}\label{2dbu-E1.4}
\mu>0,\ \   2\mu+3\lambda\geq 0,
\end{equation} which deduces $\mu+\lambda>0$.
  The unknown
 variables $\alpha_l$, $\alpha_g\in[0,1]$ denote liquid and gas volume
 fractions respectively, satisfying the fundamental relation:
$\alpha_l+ \alpha_g=1.$ Furthermore, the other unknown variables $\rho_l$
and $\rho_g$ denote liquid and gas
densities respectively, satisfying equations of state:
$\rho_l=\rho_{l, 0}+\frac{P-P_{l, 0}}{a_l^2}$,
$\rho_g=\frac{P}{a_g^2}$, where $a_l,$ $a_g$ are sonic speeds,
respectively, in liquid and gas, and $P_{l,0}$ and $\rho_{l, 0}$ are respectively
the reference pressure and density given as constants; $u$ denotes
velocities of liquid and gas; $P$ is common
   pressure for both phases, which satisfies
   \begin{equation}\label{2dbu-E1.5}
  P(m, n)=C^0\left(-b(m, n)+\sqrt{b(m, n)^2+c(m, n)}\right),
   \end{equation}
with $C^0=\frac{1}{2}a_l^2$, $k_0=\rho_{l, 0}-\frac{P_{l,
0}}{a_l^2}>0$, $a_0=(\frac{a_g}{a_l})^2$ and
\begin{equation*}
b(m, n)=k_0-m-\left(\frac{a_g}{a_l}\right)^2n=k_0-m-a_0n,
\end{equation*}
\begin{equation*}
c(m, n)=4k_0\left(\frac{a_g}{a_l}\right)^2n=4k_0a_0n.
\end{equation*}

The detailed  explanations about the above model can refer to
\cite{Y.Z.Z}, we omit it here.

We should mention that the methods introduced by Evje and Karlsen in
\cite{EV1}, Yao, Zhang and Zhu in \cite{Y.Z.Z} for the two-phase flow model
and Hoff in \cite{Hoff2005,Hoff1995}, Zhang and Fang in \cite{Z.F},
Zhang in \cite{Z} for the single-phase Navier-Stokes equations will
play crucial roles in our proof here.

As in \cite{EV1}, we give the potential energy function $G$ in the
form
\begin{equation}\label{2dbu-w1.1}
G(m,\frac{n}{m})=m\int_{\tilde{m}}^m\frac{P(s,\frac{n}{m}s)-P(\tilde{m},\tilde{n})}{s^2}
ds+\frac{m}{\tilde{m}}P(\tilde{m},\tilde{n})-\frac{m}{\tilde{m}}P(\tilde{m},\frac{n}{m}\tilde{m}).
\end{equation}
Now we assume that the initial data $(m_0,n_0,u_0)$ will be measured in the norm given by
\begin{equation}\label{2dbu-w1.2}
E_0=\int\left(\frac{1}{2}m_0|u_0|^2+G\left(m_0,\frac{n_0}{m_0}\right)\right)dx.
\end{equation}
Let
\begin{equation}\label{2dbu-w1.3}
M=\int|\nabla u_0|^2dx.
\end{equation}
It follows that there is a constant $q$, which will be fixed throughout, such that
\begin{equation}\label{2dbu-w1.4}
q\in (1,\frac{4}{3}),\ \ \
   with \ \ \
q^2<\frac{4\mu}{\mu+\lambda}, \ \ \   and\ \ \ \lambda<3\mu.
\end{equation}
 The vorticity matrix and the
effective viscous flux are defined respectively as follows:
\begin{equation}\label{2dbu-w1.9}
\omega^{j,k}=\partial_ku^j-\partial_ju^k,
\end{equation}
and
\begin{equation}\label{2dbu-w1.5}
F=(\lambda+2\mu)\mathrm{div}u-P(m,n)+P(\tilde{m},\tilde{n}).
\end{equation}
From (\ref{2dbu-w1.9}) and (\ref{2dbu-w1.5}), we have
\begin{equation}\label{2dbu-w1.55}
\Delta
u^j=\partial_j(\frac{F+P(m,n)-P(\tilde{m},\tilde{n})}{\lambda+2\mu})+\partial_i(\omega^{j,i}).
\end{equation}
Finally, we  denote the material derivative $\frac{D}{Dt}$ by
$\frac{D\omega}{Dt}=\dot{\omega}=\omega_t+u\cdot\nabla\omega$ for
function $\omega(x,t)$.

\noindent

The following is the main result of this paper.

\begin{thm}\label{2dbu-T1.1}
For sufficiently small constants $\varepsilon\in(0,1)$,
$\underline{m}_0>0$, $\overline{m}_0>0$, $\underline{n}_0$ and
$\overline{n}_0$, with
$\underline{m}_0\leq\tilde{m}\leq\overline{m}_0$,
$\underline{n}_0\leq\tilde{n}\leq\overline{n}_0$, let the initial
data $(m_0(x),n_0(x),u_0(x))$ satisfy
\begin{equation}\label{2dbu-w1.6}
\left\{
    \begin{array}{l}
 \underline{m}_0\leq\inf\limits_{x}m_0\leq\sup\limits_{x}m_0\leq\overline{m}_0, \\
 \underline{n}_0\leq\inf\limits_{x}n_0\leq\sup\limits_{x}n_0\leq\overline{n}_0, \\
 0<E_0\leq\varepsilon, \\
 m_0-\tilde{m}, n_0-\tilde{n}, u_0\in H^3.
      \end{array}
    \right.
\end{equation}
Define
\begin{equation}
\underline{s}_0=\inf\limits_{x}\frac{n_0}{m_0}, \ \
\overline{s}_0=\sup\limits_{x}\frac{n_0}{m_0}.
\end{equation}
Furthermore, assume that
\begin{equation}\label{2dbu-w1.7}
\overline{s}_0=\frac{\tilde{n}}{\tilde{m}}.
\end{equation}
Then there exist constants $\underline{m}$, $\overline{m}$, with
$\underline{m}<\underline{m}_0<\overline{m}_0<\overline{m}$ and
$\overline{m}>\frac{\tilde{n}}{\underline{s}_0}$, such that the
problem (1.1)-(1.3) has a unique global classical solution
$(m,n,u)(x,t)$ satisfying
\begin{equation}\label{2dbu-w1.6}
\left\{
    \begin{array}{l}
     0<\underline{m}\leq m(x,t)\leq\overline{m}, \\
     \underline{s}_0\underline{m}\leq n(x,t)\leq\overline{s}_0\overline{m},\\
   (m-\tilde{m}, n-\tilde{n}, u)\in C^1(\mathbb{R}^3\times(0,T])\cap C([0,T],H^3)\cap C^1((0,T],H^2),
      \end{array}
    \right.
\end{equation}
furthermore, we have
\begin{equation}\label{2dbu-w1.7}
\sup\limits_{t\in[0,T]}\|(m-\tilde{m},n-\tilde{n},u)\|_{H^3}+\sup\limits_{t\in[0,T]}\|(m_t,n_t)\|_{H^2}+\int_0^T\|u\|_{H^4}^2dt\leq
C(T),\ \ \ \forall  T>0.
\end{equation}
\begin{equation}\label{2dbu-w1.8}
\sup\limits_{t\in(\tau,T]}\|u_t\|_{H^2}\leq C(\tau, T),\ \ \ \forall \tau>0, \  T>0.
\end{equation}
\end{thm}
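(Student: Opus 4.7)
The plan is to combine a standard local-existence result with a continuity/bootstrap argument, following the scheme of \cite{Z} adapted to the two-phase system and drawing on the Hoff-type machinery of \cite{Hoff1995,Hoff2005,Z.F}. Local existence and uniqueness of a classical solution on some interval $[0,T_*]$ follows by a routine linearization and fixed-point argument in the present $H^3$ class (essentially as in \cite{Y.Z.Z}); the substance of the theorem lies in deriving a priori estimates on $[0,T]$ that are independent of $T$ and strictly improve the hypotheses of a suitable bootstrap, so that the local solution extends to $T=\infty$.

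I would organise the continuity argument around two bootstrap assumptions: a time-uniform pointwise bound $\underline m\le m(x,t)\le\overline m$ with $\underline m<\underline m_0$, $\overline m>\overline m_0$ chosen in advance, and a Hoff-type bound
\begin{equation*}
\sup_{0\le t\le T}\|\nabla u(t)\|_{L^2}^2+\int_0^T\!\!\int m|\dot u|^2\,dx\,dt\le C M,
\end{equation*}
with $C$ independent of $T$. The crucial structural reduction from the two-phase to an effectively single-phase analysis is that $s:=n/m$ obeys the pure transport equation $s_t+u\cdot\nabla s=0$ (subtracting the two continuity equations in \eqref{2dbu-E1.1}); combined with the hypothesis $\overline{s}_0=\tilde n/\tilde m$ this yields the preserved bound $\underline{s}_0\le s(x,t)\le\overline{s}_0$ for all $t\ge 0$. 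Writing $n=sm$, the pressure $m\mapsto P(m,sm)$ from \eqref{2dbu-E1.5} is then smooth, strictly increasing in $m$ for fixed $s>0$, and the potential $G$ of \eqref{2dbu-w1.1} is coercive in $(m-\tilde m,n-\tilde n)$ on $[\underline m,\overline m]\times[\underline s_0\underline m,\overline s_0\overline m]$.

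The a priori estimates follow the Hoff--Zhang scheme in three layers. First, the basic energy identity controls $\tfrac12\int m|u|^2\,dx+\int G\,dx+\int_0^T\!\int(\mu|\nabla u|^2+(\mu+\lambda)(\mathrm{div}\,u)^2)\,dx\,dt\le E_0$. Second, multiplying the momentum equation by $\dot u$ and invoking the elliptic identities $\Delta F=\mathrm{div}(m\dot u)$ and $\mu\Delta\omega^{j,k}=\partial_k(m\dot u)^j-\partial_j(m\dot u)^k$, both consequences of \eqref{2dbu-w1.55}, produces $L^p$ bounds on $\nabla u$ in terms of $\|m\dot u\|_{L^2}$; the $\|\nabla u\|_{L^2}^2$ bootstrap is then closed by absorbing the remainders through $E_0\le\varepsilon$. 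Third, the pointwise bound on $m$ is obtained by rewriting the continuity equation along particle paths as $(d/dt)\log m=-\mathrm{div}\,u$, substituting $\mathrm{div}\,u=(F+P(m,n)-P(\tilde m,\tilde n))/(\lambda+2\mu)$ from \eqref{2dbu-w1.5}, and running a Zlotnik-type ODE argument whose smallness hypothesis is guaranteed by $E_0\le\varepsilon$ together with the Hoff estimates; the gap $\overline m-\overline m_0$ absorbs the initial-layer growth, and a symmetric argument delivers $\underline m$.

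Once both bootstrap assumptions are strictly improved, the continuity method extends the solution globally in time. The higher-order estimates \eqref{2dbu-w1.7} on $(m-\tilde m,n-\tilde n,u)$ in $H^3$, on $(m_t,n_t)$ in $H^2$ and the integral bound on $\|u\|_{H^4}^2$ follow inductively by differentiating the system and running standard energy estimates together with Gagliardo--Nirenberg interpolation, while \eqref{2dbu-w1.8} is obtained by parabolic smoothing of $u_t$ on $(\tau,T]$. Uniqueness reduces to a direct $L^2$ contraction estimate on the difference of two solutions. The \emph{main obstacle} is the pointwise bound on $m$: unlike the $\gamma$-law case treated in \cite{Z}, the pressure in \eqref{2dbu-E1.5} is defined implicitly through a square root, so I expect real work in verifying that $m\mapsto P(m,sm)-P(\tilde m,\tilde n)$ retains the monotonicity and sign properties required by the Zlotnik argument \emph{uniformly} in $s\in[\underline s_0,\overline s_0]$, and that the resulting constants keep the $\varepsilon$-smallness chain from unravelling when one passes back into the coercivity estimates for $G$.
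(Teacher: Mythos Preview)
Your proposal is essentially correct and follows the same overall architecture as the paper: local existence plus a continuity/bootstrap argument built on Hoff--Zhang machinery, with the reduction $s=n/m$ transported (the paper's Lemma~\ref{2dbu-a1.0}), the effective viscous flux decomposition \eqref{2dbu-w1.5}--\eqref{2dbu-c1.6}, and a Zlotnik-type ODE for the pointwise density bound.

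There are, however, two technical points where the paper's execution differs from your sketch and which you should be aware of. First, the bootstrap hypothesis in the paper is \emph{not} the unweighted bound $\sup_t\|\nabla u\|_{L^2}^2+\int\!\!\int m|\dot u|^2\le CM$ that you propose, but rather a pair of \emph{time-weighted} quantities $A_1(T)+A_2(T)\le 2E_0^\theta$ involving $\sigma=\min\{1,t\}$ (see \eqref{2dbu-c1.3}); the strict improvement is to $E_0^\theta$. Your unweighted hypothesis is recovered as a \emph{consequence} (Lemmas~\ref{2dbu-a1.4} and~\ref{2dbu-a1.6}) rather than used as the bootstrap itself, and it is not obvious how one would strictly improve a hypothesis of the form $\le CM$. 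Second, the step you correctly flag as the main obstacle---controlling $\int_0^T\|F\|_{L^\infty}\,dt$ for the Zlotnik argument---is handled in the paper via an additional $(2+q)$-moment estimate on $\dot u$ (Lemma~\ref{2dbu-a1.6}, equation \eqref{2dbu-b1.18}) with $q\in(1,\tfrac43)$ satisfying $q^2<4\mu/(\lambda+\mu)$; this is precisely where the structural restriction $\lambda<3\mu$ of \eqref{2dbu-w1.4} enters, and it does not fall out of the plain $L^2$ Hoff estimates you list. For the higher-order bounds the paper also introduces the Sun--Wang--Zhang decomposition $u=v+w$ with $v$ solving \eqref{2dbu-E3.10}, together with a $BMO$-logarithmic inequality, to propagate $\nabla m,\nabla n$ in $L^{q_1}$ before $L^2$; your ``differentiate and Gronwall'' description is correct in spirit but this intermediate step is what makes it close.
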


\begin{rem}\label{2dbu-L1.2}
It is easy to verify
 \begin{equation}
\left\{
\begin{array}{l}\label{2dbu-E1.13}
P_m=\frac{\partial P}{\partial m}=C^0\left\{1-\frac{b}{\sqrt{b^2+c}}\right\}>0, \\
 P_n=\frac{\partial P}{\partial n}=C^0\left\{a_0+\frac{a_0}{\sqrt{b^2+c}}(m+a_0n+k_0)\right\}>0,
\ \ m, \ n
>0.\end{array}\right.
 \end{equation}
 This shows that  $P(m, n)$ is increasing in $m$ and $n$ for $m, \ n>0.$
 \end{rem}
 \begin{rem}\label{remark 1.2}
It should be mentioned that the existence of global strong solutions
for 3D with vacuum was obtained recently by Guo, Yang and Yao,
please see \cite{Guo-Yang-Yao}, where the initial energy was assumed
to be small enough and the solutions satisfied \beqno
&0\le\underline{s}_0m\le n\le\overline{s}_0m,\ (m-\tilde{m},
n-\tilde{n})\in C([0,T]; W^{1,q_0}\cap H^1),&\\& u\in
C([0,T];D_0^1\cap D^2)\cap L^2(0,T;D^{2,q_0}),\ u_t\in
L^2(0,T;D_0^1),\ \sqrt{m}u_t\in L^\infty(0,T; L^2),& \eeqno for some
$q_0\in(3,6]$. It seems impossible to consider the existence of
classical solutions under the assumptions of \cite{Guo-Yang-Yao},
since higher order derivatives of the pressure function are
unbounded on $\{(m,n)|m=k_0, n=0\}$, such as
\begin{equation}\label{Guo-yang-yao solution}
\partial_n^2
P(m,n)=\frac{-4C^0a_0^2k_0m}{\Big[(k_0-m-a_0n)^2+4k_0a_0n\Big]^\frac{3}{2}}=\infty,\
\mathrm{on}\ \{(m,n)|m=k_0, n=0\}.
\end{equation} It seems that the assumption $\inf n_0>0$ and $\inf
m_0\ge0$ is enough. While, for simplicity, we assume that both $n_0$
and $m_0$ are positive in Theorem \ref{2dbu-T1.1}. In this case, the
compatibility condition like (1.16) in \cite{Guo-Yang-Yao} is not
necessary.

 \end{rem}

\section{The proof of Theorem \ref{2dbu-T1.1}}\label{2dbu-S6}
The local existence of the solutions to
(\ref{2dbu-E1.1})-(\ref{2dbu-E1.3}) with the regularities as in
Theorem \ref{2dbu-T1.1} can be obtained by the similar methods as in
\cite{C}, \cite{W-Zhu2} and the references therein. We omit it here
for brevity. The regularities guarantee the uniqueness (refer for
instance to \cite{Hoff1995}). Let $[0, T^*)$ be the maximal
existence interval of the above solutions. Note that the local
existence of the solutions guarantees $T^*>0$. Our goal is to prove
$T^*=\infty$ by using a contradiction argument. More precisely,
suppose $T^*<\infty$, our aim is to get
\begin{equation}\label{aim1}
\begin{array}{l}
\sup\limits_{t\in[0,T]}\|(m-\tilde{m}, n-\tilde{n}, u)\|_{H^3}\le K,
\end{array}
\end{equation} and
\begin{equation}\label{aim2}
\begin{array}{l}
\inf\limits_{(x,t)\in\mathbb{R}^3\times[0,T]}m\ge \frac{1}{K}, \inf\limits_{(x,t)\in\mathbb{R}^3\times[0,T]}n\ge \frac{1}{K},
\end{array}
\end{equation}
for any $T\in(0,T^*)$, where $K$ is a generic positive constant depending only on $T^*$ and other known constants but independent of $T$. With (\ref{aim1}) and (\ref{aim2}),
 $T^*$ is not the maximal existence time of the solutions, which is the desired contradiction.

The proof is divided into two steps. It should be pointed that the
$H^1\times H^2$-estimates of $((m,n), u)$ could be obtained by the
same arguments as in \cite{Guo-Yang-Yao}. For completeness, we still
present some of the crucial estimates which might be slightly
different from those as in \cite{Guo-Yang-Yao} with $m$ and $n$
positive lower bounds. For the higher order estimates of $(m,n,u)$,
we shall apply some ideas which were used to handle the 3D
single-phase Navier-Stokes equations, see for instance \cite{Z}.
More precisely,
we proceed as follows.\\

 {\noindent\bf Step 1:} The bounds of the density.

 {\bf Claim:} There exist $\varepsilon\in(0,1)$ sufficiently small and $\underline{m}\in(0,\underline{m}_0)$, and $\overline{m}\in(\overline{m}_0,\infty)$, and
$\overline{m}>\frac{\tilde{n}}{\underline{s}_0}$, such that for any given $T\in(0,T^*)$, the following estimates hold:
\begin{equation}\label{2dbu-c1.1}
\underline{m}\leq m(x,t)\leq \overline{m},\ (x,t)\in \mathbb{R}^3\times[0,T],
\end{equation}
\begin{equation}\label{2dbu-c1.2}
A_1(T)+A_2(T)\leq2E_0^\theta,
\end{equation}
for some $\theta\in(0,1)$, provided the initial energy $E_0\le\varepsilon$. Here we have used the following two notations:
\begin{equation}\label{2dbu-c1.3}
 \begin{array}[b]{l}
A_1(T)=\D\sup\limits_{t\in[0,T]}\sigma\int|\nabla u|^2dx+\int_0^T\int\sigma|\dot{u}|^2dxdt,\vspace{.2cm} \\
A_2(T)=\D\sup\limits_{t\in[0,T]}\sigma^{{3}}\int|\dot{u}|^2dx+\int_0^T\int\sigma^{{3}}|\nabla\dot{u}|^2dxdt,
\end{array}
\end{equation} where $\sigma=\sigma(t)=\min\{1,t\}$.

Define
$$
T_0=\sup\left\{S\in[0,T]\Big| \ \underline{m}\leq m(x,t)\leq \overline{m},\ (x,t)\in \mathbb{R}^3\times[0,S],\ \mathrm{and}\ A_1(S)+A_2(S)\leq2E_0^\theta\right\}.
$$
To get (\ref{2dbu-c1.1}) and (\ref{2dbu-c1.2}), it suffices to prove $T_0=T$.

Since $\underline{m}<m_0(x)<\overline{m}$ and $A_1(0)=A_2(0)=0$, we
get $T_0>0$ by using the continuity of $m$, $A_1(t)$ and $A_2(t)$
with respect to $t$ over $[0,T]$.

To get $T_0=T$, it suffices to prove
\begin{equation*}
\underline{m}<\underline{m}_1\leq m(x,t)\leq \overline{m}_1<
\overline{m},\ \mathrm{for}\ \mathrm{some}\ \mathrm{constants}\ \underline{m}_1\ \mathrm{and}\  \overline{m}_1,
\end{equation*}
and
\begin{equation*}
A_1(t)+A_2(t)\leq E_0^\theta,\ \ \ \forall t\in[0,T_0],
\end{equation*} provided the initial energy $E_0\le\varepsilon$, for $\varepsilon>0$ sufficiently small.\\

From the momentum equation (\ref{2dbu-E1.1})$_3$ and
(\ref{2dbu-w1.9}), (\ref{2dbu-w1.5}), we have
\begin{equation}\label{2dbu-c1.4}
m\dot{u}^j=\partial_jF+\mu\partial_k\omega^{j,k},
\end{equation}
which implies
\begin{equation}\label{2dbu-c1.5}
\Delta F=\mathrm{div}(m\dot{u}),
\end{equation}
and
\begin{equation}\label{2dbu-c1.6}
\mu\Delta
\omega^{j,k}=\partial_k(m\dot{u}^{j})-\partial_j(m\dot{u}^{k}).
\end{equation}

This shows that the $L^2$ estimate of $m\dot{u}$ implies $L^2$ bounds of $\nabla F$ and $\nabla\omega$.
 Equations (\ref{2dbu-c1.5}) and (\ref{2dbu-c1.6}) will play important roles in this section.

Throughout the rest of the paper, we denote the generic constant by $C$ depending on the initial data and other known constants,
but independent of $T_0$, $T$ and $T^*$. We omit the integration domain when we integrate
some functions over $\mathbb{R}^3$.

\begin{lem}\label{2dbu-a1.0}
Under the conditions of Theorem 1.1, it holds that
\begin{equation}\label{m,n}\frac{m}{C}\leq n \leq C  m.\end{equation}
\end{lem}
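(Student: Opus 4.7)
The plan is to show that the ratio $s := n/m$ is a pointwise invariant of the flow and then read off the two-sided bound from the initial data. Since on $[0, T_0]$ we already know $m \ge \underline{m} > 0$, the quotient $s = n/m$ is well defined and as smooth as $(m, n)$, and $u$ is Lipschitz in $x$ from the local regularity, so characteristics are well defined.

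First I would combine the two continuity equations $m_t + \mathrm{div}(mu) = 0$ and $n_t + \mathrm{div}(nu) = 0$. A direct computation gives
\begin{equation*}
(n/m)_t = \frac{n_t}{m} - \frac{n\,m_t}{m^2} = \frac{-\mathrm{div}(nu)}{m} + \frac{n\,\mathrm{div}(mu)}{m^2} = -u\cdot\nabla\bigl(n/m\bigr),
\end{equation*}
because the two $\mathrm{div}\,u$ contributions cancel and the remaining transport terms combine. Hence
\begin{equation*}
\frac{D s}{Dt} = s_t + u\cdot\nabla s = 0, \qquad s := n/m .
\end{equation*}

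Next I would integrate this pure transport equation along the characteristic flow $X(t; x_0)$ defined by $\dot X = u(X, t)$, $X(0) = x_0$. Since $u$ is $C^1$ in $x$ on the time interval we are working with, $X(\cdot; \cdot)$ is a diffeomorphism of $\mathbb R^3$ for each $t$, and $s(X(t; x_0), t) = s(x_0, 0) = n_0(x_0)/m_0(x_0)$. Therefore for every $(x,t)$ we have
\begin{equation*}
\underline{s}_0 \;\le\; s(x,t) \;=\; \frac{n(x,t)}{m(x,t)} \;\le\; \overline{s}_0 ,
\end{equation*}
where by the assumptions $\underline{m}_0 \le m_0 \le \overline{m}_0$ and $\underline{n}_0 \le n_0 \le \overline{n}_0$ one has
\begin{equation*}
0 \;<\; \underline{n}_0/\overline{m}_0 \;\le\; \underline{s}_0, \qquad \overline{s}_0 \;\le\; \overline{n}_0/\underline{m}_0 \;<\; \infty .
\end{equation*}
Choosing $C := \max\{\,\overline{s}_0,\; 1/\underline{s}_0\,\}$ yields $m/C \le n \le Cm$ as claimed.

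There is no real obstacle here: the only substantive step is the cancellation that turns the evolution of $s = n/m$ into a pure transport equation, and that is a one-line calculation using both continuity equations. The far-field data are consistent with this (indeed $\tilde n/\tilde m \in [\underline{s}_0, \overline{s}_0]$ by \eqref{2dbu-w1.7}), and the positive lower bound $m \ge \underline{m}$ on $[0, T_0]$ is exactly what we need to make the division legitimate. The conclusion is uniform in $t \in [0, T_0]$ and the constant $C$ depends only on the initial data.
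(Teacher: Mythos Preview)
Your argument is correct and is precisely the standard one: the paper does not write out a proof here but simply cites \cite{Y.Z.Z}, and the proof there is exactly the observation that $s=n/m$ satisfies the pure transport equation $s_t+u\cdot\nabla s=0$, so $s$ stays in $[\underline{s}_0,\overline{s}_0]$. One small remark: the conclusion actually holds on the whole local existence interval (wherever $m>0$ and the solution has the stated regularity), not just on $[0,T_0]$; your restriction to $[0,T_0]$ is harmless but unnecessary.
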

\begin{proof}
The proof of Lemma \ref{2dbu-a1.0} can be found by Yao-Zhang-Zhu in
\cite{Y.Z.Z}.

\end{proof}

\begin{lem}\label{2dbu-a1.1}
Under the conditions of Theorem 1.1, it holds that
\begin{equation}\label{2dbu-q1.1}
\sup\limits_{t\in[0,T_0]}\int\left(|u|^2+(m-\tilde{m})^2+(n-\tilde{n})^2\right)dx+\int_0^{T_0}\int|\nabla
u|^2dxdt\leq CE_0.
\end{equation}
\end{lem}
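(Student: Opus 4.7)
The plan is to prove the basic energy identity by combining the kinetic energy dissipation law with a transport equation for the potential $G$, and then convert the resulting inequality into the stated bound using the lower bound for $m$ (from the Claim) and the coercivity of $G$.

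First, I would multiply the momentum equation (\ref{2dbu-E1.1})$_3$ by $u$ and integrate over $\mathbb{R}^3$. Using the mass conservation $m_t+\mathrm{div}(mu)=0$ together with the algebraic identity
\[
u\cdot(mu)_t+u\cdot\mathrm{div}(mu\otimes u)=\partial_t\!\left(\tfrac{1}{2}m|u|^2\right)+\mathrm{div}\!\left(\tfrac{1}{2}mu|u|^2\right),
\]
and integrating by parts on the pressure and viscous terms (using $u\to 0$ as $|x|\to\infty$ to drop boundary contributions, and noting that the constant $P(\tilde m,\tilde n)$ integrates against $\mathrm{div}\,u$ to zero), I obtain
\[
\frac{d}{dt}\int\tfrac{1}{2}m|u|^2\,dx+\int\!\left[\mu|\nabla u|^2+(\mu+\lambda)(\mathrm{div}\,u)^2\right]dx=\int\bigl(P(m,n)-P(\tilde m,\tilde n)\bigr)\mathrm{div}\,u\,dx.
\]

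Next I would derive the transport identity for $G$. Writing $s=n/m$, the two continuity equations yield $Ds/Dt=0$, so any smooth $G(m,s)$ satisfies $G_t+\mathrm{div}(Gu)=(G-mG_m)\mathrm{div}\,u$. A direct differentiation of the explicit formula (\ref{2dbu-w1.1}) gives $mG_m-G=P(m,sm)-P(\tilde m,\tilde n)$, hence
\[
G_t+\mathrm{div}(Gu)+\bigl(P(m,n)-P(\tilde m,\tilde n)\bigr)\mathrm{div}\,u=0.
\]
Integrating in $x$ and adding to the kinetic identity, the pressure terms cancel and I get
\[
\frac{d}{dt}\int\!\left[\tfrac{1}{2}m|u|^2+G\right]dx+\int\!\left[\mu|\nabla u|^2+(\mu+\lambda)(\mathrm{div}\,u)^2\right]dx=0.
\]
Integrating from $0$ to $t\in[0,T_0]$ yields an exact energy equality with right-hand side equal to $E_0$.

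Finally, to extract (\ref{2dbu-q1.1}) I would use the Claim's bounds $\underline m\le m\le\overline m$ and Lemma \ref{2dbu-a1.0} (so that $n$ is likewise bounded above and away from zero on $[0,T_0]$). The lower bound $m\ge\underline m$ gives $\int|u|^2\,dx\le\underline m^{-1}\!\int m|u|^2\,dx$. For the density terms, I need the coercivity
\[
G(m,n/m)\ge c\bigl[(m-\tilde m)^2+(n-\tilde n)^2\bigr]
\]
for $(m,n)$ in the compact box provided by the Claim and Lemma \ref{2dbu-a1.0}. This follows from a Taylor expansion: the explicit formula shows $G(\tilde m,\tilde n/\tilde m)=0$ and $\nabla_{(m,n)}G(\tilde m,\tilde n)=0$, while the Hessian of $G$ at $(\tilde m,\tilde n)$ is positive definite thanks to $P_m,P_n>0$ from Remark \ref{2dbu-L1.2}; a uniform quadratic lower bound away from $(\tilde m,\tilde n)$ is obtained from continuity and the compactness of the range of $(m,n)$. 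The main obstacle here is this coercivity step, because $G$ is written in the $(m,s)$ variables rather than $(m,n)$, so one must track that the quadratic form in $(m-\tilde m,s-\tilde n/\tilde m)$ is equivalent to one in $(m-\tilde m,n-\tilde n)$, which is valid precisely because $m$ stays bounded away from $0$. Combining all pieces yields (\ref{2dbu-q1.1}).
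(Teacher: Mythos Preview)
Your derivation of the energy identity is correct and is exactly the paper's approach (the paper's own proof is the one--line sketch ``differentiate $A(t)=\int(\tfrac12 m|u|^2+G)\,dx$, integrate by parts, use $(1.1)$''). The gap is in your coercivity step. The claim $\nabla_{(m,n)}G(\tilde m,\tilde n)=0$ is false: differentiating (\ref{2dbu-w1.1}) in $n$ and evaluating at $(\tilde m,\tilde n)$ gives
\[
\partial_n G\big|_{(\tilde m,\tilde n)}=\int_{\tilde m}^{\tilde m}\frac{P_n(r,sr)}{r}\,dr-P_n(\tilde m,\tilde n)=-P_n(\tilde m,\tilde n)<0,
\]
so $(\tilde m,\tilde n)$ is not a critical point and the Hessian argument cannot be run. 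In fact, taking $m=\tilde m$ and $n=\tilde n+\epsilon$ (so $s>\tilde n/\tilde m$) makes the integral term in (\ref{2dbu-w1.1}) vanish while the remaining term is strictly negative; hence $G$ is not even nonnegative on a full neighbourhood of $(\tilde m,\tilde n)$.

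What actually produces the coercivity is the structural hypothesis (\ref{2dbu-w1.7}), $\overline s_0=\tilde n/\tilde m$. Combined with $D(n/m)/Dt=0$ it forces $s=n/m\le\tilde n/\tilde m$ for all $(x,t)$, and on this one--sided set the term $\tfrac{m}{\tilde m}\bigl(P(\tilde m,\tilde n)-P(\tilde m,s\tilde m)\bigr)$ is nonnegative and, by $P_n>0$, bounded below by $c\,(\tilde s-s)\ge c'(\tilde s-s)^2$ (the last inequality using the boundedness of $s$). Together with the $(m-\tilde m)^2$ control coming from the integral term (along each fixed $s$), this yields $G\ge c\bigl[(m-\tilde m)^2+(n-\tilde n)^2\bigr]$ on the constrained range. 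So the conclusion you want is correct, but the mechanism is a one--sided first--order lower bound made available by (\ref{2dbu-w1.7}), not a positive--definite Hessian at an interior minimum; you should invoke (\ref{2dbu-w1.7}) explicitly at this point.
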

\begin{proof}
Let
\begin{equation}
A(t)=\int\left\{\frac{1}{2}m|u|^2+G\left(m,\frac{n}{m}\right)\right\}dx.
\end{equation}
Differentiating $A(t)$ with respect to $t$, using integration by
parts and the equation $(1.1)$, we get (\ref{2dbu-q1.1}).

\end{proof}

\begin{lem}\label{2dbu-a1.2}
Under the conditions of Theorem 1.1, it holds that
\begin{equation}\label{2dbu-b1.2}
A_1(T_0)\leq CE_0+C\int_0^{T_0}\int\sigma|\nabla u|^3dxdt,
\end{equation}
and
\begin{equation}
\begin{array}{rl}\label{2dbu-b1.3}
A_2(T_0)\leq\D
CE_0+C\int_0^{T_0}\int\sigma|\nabla u|^3dxdt+C\int_0^{T_0}\int\sigma^{{3}}|\nabla u|^4dxdt.
\end{array}
 \end{equation}
\end{lem}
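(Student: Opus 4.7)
The plan is to adapt Hoff's time-weighted energy method: test the momentum equation (and its material-derivative) with $\sigma\dot u$ and $\sigma^3\dot u$ respectively, so that the singular $t=0$ contribution is killed and exactly the two norms making up $A_1$ and $A_2$ are produced. For \eqref{2dbu-b1.2}, I would rewrite \eqref{2dbu-E1.1}$_3$ as $m\dot u=\mu\Delta u+(\mu+\lambda)\nabla\mathrm{div}u-\nabla(P-P(\tilde m,\tilde n))$, multiply by $\dot u$, and integrate over $\mathbb R^3$. The viscous terms, after integration of $\int\Delta u\cdot u_t$ and $\int\nabla\mathrm{div}u\cdot u_t$ by parts and a further integration by parts on the convective pieces $\int\Delta u\cdot(u\cdot\nabla u)$ and $\int\nabla\mathrm{div}u\cdot(u\cdot\nabla u)$, assemble into
$$-\frac{1}{2}\frac{d}{dt}\int\left[\mu|\nabla u|^2+(\mu+\lambda)(\mathrm{div}u)^2\right]dx+O\!\left(\int|\nabla u|^3dx\right).$$
For the pressure term I use $-\int\nabla(P-P(\tilde m,\tilde n))\cdot\dot u\,dx=\int(P-P(\tilde m,\tilde n))\,\mathrm{div}\dot u\,dx$ together with the identity $\dot P=-(mP_m+nP_n)\mathrm{div}u$ extracted from \eqref{2dbu-E1.1}$_{1,2}$, which produces another $\frac{d}{dt}$ contribution plus an $O(|\nabla u|^3)$ remainder. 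Multiplying by $\sigma(t)$, integrating over $[0,T_0]$ with $|\sigma'|\le 1$, and controlling the remaining $L^2$ quantities by $CE_0$ via Lemma \ref{2dbu-a1.1}, the density bounds \eqref{2dbu-c1.1}, and the pressure monotonicity of Remark \ref{2dbu-L1.2} then gives \eqref{2dbu-b1.2}.

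For \eqref{2dbu-b1.3}, I would apply the material-derivative operator $\partial_t+u\cdot\nabla$ to each component of \eqref{2dbu-c1.4}. Using the continuity equation, $(m\dot u^j)_t+\mathrm{div}(m\dot u^j u)=m\ddot u^j$, so that testing against $\sigma^3\dot u^j$ yields $\int\sigma^3m\ddot u\cdot\dot u\,dx=\frac{1}{2}\frac{d}{dt}\int\sigma^3m|\dot u|^2dx-\frac{3}{2}\sigma^2\sigma'\int m|\dot u|^2dx$. Differentiating the right-hand side of \eqref{2dbu-c1.4} by the same operator, integrating by parts, and using the commutator $[\partial_t+u\cdot\nabla,\partial_j]=-(\partial_j u^k)\partial_k$, the principal viscous contribution becomes $-\mu\int\sigma^3|\nabla\dot u|^2dx-(\mu+\lambda)\int\sigma^3(\mathrm{div}\dot u)^2dx$ plus remainders whose schematic form is $\sigma^3|\nabla u|^2|\nabla\dot u|$. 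The pressure commutator produces $\int\sigma^3\dot P\,\mathrm{div}\dot u\,dx$, again handled via $\dot P=-(mP_m+nP_n)\mathrm{div}u$. A Cauchy inequality absorbs the $|\nabla\dot u|$ factors into the coercive viscous term and leaves precisely the $\int\sigma^3|\nabla u|^4dx$ term appearing in \eqref{2dbu-b1.3}; the weight-correction $\sigma^2\sigma'\int m|\dot u|^2$ is then absorbed by \eqref{2dbu-b1.2}, which has just been proven.

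The step I expect to be most delicate is the pressure bookkeeping in the two-phase setting: unlike the single-phase case, here $P=P(m,n)$ depends on two independently transported quantities, so every appearance of $\dot P$ requires simultaneous use of both continuity equations in \eqref{2dbu-E1.1}, and the coefficients $P_m$, $P_n$, $mP_m+nP_n$ must be shown uniformly bounded on the range of $(m,n)$. Lemma \ref{2dbu-a1.0} (which yields $n\sim m$), the bounds \eqref{2dbu-c1.1}, and Remark \ref{2dbu-L1.2} together supply exactly the pointwise control needed to close these estimates without any smallness beyond $E_0\le\varepsilon$; once this is in place the argument reduces to the standard Hoff bookkeeping.
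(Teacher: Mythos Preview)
Your proposal is correct and follows essentially the same route as the paper, which simply defers the details to \cite{Y.Z.Z}; the Hoff-type argument you outline---testing the momentum equation by $\sigma\dot u$ for $A_1$ and applying $\partial_t+\mathrm{div}(u\cdot)$ to \eqref{2dbu-E1.1}$_3$ followed by testing with $\sigma^3\dot u$ for $A_2$---is exactly the computation carried out in that reference (and reproduced without weights in Lemma \ref{2dbu-a1.9} of the present paper). Your identification of the two-phase pressure bookkeeping via $\dot P=-(mP_m+nP_n)\mathrm{div}u$ together with Lemma \ref{2dbu-a1.0} and the bounds \eqref{2dbu-c1.1} is precisely what is needed; one small remark is that the pressure remainder in the $A_1$ step is really of the form $(mP_m+nP_n)(\mathrm{div}u)^2$ and $(P-\tilde P)|\nabla u|^2$ rather than cubic in $\nabla u$, but since these are quadratic in $\nabla u$ with bounded coefficients they go directly into the $CE_0$ term via Lemma \ref{2dbu-a1.1}, so the final inequality is unchanged.
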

\begin{proof}
The estimates (\ref{2dbu-b1.2}) and (\ref{2dbu-b1.3}) can been
obtained by the similar arguments as that  in \cite{Y.Z.Z}.

\end{proof}

 To handle the higher order terms on the right-hand sides of (\ref{2dbu-b1.2}) and (\ref{2dbu-b1.3}), we need the following lemma whose proof can be found in \cite{Y.Z.Z} and references therein.

\begin{lem}\label{2dbu-a1.3}
Under the conditions of Theorem 1.1, it holds that
\begin{equation}\label{2dbu-b1.4}
\|u\|_{L^p}\leq C_p\|u\|_{L^2}^{\frac{6-p}{2p}}\|\nabla
u\|_{L^2}^{\frac{3p-6}{2p}},\ \ \ \ p\in[2,6].
\end{equation}
\begin{equation}\label{2dbu-b1.5}
\|u\|_{L^p}^p\leq C_pE_0^\frac{6-p}{4}\|\nabla
u\|_{L^2}^{\frac{3p-6}{2}},\ \ \ p\in[2,6].
\end{equation}
\begin{equation}\label{2dbu-b1.6}
\|\nabla u\|_{L^r}\leq C_r(\| F\|_{L^r}+\|\omega\|_{L^r}+\|
P(m,n)-P(\tilde{m},\tilde{n})\|_{L^r}),\ \ r\in(1,\infty).
\end{equation}
\begin{equation}\label{2dbu-b1.7}
\|\nabla F\|_{L^r}+\|\nabla\omega\|_{L^r}\leq C_r\|m
\dot{u}\|_{L^r},\ \ r\in(1,\infty).
\end{equation}
\begin{equation}\label{2dbu-b1.8}
\| F\|_{L^p}+\|\omega\|_{L^p}\leq C_p\|m
\dot{u}\|_{L^2}^{\frac{3p-6}{2p}}\left(\|\nabla
u\|_{L^2}^{\frac{6-p}{2p}}+\|
P(m,n)-P(\tilde{m},\tilde{n})\|_{L^2}^{\frac{6-p}{2p}}\right),\ \ \
p\in[2,6].
\end{equation}
Also, for $0\leq t_1\leq t_2 \leq T_0$, $l\geq2$ and  $s\geq0$, we
have
\begin{equation}\label{2dbu-b1.9}
\int_{t_1}^{t_2}\int
\sigma^s|P(m,n)-P(\tilde{m},\tilde{n})|^ldxds\leq
C\left(\int_{t_1}^{t_2}\int \sigma^s|F|^ldxds+E_0\right).
\end{equation}
\end{lem}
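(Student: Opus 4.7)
The plan is to handle the six estimates in order, relying throughout on Lemma \ref{2dbu-a1.0} (so that the positive lower and upper bounds on $m$ force the analogous bounds on $n$) and on Lemma \ref{2dbu-a1.1} to cash in the basic energy bound.

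First, (\ref{2dbu-b1.4}) is just the standard Gagliardo–Nirenberg interpolation in $\mathbb{R}^3$: for $p\in[2,6]$ one has $\|u\|_{L^p}\le C_p\|u\|_{L^2}^{(6-p)/(2p)}\|\nabla u\|_{L^2}^{(3p-6)/(2p)}$, valid for $u\in H^1$ which is guaranteed by the regularity of the solution. Estimate (\ref{2dbu-b1.5}) then follows by raising (\ref{2dbu-b1.4}) to the $p$-th power and inserting $\|u\|_{L^2}^2\le CE_0$, which itself comes from Lemma \ref{2dbu-a1.1} together with $m\ge\underline{m}>0$ on $[0,T_0]$.

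For (\ref{2dbu-b1.6})–(\ref{2dbu-b1.8}), I would use the Helmholtz-type decomposition (\ref{2dbu-w1.55}). Since $u$ decays at infinity, one recovers $\nabla u$ from $\Delta u$ by Riesz operators, so Calderón–Zygmund $L^r$ theory gives
\[
\|\nabla u\|_{L^r}\le C_r\bigl(\|F\|_{L^r}+\|\omega\|_{L^r}+\|P(m,n)-P(\tilde m,\tilde n)\|_{L^r}\bigr),
\]
which is (\ref{2dbu-b1.6}). Next, (\ref{2dbu-c1.5}) and (\ref{2dbu-c1.6}) express $F$ and each component of $\omega$ as inverse Laplacians of first derivatives of $m\dot u$; Calderón–Zygmund again yields (\ref{2dbu-b1.7}). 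For (\ref{2dbu-b1.8}) I would apply Gagliardo–Nirenberg to $F$ and $\omega$ individually,
\[
\|F\|_{L^p}+\|\omega\|_{L^p}\le C_p\bigl(\|F\|_{L^2}+\|\omega\|_{L^2}\bigr)^{(6-p)/(2p)}\bigl(\|\nabla F\|_{L^2}+\|\nabla\omega\|_{L^2}\bigr)^{(3p-6)/(2p)},
\]
then feed in $\|\omega\|_{L^2}\le C\|\nabla u\|_{L^2}$, $\|F\|_{L^2}\le C(\|\nabla u\|_{L^2}+\|P(m,n)-P(\tilde m,\tilde n)\|_{L^2})$ from the definition (\ref{2dbu-w1.5}), and $\|\nabla F\|_{L^2}+\|\nabla\omega\|_{L^2}\le C\|m\dot u\|_{L^2}$ from (\ref{2dbu-b1.7}).

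The main obstacle is (\ref{2dbu-b1.9}), which is not an algebraic interpolation but a dynamic estimate tying $P-\tilde P$ back to the flux $F$. My plan is to derive a transport-type ODE for the pressure: applying $D/Dt$ and using (\ref{2dbu-E1.1})$_1$–(\ref{2dbu-E1.1})$_2$ gives
\[
\frac{D}{Dt}\bigl(P(m,n)-P(\tilde m,\tilde n)\bigr)=-\bigl(mP_m+nP_n\bigr)\,\mathrm{div}\,u,
\]
and then substituting $\mathrm{div}\,u=(F+P-\tilde P)/(\lambda+2\mu)$ from (\ref{2dbu-w1.5}) yields
\[
\frac{D}{Dt}(P-\tilde P)+\frac{mP_m+nP_n}{\lambda+2\mu}(P-\tilde P)=-\frac{mP_m+nP_n}{\lambda+2\mu}F.
\]
Since $mP_m+nP_n$ is strictly positive and bounded on the region where $m$ and $n$ stay in compact subsets of $(0,\infty)$ (using Remark \ref{2dbu-L1.2} and Lemma \ref{2dbu-a1.0}), I would multiply by $\sigma^s|P-\tilde P|^{l-2}(P-\tilde P)$, integrate over $\mathbb{R}^3$ (using the mass equation to rewrite the material derivative as a conservative time derivative modulo a divergence that integrates to zero), and obtain
\[
\frac{d}{dt}\!\int\!\sigma^s|P-\tilde P|^l\,dx+c\!\int\!\sigma^s|P-\tilde P|^l\,dx\le C\!\int\!\sigma^s|F|^l\,dx+Cs\sigma^{s-1}\sigma'\!\int\!|P-\tilde P|^l\,dx.
\]
Integrating in time on $[t_1,t_2]$ and applying Grönwall absorbs the first term on the right-hand side into a clean bound, while the lower-order $\sigma^{s-1}$ term and the initial trace are controlled by the basic energy $E_0$ (recall that $G(m,n/m)$ is equivalent to $|P-\tilde P|^2$ near $(\tilde m,\tilde n)$). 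That delivers (\ref{2dbu-b1.9}). The only real care needed is making sure that the coefficient $(mP_m+nP_n)/(\lambda+2\mu)$ stays bounded above and below on $[0,T_0]$, which is exactly why the uniform bounds on $m$ (and hence $n$) play a decisive role here.
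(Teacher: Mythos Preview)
Your approach is sound and is precisely the standard route that the paper defers to by citing \cite{Y.Z.Z} (the paper supplies no argument of its own for this lemma). The interpolation and Calder\'on--Zygmund steps for (\ref{2dbu-b1.4})--(\ref{2dbu-b1.8}) are exactly right.

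For (\ref{2dbu-b1.9}) one point deserves tightening. The phrase ``using the mass equation to rewrite the material derivative as a conservative time derivative modulo a divergence that integrates to zero'' is only literally true with a density weight; without it one has
\[
\int\frac{D}{Dt}g\,dx=\frac{d}{dt}\int g\,dx-\int(\mathrm{div}\,u)\,g\,dx,
\]
so after multiplying your transport identity by $l\sigma^s|P-\tilde P|^{l-2}(P-\tilde P)$ and integrating, a residual term $\int\sigma^s(\mathrm{div}\,u)|P-\tilde P|^l\,dx$ survives. This is harmless: substituting $\mathrm{div}\,u=(F+P-\tilde P)/(\lambda+2\mu)$ and using the uniform bound $\|P-\tilde P\|_{L^\infty}\le C$ splits it into a piece controlled by Young's inequality against $\int\sigma^s|F|^l$ and a piece absorbed by the damping term (whose coefficient carries the factor $l\ge2$). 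The boundary traces at $t_1,t_2$ and the $\sigma'$ contribution are then bounded by $\|P-\tilde P\|_{L^\infty}^{l-2}\int|P-\tilde P|^2\,dx\le CE_0$ via Lemma~\ref{2dbu-a1.1}. With this small correction your argument goes through.
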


\begin{lem}\label{2dbu-a1.4}
Under the conditions of Theorem 1.1, there exists a constant $T_1>0$, such that
\begin{equation}\label{2dbu-b1.99}
\sup\limits_{t\in[0,T_0]}\int|\nabla u|^2dx+\int_0^{{T_0}\wedge
T_1}\int|\dot{u}|^2dxdt\leq C(1+M),
\end{equation} where we have used the notation $e_1\wedge
e_2=\min\{e_1, e_2\}$.
\end{lem}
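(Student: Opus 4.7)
The plan is to derive an $H^1$-energy estimate by testing the momentum equation $(\ref{2dbu-E1.1})_3$ against $u_t$. Using $(\ref{2dbu-E1.1})_1$ to rewrite $(mu)_t + \mathrm{div}(mu\otimes u) = m\dot u$, integration over $\mathbb{R}^3$ yields
\begin{equation*}
\frac{1}{2}\frac{d}{dt}\int\bigl[\mu|\nabla u|^2 + (\mu+\lambda)(\mathrm{div}\,u)^2\bigr]dx + \int m|u_t|^2\,dx = -\int m(u\cdot\nabla u)\cdot u_t\,dx - \int \nabla P\cdot u_t\,dx.
\end{equation*}
Since $u_t = \dot u - u\cdot\nabla u$, the dissipation $\int m|u_t|^2\,dx$ controls $\tfrac12\int m|\dot u|^2\,dx$ modulo an $\int m|u\cdot\nabla u|^2\,dx$ error, which by $(\ref{2dbu-b1.4})$ and the bootstrap bound $m\leq \overline m$ is dominated by $C\|u\|_{L^6}^2\|\nabla u\|_{L^3}^2$.

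Next I would handle the pressure term by $-\int \nabla P \cdot u_t\,dx = \frac{d}{dt}\int (P-P(\tilde m,\tilde n))\,\mathrm{div}\,u\,dx - \int P_t\,\mathrm{div}\,u\,dx$. From $(\ref{2dbu-E1.1})_{1,2}$ and the chain rule one gets $P_t = -u\cdot\nabla P - (mP_m + nP_n)\,\mathrm{div}\,u$, and after one further integration by parts on the transport term the remaining contributions reduce to combinations of $\int P(\mathrm{div}\,u)^2\,dx$ and $\int (mP_m+nP_n)(\mathrm{div}\,u)^2\,dx$, both bounded using Remark \ref{2dbu-L1.2}, Lemma \ref{2dbu-a1.0}, and the bootstrap bounds $\underline m\leq m\leq \overline m$. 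The convective term is absorbed via Young's inequality as $|\int m(u\cdot\nabla u)\cdot u_t\,dx|\leq \tfrac14\int m|u_t|^2\,dx + C\|u\|_{L^6}^2\|\nabla u\|_{L^3}^2$, with $\|\nabla u\|_{L^3}$ reduced through $(\ref{2dbu-b1.6})$ to $\|F\|_{L^3}+\|\omega\|_{L^3}+\|P-P(\tilde m,\tilde n)\|_{L^3}$, and the first two terms further bounded via $(\ref{2dbu-b1.8})$ in terms of $\|m\dot u\|_{L^2}$, providing a self-improving feedback with the $|\dot u|^2$ dissipation on the left.

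Putting everything together gives a differential inequality
\begin{equation*}
\frac{d}{dt}\Phi(t) + c_0\int|\dot u|^2\,dx \leq C + C\,\Phi(t)^{\beta},\qquad \beta>1,
\end{equation*}
where $\Phi(t)$ is a suitable combination of $\int[\mu|\nabla u|^2+(\mu+\lambda)(\mathrm{div}\,u)^2]\,dx$ with a term proportional to $\int (P-P(\tilde m,\tilde n))\mathrm{div}\,u\,dx$, comparable to $\|\nabla u(t)\|_{L^2}^2$ modulo quantities already controlled by Lemma \ref{2dbu-a1.1}, and $\Phi(0)\leq C(1+M)$. A standard continuation argument applied to the scalar companion ODE $\phi'=C+C\phi^\beta$ then produces a constant $T_1>0$ depending only on $M$ and known quantities (shrinking like $(1+M)^{-(\beta-1)}$) such that $\Phi(t)\leq C(1+M)$ on $[0,T_1]$; integration of the inequality over $[0,T_0\wedge T_1]$ delivers the $\int|\dot u|^2$ bound. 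The sup bound on $[0,T_0]$ is then patched: from the continuation on $[0,T_1]$, and on $[T_1,T_0]$ directly from the bootstrap bound $A_1(T_0)\leq 2E_0^\theta$ combined with $\sigma(t)\geq \sigma(T_1)>0$.

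The main obstacle is the superlinear term $\Phi^\beta$ on the right, which is essentially unavoidable: Lemma \ref{2dbu-a1.1} supplies only an integrated (not uniform in time) $L^2$-bound on $\nabla u$, so the convective contribution $\|u\|_{L^6}^2\|\nabla u\|_{L^3}^2$ cannot be dominated by smallness of the initial energy alone. This forces the $|\dot u|^2$ estimate to live on a short interval whose length depends on $M$, while the sup over the longer range $[0,T_0]$ has to be handled by patching with the weighted bootstrap information.
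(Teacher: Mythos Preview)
Your argument is correct and closes, but it takes a slightly different route from the paper's. The paper tests $(\ref{2dbu-E1.1})_3$ directly against $\dot u$ rather than $u_t$; this immediately produces the dissipation $\int m|\dot u|^2$ without the splitting $u_t=\dot u-u\cdot\nabla u$, and the pressure and viscous terms, after integration by parts, leave only the cubic remainder $\int_0^t\!\!\int|\nabla u|^3$. That cubic term is then decomposed through $(\ref{2dbu-b1.6})$, $(\ref{2dbu-b1.8})$ and $(\ref{2dbu-b1.9})$ into a piece controlled by $E_0$ and a piece of the form $(\|\nabla u\|_{L^2}^2+\|P-\tilde P\|_{L^2}^2)^{3/4}\|m\dot u\|_{L^2}^{3/2}$; Young's inequality and a continuity argument in $t$ give $Ct(1+M)^2\sup_{[0,t]}\|\nabla u\|_{L^2}^2$ on the right, which is absorbed by choosing $T_1=\min\{(8C(1+M)^2)^{-1},1\}$. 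Your version, testing against $u_t$ and moving $\int(P-\tilde P)\mathrm{div}\,u$ into the energy $\Phi$, is a perfectly standard alternative (closer to Matsumura--Nishida style) and leads to the same superlinear differential inequality and short-time bound; the paper's choice simply avoids the indefinite pressure term inside $\Phi$ and the $u_t\leftrightarrow\dot u$ conversion. Your patching step on $[T_1,T_0]$ via $A_1(T_0)\le 2E_0^\theta$ and $\sigma(t)\ge\sigma(T_1)$ is exactly what the paper implicitly uses (and in fact makes explicit what the paper's proof leaves tacit for the sup over all of $[0,T_0]$).
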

\begin{proof}
Similar to the proof of (\ref{2dbu-b1.2}), multiplying $(1.1)_3$ by
$\dot{u}$, integrating the resulting equation over
$\mathbb{R}^3\times[0,t]$ ($t\in[0,T_0]$), and using integration by
parts, H\"older inequality, and  Cauchy inequality, we have
\begin{equation*}
\int|\nabla u|^2dx+\int_0^t\int|\dot{u}|^2dxds\leq
C(E_0+M)+C\int_0^t\int|\nabla u|^3dxds,
\end{equation*}
It follows from (\ref{2dbu-b1.6}) and (\ref{2dbu-b1.9}) that
\begin{equation*}
\int_0^t\int|\nabla u|^3dxds\leq
C+C\int_0^t\int(|F|^3+|\omega|^3)dxds.
\end{equation*}
By (\ref{2dbu-b1.8}), we get
\begin{equation}
\begin{array}{rl}\label{2dbu-b1.93}
\D\int(|F|^3+|\omega|^3)dx\leq C\D\left(\int(|\nabla
u|^2+|P(m,n)-P(\tilde{m},\tilde{n})|^2)dx\right)^{\frac{3}{4}}\left(\int
m|\dot{u}|^2dx\right)^{\frac{3}{4}}.
\end{array}
\end{equation}
Thus, from Lemma \ref{2dbu-a1.1} and Young inequality with
$\varepsilon$, we have
\begin{equation*}
\begin{array}{rl}
&\D\int|\nabla u|^2dx+\int_0^t\int|\dot{u}|^2dxds\vspace{.2cm}\\
\leq &\D C(1+M)+C\D\int_0^t\left(\int(|\nabla u|^2+|P(m,n)-P(\tilde{m},\tilde{n})|^2)dx\right)^{\frac{3}{4}}\left(\int m|\dot{u}|^2dx\right)^{\frac{3}{4}}ds\vspace{.2cm} \\
\leq & C(1+M)+C\D\int_0^t\left(\int(|\nabla u|^2+|P(m,n)-P(\tilde{m},\tilde{n})|^2)dx\right)^3ds+\frac{1}{2}\int_0^t\int |\dot{u}|^2dxds\vspace{.2cm} \\
\leq & C\D(1+M)+Ct\sup\limits_{s\in[0,t]}\|\nabla u(\cdot,s)\|_{L^2}^6+\frac{1}{2}\int_0^t\int |\dot{u}|^2dxds\vspace{.2cm} \\
\leq & C\D(1+M)+Ct(1+M)^2\|\nabla
u(\cdot,s)\|_{L^2}^{2}+\frac{1}{2}\int_0^t\int |\dot{u}|^2dxds,
\end{array}
\end{equation*} for $t\in(0,1)\cap(0,T_0)$ sufficiently small, where we have used the continuity of $\int|\nabla u|^2(x,t)\,dx$ with respect to $t$ over $[0,T_0]$.
Taking $T_1=\min\{\frac{1}{8C(1+M)^{2}},1\}$ and letting $t\le T_0\wedge
T_1$, we obtain (\ref{2dbu-b1.99}).
\end{proof}

\begin{lem}\label{2dbu-a1.5}
Under the conditions of Theorem 1.1, for $\varepsilon>0$ sufficiently small, we have
\begin{equation}
 \begin{array}{l}\label{2dbu-b1.10}
\D\sup\limits_{t\in[0,T_0]}\int(\sigma|\nabla
u|^2+\sigma^3|\dot{u}|^2)dx+\int_0^{T_0}\int(\sigma|\dot{u}|^2+\sigma^3|\nabla\dot{u}|^2)dxdt\leq
E_0^\theta.
\end{array}
\end{equation}
\end{lem}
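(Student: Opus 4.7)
The plan is to substitute the interpolation estimates of Lemma~\ref{2dbu-a1.3} into the right-hand sides of (\ref{2dbu-b1.2}) and (\ref{2dbu-b1.3}), and to close the bootstrap by invoking the defining hypothesis of $T_0$, namely $A_1(T_0)+A_2(T_0)\le 2E_0^{\theta}$, together with the basic energy bound of Lemma~\ref{2dbu-a1.1}. The target is to improve the constant $2$ to $1$ under a smallness condition on $\varepsilon$.

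For the cubic integral $\int_0^{T_0}\!\!\int\sigma|\nabla u|^3$ appearing in (\ref{2dbu-b1.2}), I would use (\ref{2dbu-b1.6}) with $r=3$ to dominate $|\nabla u|$ pointwise in time by $|F|+|\omega|+|P-P(\tilde m,\tilde n)|$. The pressure contribution is estimated by interpolating its $L^2$ norm (which is $O(E_0)$ by Lemma~\ref{2dbu-a1.1}) against its $L^\infty$ norm (which is bounded thanks to Step~1's density bounds), giving an $O(E_0)$ error. For the $F$ and $\omega$ parts, (\ref{2dbu-b1.8}) with $p=3$ yields
\[
\int\sigma|\nabla u|^3\,dx\le C\sigma\|m\dot u\|_{L^2}^{3/2}\bigl(\|\nabla u\|_{L^2}^{3/2}+E_0^{3/4}\bigr)+C\sigma E_0.
\]
Integrating in time and applying Young's inequality peels off an absorbable term $\delta\!\int\!\sigma|\dot u|^2\,dxdt$ and leaves a remainder involving $\int_0^{T_0}\sigma\|\nabla u\|_{L^2}^6\,dt$. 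This last term is controlled by combining $\sup_t\sigma\|\nabla u\|_{L^2}^2\le 2E_0^{\theta}$ with $\int_0^{T_0}\|\nabla u\|_{L^2}^2\,dt\le CE_0$, splitting the time interval at $T_0\wedge T_1$ and invoking Lemma~\ref{2dbu-a1.4} near $t=0$ where the $\sigma$-weights are already small.

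The quartic integral $\int_0^{T_0}\!\!\int\sigma^3|\nabla u|^4$ in (\ref{2dbu-b1.3}) is treated in parallel, now with $r=p=4$ in (\ref{2dbu-b1.6}) and (\ref{2dbu-b1.8}). A useful algebraic identity is
\[
\sigma^3\|\dot u\|_{L^2}^3\|\nabla u\|_{L^2}=\bigl(\sigma^{3/2}\|\dot u\|_{L^2}\bigr)\cdot\bigl(\sigma^{1/2}\|\nabla u\|_{L^2}\bigr)\cdot\sigma\|\dot u\|_{L^2}^2,
\]
which lets me extract $(\sigma^3\|\dot u\|_{L^2}^2)^{1/2}\le\sqrt{2E_0^\theta}$ from $A_2$ and $(\sigma\|\nabla u\|_{L^2}^2)^{1/2}\le\sqrt{2E_0^\theta}$ from $A_1$, reducing the product to $2E_0^{\theta}\cdot\sigma\|\dot u\|_{L^2}^2$, which integrates against $A_1(T_0)$. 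Analogous manipulations handle the remaining $E_0^{1/2}$-type pressure residuals produced by (\ref{2dbu-b1.9}).

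Assembling everything one arrives at an inequality of the form
\[
A_1(T_0)+A_2(T_0)\le CE_0+CE_0^{1+\theta_1}+\tfrac{1}{2}\bigl(A_1(T_0)+A_2(T_0)\bigr)
\]
for some $\theta_1>0$, and (\ref{2dbu-b1.10}) follows once $\varepsilon$ is so small that $CE_0+CE_0^{1+\theta_1}\le\tfrac12 E_0^{\theta}$. The main technical difficulty is ensuring that every unweighted factor of $\|\nabla u\|_{L^2}$ or $\|\dot u\|_{L^2}$ is paired with an appropriate power of $\sigma$; otherwise one picks up non-integrable singularities near $t=0$, since the a priori hypothesis only controls $\|\nabla u\|_{L^2}$ and $\|\dot u\|_{L^2}$ like $\sigma^{-1/2}$ and $\sigma^{-3/2}$ respectively, and Lemma~\ref{2dbu-a1.4} serves as the necessary safety net on the short initial interval $[0,T_0\wedge T_1]$.
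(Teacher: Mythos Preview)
Your overall strategy---feeding the interpolation estimates of Lemma~\ref{2dbu-a1.3} back into (\ref{2dbu-b1.2})--(\ref{2dbu-b1.3}) and closing via the bootstrap hypothesis $A_1+A_2\le 2E_0^\theta$---is the same as the paper's, and your treatment of the quartic term $\int\sigma^3|\nabla u|^4$ is essentially correct. However, your handling of the cubic term contains a genuine gap concerning the \emph{time} integration of the pressure contributions.

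You write that interpolating $\|P-P(\tilde m,\tilde n)\|_{L^2}^2=O(E_0)$ against the $L^\infty$ bound gives ``an $O(E_0)$ error.'' That is true only \emph{pointwise in $t$}: one gets $\int|P-\tilde P|^3\,dx\le CE_0$ at each fixed time, so the space--time term $\int_0^{T_0}\!\sigma\!\int|P-\tilde P|^3\,dx\,dt$ is only bounded by $CE_0\!\int_0^{T_0}\sigma\,dt=O(E_0T_0)$, and $T_0$ is not controlled a~priori. The same defect reappears in the residual $E_0^{3/4}\!\int_0^{T_0}\sigma\|m\dot u\|_{L^2}^{3/2}\,dt$: any Young or H\"older splitting in time inevitably leaves a factor of $T_0$. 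Since the whole point of the lemma is a bound uniform in $T_0$, this step fails as written.

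The paper resolves this in two complementary ways. First, the pressure integral is never estimated by direct interpolation; instead (\ref{2dbu-b1.9}) is used, which gives
\[
\int_0^{T_0}\!\!\int\sigma|P-\tilde P|^3\,dx\,dt\le C\Bigl(\int_0^{T_0}\!\!\int\sigma|F|^3\,dx\,dt+E_0\Bigr),
\]
\emph{with no $T_0$ factor}, reducing the pressure term to the already-handled $F$ term. You invoke (\ref{2dbu-b1.9}) for the quartic case but not here, where it is equally essential. Second, for the cubic integral itself the paper splits the \emph{entire} time integral at $T_0\wedge T_1$, not just the $\int\sigma\|\nabla u\|_{L^2}^6$ residual: on $[T_0\wedge T_1,T_0]$ one uses $\sigma|\nabla u|^3\le\tfrac12(\sigma^2|\nabla u|^4+|\nabla u|^2)$ together with $\sigma\ge T_1$ to reduce to $\int\sigma^3|\nabla u|^4+\int|\nabla u|^2$, both of which are already $T_0$-uniform; the direct (\ref{2dbu-b1.8})-based estimate is applied only on the bounded interval $[0,T_0\wedge T_1]$, where the stray time factors are harmless. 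Incorporating either of these corrections makes your argument go through.
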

\begin{proof}
From (\ref{2dbu-b1.2}) and (\ref{2dbu-b1.3}), we have
\begin{equation}\label{2dbu-b1.111}
LHS \ of \ (\ref{2dbu-b1.10})\leq CE_0+C\int_0^{T_0}\int(\sigma|\nabla
u|^3+\sigma^3|\nabla u|^4)dxds.
\end{equation}
By (\ref{2dbu-b1.6}), we have
\begin{equation}\label{2dbu-b1.1}
\int_0^{T_0}\int\sigma^3|\nabla u|^4dxds\leq
C\int_0^{T_0}\int\sigma^3[|F|^4+|\omega|^4+|P(m,n)-P(\tilde{m},\tilde{n})|^4]dxds.
\end{equation}
Using (\ref{2dbu-c1.2}), (\ref{2dbu-q1.1}), (\ref{2dbu-b1.4}) and
(\ref{2dbu-b1.7})-(\ref{2dbu-b1.9}), we have
\begin{equation}
\begin{array}[b]{rl}\label{2dbu-b1.11}
&\D\int_0^{T_0}\int\sigma^3\left(|F|^4+|\omega|^4\right)dxds\vspace{.2cm}\\
\leq & \D C
\int_0^{T_0}\sigma^3\left[\left(\int|F|^2dx\right)^{\frac{1}{2}}\left(\int|\nabla
F|^2dx\right)^{\frac{3}{2}}
+\left(\int|\omega|^2dx\right)^{\frac{1}{2}}\left(\int|\nabla\omega|^2dx\right)^{\frac{3}{2}}\right]ds\vspace{.2cm} \\
\leq & C \D\int_0^{T_0}\sigma^3\left(\|\nabla u\|_{L^2}+\|\nabla
P(m,n)-P(\tilde{m},\tilde{n})\|_{L^2}\right)\|m  \dot{u}\|_{L^2}^3ds\vspace{.2cm} \\
\leq & C\sup\limits_{t\in[0,T_0]}\left\{ \int\sigma(|\nabla u|^2+|P(m,n)-P(\tilde{m},\tilde{n})|^2)dx\int\sigma^3m |\dot{u}|^2ds\right\}^{\frac{1}{2}}\vspace{.2cm} \\
&\ \ \ \D \times\int_0^{T_0}\int\sigma m |\dot{u}|^2dxds\vspace{.2cm} \\
\leq & CE_0^{2\theta}.
\end{array}
\end{equation}
From (\ref{2dbu-b1.11}), we have
\begin{equation}
 \begin{array}[b]{ll}\label{2dbu-b1.12}
\D\int_0^{T_0}\int\sigma^3|P(m,n)-P(\tilde{m},\tilde{n})|^4dxds&\leq
C\left(\D\int_0^{T_0}\int\sigma^3|F|^4dxds+E_0\right)
\vspace{.2cm}\\
&\leq  CE_0^{2\theta}+CE_0.
\end{array}
\end{equation}
From (\ref{2dbu-b1.1})-(\ref{2dbu-b1.12}), we have
\begin{equation}\label{2dbu-b1.13}
\int_0^{T_0}\int\sigma^3|\nabla u|^4\leq  CE_0^{2\theta}+CE_0.
\end{equation}
From (\ref{2dbu-b1.13}), (\ref{2dbu-q1.1}), we get
\begin{equation}
 \begin{array}[b]{ll}\label{2dbu-b1.14}
\D\int_{{T_0}\wedge T_1}^{{T_0}}\int\sigma|\nabla u|^3dxds& \leq
\frac{1}{2}\int_{{T_0}\wedge T_1}^{{T_0}}\int(\sigma^2|\nabla
u|^4+|\nabla u|^2)dxds \vspace{.2cm} \\& \leq
C(T_1)\int_{{T_0}\wedge T_1}^{{T_0}}\int(\sigma^3|\nabla
u|^4+|\nabla u|^2)dxds \vspace{.2cm} \\ &\leq
C(M)E_0^{2\theta}+C(M)E_0,
\end{array}
\end{equation}
where we have used Lemma \ref{2dbu-a1.4}.

From (\ref{2dbu-c1.2}), (\ref{2dbu-q1.1}), (\ref{2dbu-b1.6}),
(\ref{2dbu-b1.8}), (\ref{2dbu-b1.9}) and Lemma \ref{2dbu-a1.4}, we
have
\begin{equation}
\begin{array}[b]{rl}\label{2dbu-b1.15}
&\D\int_0^{{T_0}\wedge T_1}\int\sigma|\nabla u|^3dxds\vspace{.2cm} \\
\leq & \D CE_0+\int_0^{{T_0}\wedge
T_1}\int\sigma(|F|^3+|\omega|^3)dxds
\vspace{.2cm} \\
\leq & CE_0+\int_0^{{T_0}\wedge T_1}\sigma\left(\int(|\nabla
u|^2+|P(m,n)-P(\tilde{m},\tilde{n})|^2)dx\right)^{\frac{3}{4}}\left(\int
m|\dot{u}|^2dx\right)^{\frac{3}{4}}ds \vspace{.2cm} \\ \leq
&C(M)E_0+ C\left\{\sup\limits_{t\in[0,T_0]}\sigma\|\nabla
u\|_{L^2}^4\int_0^{{T_0}\wedge T_1}\|\nabla
u\|_{L^2}^2ds+\int_0^{{T_0}\wedge
T_1}\sigma\|P(m,n)-P(\tilde{m},\tilde{n})\|_{L^2}^6dt\right\}^{\frac{1}{4}}\vspace{.2cm} \\
&\ \ \ \D\times \left(\int_0^{{T_0}\wedge
T_1}\sigma\|m\dot{u}\|_{L^2}^2ds \right)^{\frac{3}{4}}
\vspace{.2cm} \\
\leq &
C(M)E_0+C(M)E_0^{\frac{1}{4}+\theta}+C(M)E_0^{\frac{3}{4}(1+\theta)}.
\end{array}
\end{equation}
Then, from (\ref{2dbu-c1.2}), (\ref{2dbu-b1.111}), (\ref{2dbu-b1.13})-(\ref{2dbu-b1.15}), we obtain
\begin{equation}
LHS \ of \ (\ref{2dbu-b1.10})\leq
C(M)E_0^{1\wedge2\theta\wedge\frac{3}{4}(1+\theta)\wedge(\frac{1}{4}+\theta)}.
\end{equation}
Thus, when $\varepsilon$ is sufficiently small such that $
C(M)\varepsilon^{(1-\theta)\wedge\theta\wedge(\frac{3}{4}-\frac{1}{4}\theta)\wedge\frac{1}{4}}\leq1
$, we can get
\begin{equation}
LHS \ of \ (\ref{2dbu-b1.10})\leq E_0^{\theta}.
\end{equation}

This completes the proof of Lemma \ref{2dbu-a1.5}.
\end{proof}

From Lemma \ref{2dbu-a1.4},  Lemma \ref{2dbu-a1.5} and  Ref.
\cite{Z} (Propositions 3-5), we can get the next lemma.

\begin{lem}\label{2dbu-a1.6}
Under the conditions of Theorem 1.1, it holds
\begin{equation}\label{2dbu-b1.16}
\sup\limits_{t\in[0,T_0]}\int|\nabla
u|^2dx+\int_0^{T_0}\int|\dot{u}|^2dxdt\leq C(M).
\end{equation}
 If we assume further
that there exists $q\in(1,\frac{4}{3})
 $ satisfying $q^2<\frac{4\mu}{\lambda+\mu}$,
then we have
\begin{equation}\label{2dbu-b1.18}
\sup\limits_{t\in[0,T_0]}\sigma^{p_1}\int|\dot{u}|^{2+q}dx+\int_0^{T_0}\int\sigma^{p_1}|\dot{u}|^q|\nabla\dot{u}|^2dxdt\leq
C(M),\ \ \ p_1= 1+\frac{5q}{4}
  .
\end{equation}
\end{lem}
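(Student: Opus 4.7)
The plan is to treat the two inequalities in sequence, combining the short-time bound already available from Lemma \ref{2dbu-a1.4} with the small-energy bounds of Lemma \ref{2dbu-a1.5}, and then adapting the multiplier argument of Zhang \cite{Z} to obtain weighted $L^{2+q}$ control of $\dot u$.

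For (\ref{2dbu-b1.16}), I would split the interval $[0,T_0]$ at $T_1$. On $[0,T_0\wedge T_1]$ the bound is exactly Lemma \ref{2dbu-a1.4}. For $t\in[T_0\wedge T_1,T_0]$ the weight $\sigma(t)\equiv 1$, so Lemma \ref{2dbu-a1.5} gives $\int|\nabla u|^2\,dx\le E_0^\theta\le C$ and $\int_{T_0\wedge T_1}^{T_0}\int|\dot u|^2\,dx\,dt\le E_0^\theta$. Concatenating these two ranges yields (\ref{2dbu-b1.16}) with a constant depending only on $M$ (through $T_1=\min\{1/(8C(1+M)^2),1\}$).

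The substantive part is (\ref{2dbu-b1.18}). I would first derive the evolution equation for $\dot u$ by applying $\partial_t+u\cdot\nabla$ to $(\ref{2dbu-E1.1})_3$, obtaining
\begin{equation*}
m\ddot u^{\,j}=\mu\Delta\dot u^{\,j}+(\mu+\lambda)\partial_j\mathrm{div}\dot u-\partial_j\dot P-\mu\partial_k(\partial_k u^i\partial_i u^j)+\partial_k u^j\partial_k P-(\mu+\lambda)\partial_j(\partial_k u^i\partial_i u^k)+(\mu+\lambda)\partial_k u^k\,\partial_j\mathrm{div}u,
\end{equation*}
where $\dot P=-(\gamma\text{-like terms})\,\mathrm{div}u$ coming from the mass equations (the explicit form uses $P_m m+P_n n$ against $\mathrm{div}u$). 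Next, I would multiply by $\sigma^{p_1}|\dot u|^q\dot u$ and integrate over $\mathbb{R}^3\times[0,t]$ with $t\le T_0$. The left-hand side produces the desired $\sup_t\sigma^{p_1}\int m|\dot u|^{2+q}$ together with the diffusion term. Expanding the diffusion via integration by parts yields, modulo lower-order commutators,
\begin{equation*}
\mu\int|\dot u|^q|\nabla\dot u|^2\,dx+(\mu+\lambda)\int|\dot u|^q|\mathrm{div}\dot u|^2\,dx-\frac{q(\mu+\lambda)}{\text{something}}\int|\dot u|^{q-2}\bigl(\dot u\cdot\nabla\bigr)\cdot(\cdots)\,dx,
\end{equation*}
and the key algebraic point is that the condition $q^2<\frac{4\mu}{\mu+\lambda}$ (part of (\ref{2dbu-w1.4})) is exactly what makes this quadratic form in $\nabla\dot u$ coercive, so one recovers a clean lower bound $c_q\int|\dot u|^q|\nabla\dot u|^2$.

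The remaining task is to bound the forcing terms: each contains either $\nabla u$ times $\nabla\dot u$ (handled by Young's inequality against the coercive piece with $\varepsilon$-small absorption) or $\nabla u\cdot\nabla P$ times $|\dot u|^q\dot u$ (handled by Hölder/Sobolev and the pressure bounds of Lemma \ref{2dbu-a1.3}). The $\sigma^{p_1}$ weight with $p_1=1+\frac{5q}{4}$ is chosen precisely so that the $\sigma^{p_1-1}\dot\sigma\int|\dot u|^{2+q}\,dx$ term produced by differentiating $\sigma^{p_1}$ in time can be controlled by interpolation: writing $\|\dot u\|_{L^{2+q}}\le C\|\dot u\|_{L^2}^{\alpha}\|\nabla\dot u\|_{L^2}^{1-\alpha}$ and using $\int|\dot u|^2$ from (\ref{2dbu-b1.16}) together with Lemma \ref{2dbu-a1.5}, the exponent matches. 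A Gronwall argument on $\varphi(t)=\sigma^{p_1}\int m|\dot u|^{2+q}\,dx$ then closes the estimate.

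The principal obstacle is the careful algebraic verification that the integration-by-parts constant on the cross term $(\mathrm{div}\dot u)(|\dot u|^q)_{,j}$ is beaten by $\mu$ under the hypothesis $q^2<4\mu/(\mu+\lambda)$, together with correctly balancing the weight exponent $p_1=1+\frac{5q}{4}$ so that the source terms involving $\nabla u$, $\nabla P$ and the commutator $u\cdot\nabla$ produced by expanding $\dot u=u_t+u\cdot\nabla u$ are absorbable into the coercive term and the already-controlled quantities of Lemmas \ref{2dbu-a1.1}--\ref{2dbu-a1.5}; everything else is a direct transcription of Propositions~3--5 of \cite{Z} to the present two-phase setting, using Lemma \ref{2dbu-a1.0} to treat $n$ by $m$ whenever it appears.
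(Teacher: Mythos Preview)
Your proposal is correct and matches the paper's approach exactly: the paper's own proof consists of the single sentence ``From Lemma \ref{2dbu-a1.4}, Lemma \ref{2dbu-a1.5} and Ref.\ \cite{Z} (Propositions 3--5), we can get the next lemma,'' and you have faithfully unpacked what each of these ingredients contributes---the time-splitting at $T_1$ for (\ref{2dbu-b1.16}) and the $\sigma^{p_1}|\dot u|^q\dot u$ multiplier argument from \cite{Z} for (\ref{2dbu-b1.18}).
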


\begin{lem}\label{2dbu-a1.7}
Under the conditions of Theorem 1.1, it holds
\begin{equation}\label{2dbu-b1.19}
\|F\|_{L^\infty}+\|\omega\|_{L^\infty}\leq C(\|\nabla
u\|_{L^2}+\|m-\tilde{m}\|_{L^2}
+\|n-\tilde{n}\|_{L^2})^{\frac{2q-2}{4+5q}}\|m
\dot{u}\|_{L^{2+q}}^{\frac{6+3q}{4+5q}},
\end{equation}
and
\begin{equation}\label{2dbu-b1.20}
\int_0^{T_0}(\|F\|_{L^\infty}+\|\omega\|_{L^\infty})ds\leq
C(M)E_0^{\frac{\theta(q-1)}{4+5q}}(1+T_0), \ \ \ q\in (1,\frac{4}{3}).
\end{equation}
\begin{proof}
From (\ref{2dbu-q1.1}), (\ref{2dbu-b1.7}), (\ref{2dbu-b1.10}), and
the Gagliardo-Nirenberg inequality, we have
\begin{equation}
\begin{array}[b]{ll}\label{2dbu-b1.21}
\|F\|_{L^\infty} & \leq C\|F\|_{L^2}^{\frac{2(q-1)}{4+5q}}\|\nabla
F\|_{L^{2+q}}^{\frac{6+3q}{4+5q}} \vspace{.2cm} \\
&\leq C(\|\nabla
u\|_{L^{2}}+\|m-\tilde{m}\|_{L^{2}}+\|n-\tilde{n}\|_{L^{2}})^{\frac{2(q-1)}{4+5q}}\|m
\dot{u}\|_{L^{2+q}}^{\frac{6+3q}{4+5q}},
\end{array}
\end{equation}
and
\begin{equation}
\begin{array}[b]{ll}\label{2dbu-b1.22}
\D\int_0^{T_0}\|F\|_{L^\infty}ds&\leq\int_0^{T_0}(\|\nabla
u\|_{L^{2}}+\|m-\tilde{m}\|_{L^{2}}+\|n-\tilde{n}\|_{L^{2}})^{\frac{2(q-1)}{4+5q}}\|m
\dot{u}\|_{L^{2+q}}^{\frac{6+3q}{4+5q}}ds \vspace{.2cm}
\\&\leq
C(M)\int_0^{T_0}(\sigma^{-\frac{1}{2}}E_0^{\frac{\theta}{2}})^{\frac{2(q-1)}{4+5q}}(\sigma^{-\frac{p_1}{2+q}})^{\frac{6+3q}{4+5q}}ds
\vspace{.2cm} \\&\leq C(M)E_0^{\frac{\theta(q-1)}{4+5q}}(1+{T_0}).
\end{array}
\end{equation}
Similarly, we can obtain the same estimates for $\omega$. This
completes the proof of Lemma \ref{2dbu-a1.7}.
\end{proof}
\end{lem}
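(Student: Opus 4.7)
\textbf{Proof proposal for Lemma \ref{2dbu-a1.7}.} The plan is to treat this as a straightforward combination of a Gagliardo--Nirenberg interpolation between $L^2$ and $W^{1,2+q}$, the elliptic estimate \eqref{2dbu-b1.7}, and the weighted bounds on $\|\dot u\|_{L^{2+q}}$ and $\|\nabla u\|_{L^2}$ already produced in Lemmas \ref{2dbu-a1.5}--\ref{2dbu-a1.6}.

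For \eqref{2dbu-b1.19}, I would first apply the Gagliardo--Nirenberg inequality in $\mathbb{R}^3$,
\begin{equation*}
\|f\|_{L^\infty}\le C\|f\|_{L^2}^{\alpha}\|\nabla f\|_{L^{2+q}}^{1-\alpha},
\end{equation*}
with $f=F$ (and then $f=\omega$), where scaling forces $\alpha=\frac{2(q-1)}{4+5q}$ and $1-\alpha=\frac{6+3q}{4+5q}$; this is exactly the pair of exponents demanded by \eqref{2dbu-b1.19}. Next I would control $\|F\|_{L^2}$ from its definition \eqref{2dbu-w1.5} by $\|\nabla u\|_{L^2}+\|P(m,n)-P(\tilde m,\tilde n)\|_{L^2}$, and then bound the pressure difference by $\|m-\tilde m\|_{L^2}+\|n-\tilde n\|_{L^2}$ using the mean value theorem together with the density bounds in the claim \eqref{2dbu-c1.1} and Lemma \ref{2dbu-a1.0}, which make $P_m,P_n$ uniformly bounded on the range of $(m,n)$. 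Finally I would invoke \eqref{2dbu-b1.7} with $r=2+q\in(1,\infty)$ to replace $\|\nabla F\|_{L^{2+q}}$ by $\|m\dot u\|_{L^{2+q}}$. The argument for $\omega$ is identical.

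For \eqref{2dbu-b1.20}, I would insert the two weighted $L^\infty$-in-time bounds already available: Lemma \ref{2dbu-a1.5} gives $\|\nabla u\|_{L^2}\le\sigma^{-1/2}E_0^{\theta/2}$, Lemma \ref{2dbu-a1.1} gives $\|m-\tilde m\|_{L^2}+\|n-\tilde n\|_{L^2}\le CE_0^{1/2}$ (absorbed by the $\sigma^{-1/2}E_0^{\theta/2}$ term since $\theta<1$ and $\sigma\le 1$), and Lemma \ref{2dbu-a1.6} gives $\|m\dot u\|_{L^{2+q}}\le C(M)\sigma^{-p_1/(2+q)}$ with $p_1=1+5q/4$. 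Substituting into \eqref{2dbu-b1.19} and pulling out the $E_0$ factor yields an integrand of the form $C(M)E_0^{\theta(q-1)/(4+5q)}\sigma^{-\beta}$ with
\begin{equation*}
\beta=\frac{q-1}{4+5q}+\frac{p_1(6+3q)}{(2+q)(4+5q)}=\frac{q-1}{4+5q}+\frac{3}{4},
\end{equation*}
where the algebraic identity $\frac{p_1(6+3q)}{2+q}=\frac{3(4+5q)}{4}$ is the key simplification.

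The main obstacle—and the reason for the particular choice of $q$ in \eqref{2dbu-w1.4}—is to verify that $\beta<1$ so that $\int_0^{\min\{1,T_0\}}\sigma^{-\beta}\,ds$ is finite; this reduces to $\frac{q-1}{4+5q}<\frac14$, i.e. $q>-8$, which is trivially true for $q\in(1,\tfrac{4}{3})$. Splitting the $s$-integral at $1$ and estimating the tail $[1,T_0]$ trivially by the length $T_0$ yields the factor $1+T_0$ of \eqref{2dbu-b1.20}, completing the proof.
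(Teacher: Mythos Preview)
Your proposal is correct and follows essentially the same route as the paper: Gagliardo--Nirenberg interpolation between $L^2$ and $W^{1,2+q}$ with the exponents $\tfrac{2(q-1)}{4+5q}$ and $\tfrac{6+3q}{4+5q}$, the elliptic bound \eqref{2dbu-b1.7}, and then insertion of the weighted estimates from Lemmas \ref{2dbu-a1.5}--\ref{2dbu-a1.6}. Your explicit verification that the resulting power $\beta=\tfrac{q-1}{4+5q}+\tfrac34$ of $\sigma^{-1}$ stays below $1$ (a point the paper leaves implicit) is a nice addition; note only that this integrability condition is in fact automatic for all $q>1$, so the upper bound $q<\tfrac43$ in \eqref{2dbu-w1.4} is not what is being used here---that restriction enters earlier, in the proof of \eqref{2dbu-b1.18}.
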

Now, we apply the estimates in Lemmas
\ref{2dbu-a1.7}-\ref{2dbu-a1.8} and the hypothesis (\ref{2dbu-c1.1})
to close the bounds of $m$.
\begin{lem}\label{2dbu-a1.8}

Under the conditions of Theorem 1.1, for given constants $\underline{m}_1$ and $\overline{m}_1$ satisfying
$0<\underline{m}<\underline{m}_1<\tilde{m}<\overline{m}_1<\overline{m}$ and $\overline{m}_1\geq\frac{\tilde{n}}{\underline{s}_0}$, there exists a constant $\varepsilon>0$ sufficiently small, such that
\begin{equation}\label{2dbu-b1.23}
\underline{m}_1\leq m(x,t)\leq \overline{m}_1,\ \ \  (x,t)\in
\mathbb{R}^3\times[0,T_0],
\end{equation}
provided that $E_0\leq \varepsilon$.
 Furthermore, the estimates in Lemma \ref{2dbu-a1.1}-\ref{2dbu-a1.7} hold.
\begin{proof}
Using the similar argument as that in Ref. \cite{Y.Z.Z} (Proposition
2.5) and Ref. \cite{Z} (Proposition 7), we can easily obtain this
lemma in $\mathbb{R}^3$ and omit the details.
\end{proof}
\end{lem}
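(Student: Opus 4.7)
The plan is to close the pointwise bounds on $m$ by a maximum-principle argument along particle paths, adapting Proposition~7 of \cite{Z} and exploiting a simplification specific to the two-phase model. Let $X(t;x_0)$ denote the characteristic through $x_0$, i.e.\ $\partial_t X=u(X,t)$, $X(0;x_0)=x_0$. A direct computation from $(\ref{2dbu-E1.1})_1$ and $(\ref{2dbu-E1.1})_2$ yields $\frac{D}{Dt}(n/m)=0$, so along every characteristic the ratio $s:=n/m$ is the constant $s_0:=n_0(x_0)/m_0(x_0)\in[\underline{s}_0,\overline{s}_0]$. Consequently $P(m,n)=P(m,s_0m)$ on the path, and the problem reduces to a scalar ODE in $m$.

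Combining the continuity equation $\dot m=-m\,\mathrm{div}\,u$ with the definition (\ref{2dbu-w1.5}) of the effective viscous flux produces
\[
(\lambda+2\mu)\frac{d}{dt}\log m(X(t),t)=-F(X(t),t)-h_{s_0}\bigl(m(X(t),t)\bigr),
\]
where $h_{s_0}(m):=P(m,s_0m)-P(\tilde m,\tilde n)$. By Remark~\ref{2dbu-L1.2}, the map $m\mapsto P(m,s_0m)$ is strictly increasing, so $h_{s_0}$ has a unique positive zero $m^{*}(s_0)$. The assumption $\overline{s}_0=\tilde n/\tilde m$ forces $s_0\leq\tilde n/\tilde m$, which together with monotonicity of $P$ pins down $\tilde m\leq m^{*}(s_0)\leq\tilde n/\underline{s}_0$.

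Next, I run the Hoff--Zhang self-correcting argument. On any maximal subinterval $[t_1,t_2]\subset[0,T_0]$ where $m(X(\cdot),\cdot)>m^{*}(s_0)$ the term $h_{s_0}$ is nonnegative and can be dropped, giving
\[
\log m(X(t),t)\leq\log\max\{\overline{m}_0,m^{*}(s_0)\}+\frac{1}{\lambda+2\mu}\int_0^{T_0}\|F\|_{L^\infty}\,d\tau.
\]
On intervals where $m<m^{*}(s_0)$, $h_{s_0}$ has the opposite sign and the analogous inequality gives the matching lower bound $\log m\geq\log\underline{m}_0-(\lambda+2\mu)^{-1}\int_0^{T_0}\|F\|_{L^\infty}\,d\tau$. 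Both error terms are controlled by Lemma~\ref{2dbu-a1.7}, which delivers $\int_0^{T_0}\|F\|_{L^\infty}\,d\tau\leq C(M)E_0^{\theta(q-1)/(4+5q)}(1+T^{*})$. Since $T^{*}<\infty$ under the contradiction hypothesis, shrinking $\varepsilon$ makes the exponential correction lie inside the prescribed margins $\underline{m}_1<\underline{m}_0$ and $\overline{m}_1>\max\{\overline{m}_0,\tilde n/\underline{s}_0\}$, yielding (\ref{2dbu-b1.23}).

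The main obstacle is aligning the constants so that the per-characteristic equilibrium $m^{*}(s_0)$ always fits into the allowed window; the structural hypotheses $\overline{s}_0=\tilde n/\tilde m$ and $\overline{m}_1\geq\tilde n/\underline{s}_0$ in the statement of the lemma are tailored precisely for this bracketing. The final clause, that Lemmas~\ref{2dbu-a1.1}--\ref{2dbu-a1.7} continue to hold under the refined bound, is then automatic: those lemmas were derived under the standing assumption $\underline{m}\leq m\leq\overline{m}$ built into the definition of $T_0$, and (\ref{2dbu-b1.23}) is strictly stronger.
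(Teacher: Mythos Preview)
Your reduction along characteristics is exactly the skeleton of the argument in \cite{Z}~(Prop.~7) and \cite{Y.Z.Z}~(Prop.~2.5): the conservation of $n/m$, the scalar ODE
\[
(\lambda+2\mu)\,\frac{d}{dt}\log m=-F-h_{s_0}(m),
\]
and the bracketing $\tilde m\le m^{*}(s_0)\le\tilde n/\underline{s}_0$ via the hypothesis $\overline{s}_0=\tilde n/\tilde m$ are all correct and are precisely why those structural conditions appear in the statement.

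The gap is in how you close. You discard $h_{s_0}$ on $\{m>m^{*}(s_0)\}$, bound the residual by $\int_0^{T_0}\|F\|_{L^\infty}\,d\tau\le C(M)E_0^{\theta(q-1)/(4+5q)}(1+T_0)$ from Lemma~\ref{2dbu-a1.7}, and then absorb the factor $(1+T_0)$ by invoking $T_0<T^{*}<\infty$ and shrinking $\varepsilon$. This is circular: the paper states explicitly that the constants $C$ in Step~1 --- and hence the choice of $\varepsilon$, $\underline m$, $\overline m$ in the Claim --- must be independent of $T_0$, $T$, and $T^{*}$. An $\varepsilon$ that depends on $T^{*}$ does not prove the Claim, so the continuity argument at the end of Step~1 does not close and the contradiction is never reached.

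What the cited arguments actually do is \emph{retain} the dissipative term. For $t\ge1$ one has the pointwise bound $\|F(\cdot,t)\|_{L^\infty}\le C(M)E_0^{\theta(q-1)/(4+5q)}$ (this is the content of \eqref{2dbu-b1.21}--\eqref{2dbu-b1.22} with $\sigma=1$). On any interval where $m>\overline{m}_1$, one has $h_{s_0}(m)\ge h_{s_0}(\overline{m}_1)=:c_1>0$ uniformly in $s_0\in[\underline{s}_0,\overline{s}_0]$, because $\overline{m}_1\ge\tilde n/\underline{s}_0\ge m^{*}(s_0)$. Thus $(\lambda+2\mu)\frac{d}{dt}\log m\le C(M)E_0^{\theta(q-1)/(4+5q)}-c_1<0$ once $\varepsilon$ is small enough, forcing $m$ to decrease and ruling out $m>\overline{m}_1$ for large time. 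The short-time piece $t\in[0,1\wedge T_0]$ contributes only a fixed, $T_0$-independent amount to $\int\|F\|_{L^\infty}$ and is handled by your ``drop $h_{s_0}$'' bound. The lower bound is symmetric, using $h_{s_0}(m)\le h_{s_0}(\underline{m}_1)<0$ since $\underline{m}_1<\tilde m\le m^{*}(s_0)$.
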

By (\ref{2dbu-b1.10}) and (\ref{2dbu-b1.23}), we get $T_0=T$. Thus, (\ref{2dbu-c1.1}) and (\ref{2dbu-c1.2}) hold for any $T\in(0,T^*)$. This ends the proof of  {\bf Step 1}.\\

{\noindent\bf Step 2:} Estimates for the higher order derivatives of ($m$, $n$, $u$).\\

\noindent Just as   in \cite{Sun-Wang-Zhang, Sun-Zhang}, we
introduce the quantity $w$, which is defined by
$$
w=u-v,
$$
where $v$ is the solution of
 \begin{equation}\label{2dbu-E3.10}
    \left\{
    \begin{aligned}
      & \mu \Delta v+(\lambda+\mu)\nabla \mathrm{div} v=\nabla P(m, n) \ \ \mathrm{in} \ {\mathbb{R}^3},\vspace{.2cm} \\
     & v(x)=0  \ \ \ \ \mathrm{as}\ \  |x|\rightarrow\infty.
    \end{aligned}
    \right.
    \end{equation}
The following estimates can be found in the ref. \cite{Sun-Wang-Zhang} (Proposition 2.1):
 \begin{eqnarray}\label{2dbu-E3.11}
  \left\{
    \begin{aligned}
      &  \|\nabla v\|_{L^p}\leq C\|P(m, n)-P(\tilde{m}, \tilde{n})\|_{L^p},\vspace{.2cm} \\
& \|\nabla^2v\|_{L^p}\leq C\|\nabla P(m, n)\|_{L^p},
\end{aligned}
    \right.
 \end{eqnarray} for any $p\in(1, \infty)$.

  By using the equations (\ref{2dbu-E1.1}), we find $w$ satisfies
 \begin{equation}\label{2dbu-E3.12}
    \left\{
    \begin{aligned}
      & \mu \Delta w+(\lambda+\mu)\nabla \mathrm{div} w=m \dot{u} \ \ \mathrm{in} \ {\mathbb{R}^3},\vspace{.2cm} \\
     & w(x)=0  \ \ \ \ \mathrm{as}\ \  |x|\rightarrow\infty.
    \end{aligned}
    \right.
    \end{equation}

\begin{lem}\label{2dbu-a1.9}
Under the conditions of Theorem 1.1, it holds that
\begin{equation}\label{2dbu-b1.80}
\sup\limits_{t\in[0,T]}\int
m|\dot{u}|^2dx+\int_0^T\int\left(|\nabla\dot{u}|^2+|\frac{D}{Dt}\mathrm{div}{u}|^2\right)dxdt
\leq K.
\end{equation}
\end{lem}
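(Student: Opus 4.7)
The plan is to derive (\ref{2dbu-b1.80}) from a weighted energy identity for $\int m|\dot u|^2\,dx$, obtained by applying the material derivative $\partial_t+u^i\partial_i$ to the momentum equation, testing against $\dot u$, and closing the resulting differential inequality by Gronwall. All the estimates accumulated in Step 1 (Lemmas \ref{2dbu-a1.1}--\ref{2dbu-a1.8}), together with the now-verified positive lower and upper bounds of $m$ and $n$, will supply enough control on $\nabla u$, $F$, $\omega$, and $P-P(\tilde m,\tilde n)$ to handle every cubic and quartic remainder.

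First I would start from (\ref{2dbu-c1.4}) and apply the material derivative to both sides. Using $\dot m=-m\,\mathrm{div}\,u$ from the continuity equation, the left side becomes $m\ddot u^j-m(\mathrm{div}\,u)\dot u^j$. The right side is rewritten using the commutator identity $\partial_t\partial_j\phi+u^i\partial_i\partial_j\phi=\partial_j\dot\phi-\partial_j u^i\,\partial_i\phi$ applied to $F$ and $\omega^{j,k}$, together with
$$
\dot F=(\lambda+2\mu)(\mathrm{div}\,\dot u-\partial_j u^i\,\partial_i u^j)+(mP_m+nP_n)\,\mathrm{div}\,u,
$$
where $\dot P=-(mP_m+nP_n)\,\mathrm{div}\,u$ follows from $\dot m+m\,\mathrm{div}\,u=0$ and $\dot n+n\,\mathrm{div}\,u=0$. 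Multiplying by $\dot u^j$ and integrating by parts then yields
$$
\tfrac12\tfrac{d}{dt}\!\int m|\dot u|^2\,dx+\mu\!\int|\nabla\dot u|^2\,dx+(\lambda+\mu)\!\int|\mathrm{div}\,\dot u|^2\,dx\le C\!\int|\nabla u|\,|\dot u|\,(|\nabla\dot u|+|\dot u||\nabla u|)\,dx+\mathrm{l.o.t.},
$$
where the lower order terms are at most cubic in $\nabla u$, $P-P(\tilde m,\tilde n)$, and $\dot u$.

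To close the bound, each cubic or quartic right-hand piece is split, via H\"older and Young, as $\varepsilon\|\nabla\dot u\|_{L^2}^2+C_\varepsilon\|\sqrt m\,\dot u\|_{L^2}^2+g(t)$, with $g\in L^1(0,T)$ thanks to the bounds on $\|\nabla u\|_{L^2\cap L^4}$, $\|F\|_{L^\infty}$, $\|\omega\|_{L^\infty}$ from Lemmas \ref{2dbu-a1.3}, \ref{2dbu-a1.6}, \ref{2dbu-a1.7}, and (\ref{2dbu-b1.9}) for the pressure. Absorbing the $\varepsilon$-term into the dissipation and applying Gronwall gives $\sup_t\int m|\dot u|^2+\int_0^T\!\int|\nabla\dot u|^2\le K$. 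The starting value $\int m_0|\dot u_0|^2$ is finite because $(m\dot u)|_{t=0}=\mu\Delta u_0+(\lambda+\mu)\nabla\mathrm{div}\,u_0-\nabla P(m_0,n_0)\in L^2$ under the hypotheses $m_0-\tilde m,\,n_0-\tilde n,\,u_0\in H^3$ together with $m_0\ge\underline m_0>0$. Finally, the $\int|\tfrac{D}{Dt}\mathrm{div}\,u|^2$ part is obtained from the identity $(\lambda+2\mu)\tfrac{D}{Dt}\mathrm{div}\,u=\dot F-(mP_m+nP_n)\mathrm{div}\,u$ and the formula for $\dot F$ above, which express $\tfrac{D}{Dt}\mathrm{div}\,u$ in $L^2(0,T;L^2)$ in terms of $\nabla\dot u$, $|\nabla u|^2$, and $\mathrm{div}\,u$, all now under control.

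The main obstacle is the borderline cubic $\int|\nabla u|\,|\dot u|\,|\nabla\dot u|\,dx$, which is genuinely top order and cannot be handled by the $L^2$ bound on $\nabla u$ alone. The remedy is to invoke the $L^\infty$-type bound on $F,\omega$ from Lemma \ref{2dbu-a1.7}, combined with (\ref{2dbu-b1.6})--(\ref{2dbu-b1.8}), to upgrade the space-time integrability of $\nabla u$ sufficiently for Young's inequality to close. A second subtle point, which distinguishes this step from Step 1, is that no $\sigma$ weight is allowed near $t=0$; it is precisely the $H^3$ regularity of the initial data together with the positive lower bound on $m_0$ that rescues the initial-time endpoint, and the fact that the constant $K$ may depend on $T^*$ is essential for this part of the argument.
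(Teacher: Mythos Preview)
Your strategy is sound and will lead to (\ref{2dbu-b1.80}), but the route differs from the paper's in a way worth noting. The paper does \emph{not} work with the $F,\omega$ form (\ref{2dbu-c1.4}); instead it applies the conservative operator $\partial_t+\mathrm{div}(u\,\cdot)$ directly to the momentum equation $(1.1)_3$ and tests with $\dot u$. Because $(m\dot u^j)_t+\mathrm{div}(um\dot u^j)=m\ddot u^j$ exactly (thanks to mass conservation), the extra cubic term $\int m(\mathrm{div}u)|\dot u|^2$ that your material-derivative computation produces simply never appears. The pressure piece ($K_2$) then reduces to harmless $|\nabla u|\,|\nabla\dot u|$ contributions with bounded coefficients, and the viscous pieces ($K_3,K_4$) produce the dissipations $\mu\int|\nabla\dot u|^2$ and $(\lambda+\mu)\int|\tfrac{D}{Dt}\mathrm{div}u|^2$ \emph{directly}, with the sole remainder $K\int_0^t\!\int|\nabla u|^4\,dx\,ds\le K$ coming from (\ref{2dbu-b1.37}). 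In particular the paper needs neither Gronwall nor any appeal to the $L^\infty$ bounds on $F,\omega$ from Lemma~\ref{2dbu-a1.7}.

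Your approach is a legitimate variant: the $F,\omega$ formulation together with the commutator identity does yield the displayed inequality, and the terms you flag can indeed be closed by Gronwall once one observes that the commutator remnants, after one further integration by parts and the pointwise bound $|F|\le C(1+|\nabla u|)$, collapse to $\varepsilon\|\nabla\dot u\|_{L^2}^2+C\|\sqrt m\,\dot u\|_{L^2}^2+C\|\nabla u\|_{L^4}^4$. So the $L^\infty$ remedy you propose is available but heavier than necessary. The trade-off is that your route makes the role of $F$ and $\omega$ more transparent and recovers the $\tfrac{D}{Dt}\mathrm{div}u$ bound as a clean algebraic corollary, whereas the paper's choice of operator is more economical and delivers both dissipations in one shot without any a posteriori argument.
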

 \begin{proof}
We take the operator $\partial_t+\mathrm{div}(u\cdot)$ in $(1.1)_3$,
multiplying the resulting equations by $\dot{u}$, we have
\begin{equation}
\begin{array}[b]{cl}
&\ \ \
{\dot{u}}^j[\partial_t(m\dot{u}^j)+\mathrm{div}(um\dot{u}^j)]+\dot{u}^j[\partial_j
P_t+\mathrm{div}(u\partial_j P)]\vspace{.2cm}
\\&=\mu{\dot{u}}^j[\partial_t\Delta
{u}^j+\mathrm{div}(u\Delta u^j)]
+(\lambda+\mu)\dot{u}^j[\partial_j\partial_t(\mathrm{div}u)+\mathrm{div}(u\partial_j(\mathrm{div}u))].
\end{array}
\end{equation}
Integrating the above equation over $\mathbb{R}^3$ and using
integration by parts, we have
\begin{equation}\label{2dbu-b1.78}
\begin{array}[b]{rl}
\D\frac{1}{2}\int m|\dot{u}|^2dx=&\D\frac{1}{2}\int
m_0|\dot{u}_0|^2dx-\int_0^t\int\dot{u}^j[\partial_jP_t+\mathrm{div}(u\cdot\partial_jP)]dxds
+\int_0^t\int\mu\dot{u}^j[\Delta u_t^j\vspace{.2cm} \\
&\D+\mathrm{div}(u\cdot\Delta u^j)]dxds+\int_0^t\int(\lambda+\mu)\dot{u}^j[\partial_t\partial_j\mathrm{div}u+\mathrm{div}(u\cdot\partial_j\mathrm{div}u)]dxds\vspace{.2cm} \\
:=&\sum\limits_{i=1}^{4}K_i.
\end{array}
\end{equation}
From $m_0-\tilde{m}\in H^1$ and $u_0\in H^2$, we know
\begin{equation}\label{{2dbu-b1.81}}
K_1=\frac{1}{2}\int m_0|\dot{u}_0|^2dx\leq K.
\end{equation}
From the integration by part, the equation $(1.1)_1$, $(1.1)_2$,
(\ref{2dbu-q1.1}), (\ref{2dbu-b1.23}) and the Cauchy inequality, we
get
\begin{equation}\label{2dbu-b1.77}
\begin{array}[b]{ll}
K_2&\D=-\int_0^t\int\dot{u}^j[\partial_jP_t+\mathrm{div}(u\partial_jP)]dxds\vspace{.2cm} \\
   &\D=\int_0^t\int\left(\partial_j\dot{u}^j(P_mm_t+P_nn_t)+\partial_k\dot{u}^j\partial_jPu^k\right)dxds\vspace{.2cm} \\
   &\D=-\int_0^t\int \left(P_m(m\mathrm{div}u+u\cdot\nabla m)\partial_j\dot{u}^j +P_n(n\mathrm{div}u+u\cdot\nabla n)\partial_j\dot{u}^j\right)dxds\vspace{.2cm} \\
     &\ \ \ -\D\int_0^t\int P(m,n)\partial_j(\partial_k\dot{u}^ju^k)dxds\vspace{.2cm} \\
   &\D=\int_0^t\int\left(-P_mm\mathrm{div}u\partial_j\dot{u}^j-P_nn\mathrm{div}u\partial_j\dot{u}^j+\partial_k(\partial_j\dot{u}^ju^k)P\vspace{.2cm} -\partial_j(\partial_k\dot{u}^ju^k)P\right)dxds\vspace{.2cm} \\
   &\D=\int_0^t\int\left(-P_mm\mathrm{div}u\partial_j\dot{u}^j-P_nn\mathrm{div}u\partial_j\dot{u}^j
   +\partial_j\dot{u}^j\mathrm{div}uP-\partial_k\dot{u}^j\partial_j{u}^kP\right)dxds\vspace{.2cm} \\
    &\D\leq KE_0+\frac{\mu}{4} \int_0^t\int|\nabla \dot{u}|^2dxds.
\end{array}
\end{equation}
From the integration by part and the Cauchy inequality, we get
\begin{equation}\label{2dbu-b1.76}
\begin{array}[b]{ll}
K_3&\D=\mu\int_0^t\int\dot{u}^j[\Delta{u}_t^j+\mathrm{div}(u\Delta u^j)]dxds\vspace{.2cm} \\
   &\D=-\mu\int_0^t\int[\partial_i\dot{u}^j\partial_i{u}_t^j+\Delta u^j u\cdot\nabla \dot{u}^j]dxds\vspace{.2cm} \\
   &\D=-\mu\int_0^t\int \left(|\nabla \dot{u}|^2-\partial_i\dot{u}^ju^k\partial_k\partial_iu^j-\partial_i\dot{u}^j\partial_iu^k\partial_ku^j+\Delta u^j u\cdot\nabla \dot{u}^j\right)dxds\vspace{.2cm} \\
   &\D=-\mu\int_0^t\int\left( |\nabla \dot{u}|^2+\partial_i\dot{u}^j\partial_ku^k\partial_iu^j-\partial_i\dot{u}^j\partial_iu^k\partial_ku^j-\partial_i u^j \partial_iu^k\partial_k \dot{u}^j\right)dxds\vspace{.2cm} \\
   &\D\leq -\frac{\mu}{2} \int_0^t\int|\nabla \dot{u}|^2dxds+K\int_0^t\int |\nabla
   u|^4dxds.
\end{array}
\end{equation}
From the integration by part, (\ref{2dbu-q1.1}), (\ref{2dbu-b1.23})
and the Cauchy inequality, we get
\begin{equation}\label{2dbu-b1.75}
\begin{array}[b]{ll}
K_4&\D=(\lambda+\mu)\int_0^t\int \dot{u}^j[\partial_t\partial_j\mathrm{div}u+\mathrm{div}(u\partial_j\mathrm{div}u)]dxds\vspace{.2cm} \\
   &\D=-(\lambda+\mu)\int_0^t\int\left(\partial_j\dot{u}^j[\partial_t(\mathrm{div}u)+\mathrm{div}(u\mathrm{div}u)]
   +\dot{u}^j\mathrm{div}(\partial_ju\mathrm{div}u)\right)dxds\vspace{.2cm} \\
   &\D=-(\lambda+\mu)\int_0^t\int\partial_j\dot{u}^j\frac{D}{Dt}\mathrm{div}udxds-(\lambda+\mu)\int_0^t\int\partial_j\dot{u}^j(\mathrm{div}u)^2dxds\vspace{.2cm} \\
   &\ \ \ \ \D
   -(\lambda+\mu)\int_0^t\int\dot{u}^j\partial_i(\partial_ju^i\mathrm{div}u)dxds\vspace{.2cm} \\
   &\D=-(\lambda+\mu)\int_0^t\int|\frac{D}{Dt}\mathrm{div}{u}|^2dxds
   -(\lambda+\mu)\int_0^t\int\partial_j{u}^i\partial_i{u}^j\frac{D}{Dt}\mathrm{div}udxds\vspace{.2cm} \\
   &\ \ \ \ \D-(\lambda+\mu)\int_0^t\int\partial_j\dot{u}^j(\mathrm{div}u)^2dxds
   +(\lambda+\mu)\int_0^t\int\partial_i\dot{u}^j\partial_j{u}^i\mathrm{div}udxds\vspace{.2cm} \\
   &\D\le-\frac{(\lambda+\mu)}{2}\int_0^t\int|\frac{D}{Dt}\mathrm{div}{u}|^2dxds+\frac{\mu}{8}\int_0^t\int|\nabla\dot{u}|^2dxds+K\int_0^t\int|\nabla
   u|^4dxds.
\end{array}
\end{equation}
Using a similar argument as Lemma \ref{2dbu-a1.5}, we can get
\begin{equation}\label{2dbu-b1.37}
\int_0^t\int|\nabla
   u|^4dxds\leq K.
\end{equation}
From (\ref{2dbu-b1.78})-(\ref{2dbu-b1.37}) and the Cauchy
inequality, we can get
\begin{equation}\label{2dbu-b1.74}
\frac{1}{2}\int m|\dot{u}|^2dx+\frac{\mu}{8} \int_0^t\int|\nabla
\dot{u}|^2dxds+\frac{\lambda+\mu}{2}\int_0^t\int|\frac{D}{Dt}\mathrm{div}{u}|^2dxds
\leq K.
\end{equation}
We complete the proof of Lemma  \ref{2dbu-a1.9}.
\end{proof}

\begin{cor}\label{2dbu-C3.3}
 Under the conditions of Theorem 1.1, it holds that
 \begin{equation}\label{2dbu-t1.79}
\int_0^T\|\nabla w\|_{W^{1,l_1}}^2ds \leq K,\ \ \ where   \ \ \
l_1\in(3,6],\ \
 \ or\ \  l_1=2.
\end{equation}
 \end{cor}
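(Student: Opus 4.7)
The strategy is to rewrite $\|\nabla w\|_{W^{1,l_1}}$ in terms of quantities involving $\dot u$ via the Lam\'e system (\ref{2dbu-E3.12}), and then feed in the bounds already supplied by Lemma \ref{2dbu-a1.9}. First, I would invoke the standard $L^p$ elliptic regularity for (\ref{2dbu-E3.12}) on $\mathbb{R}^3$ to get
\begin{equation*}
\|\nabla^2 w\|_{L^{l_1}}\le C\|m\dot u\|_{L^{l_1}}\le C\|\dot u\|_{L^{l_1}},\qquad l_1\in(1,\infty),
\end{equation*}
where the last inequality uses the upper bound $m\le\overline{m}_1$ from (\ref{2dbu-b1.23}). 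For the lower-order piece $\|\nabla w\|_{L^{l_1}}$, I would interpolate via Gagliardo-Nirenberg,
\begin{equation*}
\|\nabla w\|_{L^{l_1}}\le C\|\nabla w\|_{L^2}^{(6-l_1)/(2l_1)}\|\nabla^2 w\|_{L^2}^{(3l_1-6)/(2l_1)},
\end{equation*}
and note that both factors on the right are uniformly bounded in $t$: $\|\nabla w\|_{L^2}\le\|\nabla u\|_{L^2}+\|\nabla v\|_{L^2}\le K$ by (\ref{2dbu-b1.16}), (\ref{2dbu-E3.11})$_1$ and Lemma \ref{2dbu-a1.1}, while $\|\nabla^2 w\|_{L^2}\le C\|m\dot u\|_{L^2}\le K$ from Lemma \ref{2dbu-a1.9}. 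Consequently, only $\int_0^T\|\dot u\|_{L^{l_1}}^2\,dt$ remains to be controlled.

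For $l_1=2$ this is immediate: using $m\ge\underline{m}_1$ from (\ref{2dbu-b1.23}) and Lemma \ref{2dbu-a1.9},
\begin{equation*}
\int_0^T\|\dot u\|_{L^2}^2\,dt\le\underline{m}_1^{-1}\int_0^T\!\int m|\dot u|^2\,dxdt\le K.
\end{equation*}
For $l_1\in(3,6]$ I would apply Gagliardo-Nirenberg in space,
\begin{equation*}
\|\dot u\|_{L^{l_1}}\le C\|\dot u\|_{L^2}^{(6-l_1)/(2l_1)}\|\nabla\dot u\|_{L^2}^{(3l_1-6)/(2l_1)},
\end{equation*}
square, and integrate in time. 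Using $\sup_t\|\dot u\|_{L^2}\le K$, again from Lemma \ref{2dbu-a1.9} and the lower bound on $m$, this yields
\begin{equation*}
\int_0^T\|\dot u\|_{L^{l_1}}^2\,dt\le K\int_0^T\|\nabla\dot u\|_{L^2}^{(3l_1-6)/l_1}\,dt.
\end{equation*}
The exponent $(3l_1-6)/l_1$ lies in $(1,2]$ for $l_1\in(3,6]$, so H\"older in time combined with $\int_0^T\|\nabla\dot u\|_{L^2}^2\,dt\le K$ from Lemma \ref{2dbu-a1.9} and the finiteness $T<T^*<\infty$ closes the estimate.

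I do not expect a serious obstacle; everything reduces to a bookkeeping exercise once Lemma \ref{2dbu-a1.9} is available. The one check worth flagging is that the Gagliardo-Nirenberg exponent $(3l_1-6)/l_1$ stays no larger than $2$ across the range $l_1\in(3,6]$. The upper endpoint $l_1=6$ saturates at exponent $2$, corresponding precisely to the Sobolev embedding $H^1\hookrightarrow L^6$, and this is exactly why the corollary restricts $l_1$ to $(3,6]\cup\{2\}$; the intermediate gap $l_1\in(2,3]$ could be handled analogously but is not needed in the sequel.
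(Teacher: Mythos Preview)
Your proposal is correct and follows essentially the same route as the paper: use elliptic regularity for the Lam\'e system (\ref{2dbu-E3.12}) to reduce $\|\nabla w\|_{W^{1,l_1}}$ to $\|m\dot u\|_{L^{l_1}}\le C\|\dot u\|_{L^{l_1}}$, then control this in $L^2_t$ via the $H^1$ bounds on $\dot u$ from Lemma~\ref{2dbu-a1.9}. The paper compresses your Gagliardo--Nirenberg interpolation into a single invocation of the Sobolev embedding $H^1\hookrightarrow L^{l_1}$ (writing $\int_0^T\|\dot u\|_{L^{l_1}}^2\,ds\le K\int_0^T\|\dot u\|_{H^1}^2\,ds\le K$), but the substance is identical.
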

 \begin{proof}
From (\ref{2dbu-b1.23}), (\ref{2dbu-E3.12}), (\ref{2dbu-b1.80}) and
Sobolev's embedding theorem, we have
\begin{equation}\label{2dbu-t1.1}
\begin{array}[b]{ll}
\D\int_0^T\|\nabla w\|_{W^{1,l_1}}^2ds&\leq\D K\int_0^T\| m \dot{u}\|_{L^{l_1}}^2ds\vspace{.2cm} \\
&\D\leq K\int_0^T\|\dot{u}\|_{L^{l_1}}^2ds\vspace{.2cm} \\
&\D\leq K\int_0^T\|\dot{u}\|_{H^1}^2ds\vspace{.2cm} \\
&\D\leq K,
\end{array}
\end{equation}
where we have used the standard elliptic estimate.
\end{proof}

\begin{lem}\label{2dbu-a1.10}
Under the conditions of Theorem 1.1, it holds that
\begin{equation}\label{2dbu-q1.98}
\sup\limits_{t\in[0,T]}(\|\nabla m(\cdot,t)\|_{L^{q_1}}+\|\nabla
n(\cdot,t)\|_{L^{q_1}})\leq K,\ \ \ \ \ q_1\in(3,6].\ \ \ \
\end{equation}
\end{lem}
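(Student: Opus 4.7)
The plan is to derive a transport-type differential inequality for
$g(t):=\|\nabla m(\cdot,t)\|_{L^{q_1}}+\|\nabla n(\cdot,t)\|_{L^{q_1}}$
and close it via a logarithmic Gr\"onwall argument, relying crucially on the time-integrability of $\|F\|_{L^\infty}+\|\omega\|_{L^\infty}$ provided by Lemma \ref{2dbu-a1.7}. First I would differentiate $m_t+\mathrm{div}(mu)=0$ in space, obtaining
$$\partial_t(\partial_i m)+u^k\partial_k(\partial_i m)=-\partial_i u^k\,\partial_k m-(\mathrm{div}\,u)\partial_i m-m\,\partial_i\mathrm{div}\,u,$$
and the analogous identity for $n$. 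Multiplying by $|\nabla m|^{q_1-2}\partial_i m$, summing on $i$, integrating over $\mathbb{R}^3$, using $\int u^k\partial_k|\nabla m|^{q_1}dx=-\int\mathrm{div}\,u\,|\nabla m|^{q_1}dx$, and invoking the uniform upper bound on $m$ from Lemma \ref{2dbu-a1.8}, gives
$$\frac{d}{dt}\|\nabla m\|_{L^{q_1}}\leq C\|\nabla u\|_{L^\infty}\|\nabla m\|_{L^{q_1}}+C\|\nabla^2 u\|_{L^{q_1}},$$
and the identical estimate holds for $\nabla n$.

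The second term on the right is controlled as follows. From the identity (\ref{2dbu-w1.55}), the elliptic estimate (\ref{2dbu-b1.7}), Remark \ref{2dbu-L1.2} (which makes $P_m$, $P_n$ bounded in view of Lemmas \ref{2dbu-a1.0} and \ref{2dbu-a1.8}), and the Sobolev embedding $H^1\hookrightarrow L^{q_1}$ (valid since $q_1\in(3,6]$),
$$\|\nabla^2 u\|_{L^{q_1}}\leq C\bigl(\|m\dot u\|_{L^{q_1}}+\|\nabla P(m,n)\|_{L^{q_1}}\bigr)\leq C\bigl(\|\dot u\|_{H^1}+g(t)\bigr),$$
where Lemma \ref{2dbu-a1.9} yields $\|\dot u\|_{H^1}\in L^2(0,T)$.

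The main obstacle is producing an integrable-in-time bound on $\|\nabla u\|_{L^\infty}$, which is not a priori uniformly bounded. I would apply a Beale-Kato-Majda type logarithmic interpolation,
$$\|\nabla u\|_{L^\infty}\leq C\bigl(\|F\|_{L^\infty}+\|\omega\|_{L^\infty}+\|P(m,n)-P(\tilde m,\tilde n)\|_{L^\infty}\bigr)\log\bigl(e+\|\nabla^2 u\|_{L^{q_1}}\bigr)+C(1+\|\nabla u\|_{L^2}),$$
derived from the decomposition (\ref{2dbu-w1.55}) together with the standard BMO-to-$L^\infty$ logarithmic inequality. The pressure factor is uniformly bounded via (\ref{2dbu-b1.23}), Lemma \ref{2dbu-a1.0} and Remark \ref{2dbu-L1.2}, while the key input (\ref{2dbu-b1.20}) of Lemma \ref{2dbu-a1.7} supplies $\int_0^T(\|F\|_{L^\infty}+\|\omega\|_{L^\infty})ds\leq K$.

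Putting the pieces together produces a differential inequality of the schematic form
$$g'(t)\leq\phi(t)\bigl(e+g(t)\bigr)\log\bigl(e+g(t)+\|\dot u\|_{H^1}\bigr)+\psi(t)\bigl(e+g(t)\bigr),$$
with $\phi\in L^1(0,T)$ from (\ref{2dbu-b1.20}) and $\psi\in L^2(0,T)$ from Lemma \ref{2dbu-a1.9} and Corollary \ref{2dbu-C3.3}. A doubly-exponential Gr\"onwall argument (standard in Hoff- and Zhang-type analyses, cf.\ \cite{Z}) then yields $\sup_{t\in[0,T]}g(t)\leq K$, which is (\ref{2dbu-q1.98}).
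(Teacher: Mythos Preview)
Your strategy---transport differential inequality for $g(t)=\|\nabla m\|_{L^{q_1}}+\|\nabla n\|_{L^{q_1}}$ closed by a logarithmic Gr\"onwall---matches the paper's, but you organize the bound on $\|\nabla u\|_{L^\infty}$ differently. The paper splits $u=v+w$ via (\ref{2dbu-E3.10})--(\ref{2dbu-E3.12}): since $\|\nabla v\|_{BMO}\le C\|P-P(\tilde m,\tilde n)\|_{L^\infty\cap L^2}\le C$ \emph{uniformly}, the logarithmic inequality for $v$ reads $\|\nabla v\|_{L^\infty}\le C\bigl(1+\ln(e+g)\bigr)$ with a \emph{constant} pre-log factor, while $\|\nabla w\|_{L^\infty}\le C\|\nabla w\|_{W^{1,q_1}}\in L^2(0,T)$ by Sobolev embedding and Corollary \ref{2dbu-C3.3}. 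This yields the clean inequality
\[
g'\le K\bigl(1+\|\nabla w\|_{W^{1,q_1}}+\ln(e+g)\bigr)g+K\|\nabla^2 w\|_{L^{q_1}},
\]
to which the standard log-Gr\"onwall applies directly. Lemma \ref{2dbu-a1.7} is not used at all in this step.

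Your $F$--$\omega$ route instead produces a time-dependent pre-log factor $\phi(t)\sim 1+\|F\|_{L^\infty}+\|\omega\|_{L^\infty}$ and, crucially, leaves $\|\dot u\|_{H^1}$ \emph{inside} the logarithm. This creates a small gap: splitting $\log(e+g+\|\dot u\|_{H^1})\le\log(e+g)+\log(e+\|\dot u\|_{H^1})$ and passing to $h=\log(e+g)$, one needs $\int_0^T\phi(s)\log\bigl(e+\|\dot u(s)\|_{H^1}\bigr)\,ds<\infty$. With only $\phi\in L^1$ from (\ref{2dbu-b1.20}) this does not follow, since $\log(e+\|\dot u\|_{H^1})$ is merely in $L^2$, not $L^\infty$. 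The fix is easy and already implicit in the lemmas you cite: by (\ref{2dbu-b1.7}), Sobolev embedding ($q_1>3$), and Lemma \ref{2dbu-a1.9} one has
\[
\|F\|_{L^\infty}+\|\omega\|_{L^\infty}\le C\bigl(\|F\|_{L^2}+\|\omega\|_{L^2}+\|m\dot u\|_{L^{q_1}}\bigr)\le C\bigl(1+\|\dot u\|_{H^1}\bigr)\in L^2(0,T),
\]
so in fact $\phi\in L^2$, and then Cauchy--Schwarz closes the integral. You should invoke Lemma \ref{2dbu-a1.9} (or Corollary \ref{2dbu-C3.3}) for $\phi$ rather than Lemma \ref{2dbu-a1.7}; once you do, your argument goes through, and the paper's $v$--$w$ decomposition simply amounts to a cleaner bookkeeping that isolates the constant pre-log factor from the outset.
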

 \begin{proof}
Differentiating the equation (\ref{2dbu-E1.1})$_1$ with respect to
$x_i$, then multiplying   both sides of the resulting equation by
$q_1|\partial_i m|^{q_1-2}\partial_im$, we get
\begin{eqnarray*}
&&\partial_t|\partial_i m|^{q_1} +\mathrm{div}(|\partial_i m|^{q_1} u)+(q_1-1)|\partial_i m|^{q_1}\mathrm{div} u\nonumber\vspace{.2cm} \\
&&+ {q_1} m|\partial_i m|^{{q_1}-2}\partial_im\partial_i\mathrm{div}
u+{q_1}|\partial_i m|^{{q_1}-2}\partial_im\partial_iu\cdot\nabla
m=0.
\end{eqnarray*}
Integrating the above equality over $\mathbb{R}^3$, we obtain
\begin{eqnarray}\label{2dbu-e4.3}
&&\frac{d}{dt}\int|\nabla m|^{q_1} dx\nonumber\vspace{.2cm} \\
&\leq &  K\int |\nabla u||\nabla m|^{q_1}dx+{q_1}\int m|\nabla \mathrm{div}u||\nabla m|^{{q_1}-1}dx\nonumber\vspace{.2cm} \\
&\leq & K\|\nabla u\|_{L^\infty}\|\nabla
m\|_{L^{q_1}}^{q_1}+K\|\nabla^2 u\|_{L^{q_1}}\|\nabla
m\|_{L^{q_1}}^{{q_1}-1}.
\end{eqnarray}
Similarly, we get
\begin{eqnarray}\label{2dbu-e4.4}
&&\frac{d}{dt}\int|\nabla n|^{q_1} dx\nonumber\vspace{.2cm} \\
&\leq & K\int |\nabla u||\nabla n|^{q_1}dx+{q_1}\int n|\nabla \mathrm{div}u||\nabla n|^{{q_1}-1}dx\nonumber\vspace{.2cm} \\
&\leq & K\|\nabla u\|_{L^\infty}\|\nabla
n\|_{L^{q_1}}^{q_1}+K\|\nabla^2 u\|_{L^{q_1}}\|\nabla
n\|_{L^{q_1}}^{{q_1}-1}.
\end{eqnarray}
From (\ref{2dbu-E3.11}), we obtain
\begin{equation}\label{2dbu-e4.5}
\|\nabla^2 v\|_{L^{q_1}}\leq K(\|\nabla m\|_{L^{q_1}}+\|\nabla
n\|_{L^{q_1}}),
\end{equation}
then we get
\begin{eqnarray}\label{2dbu-e4.6}
 \|\nabla v\|_{L^\infty}
&\leq &
K\Big(1+\|\nabla v\|_{BMO}\ln(e+\|\nabla^2v\|_{L^{q_1}})\Big)\nonumber\vspace{.2cm} \\
&\leq &K\Big(1+\|P\|_{L^{\infty}\cap L^2}\ln(e+\|\nabla P\|_{L^{q_1}})\Big)\nonumber\vspace{.2cm} \\
&\leq &K\Big(1+\ln(e+\|\nabla m\|_{L^{q_1}}+\|\nabla
n\|_{L^{q_1}})\Big),
\end{eqnarray}
where the first inequality could be found in \cite{Sun-Wang-Zhang}.

From (\ref{2dbu-e4.4})-(\ref{2dbu-e4.6}), we get
\begin{eqnarray*}\begin{split}
&\frac{d}{dt}\left(\|\nabla m\|_{L^{q_1}}+\|\nabla n\|_{L^{q_1}}\right)\vspace{.2cm} \\
\leq & K(1+\|\nabla w\|_{L^\infty}+\|\nabla v\|_{L^\infty})(\|\nabla
m\|_{L^{q_1}}
+\|\nabla n\|_{L^{q_1}})+C\|\nabla^2 w\|_{L^{q_1}}\vspace{.2cm} \\
\leq & K\left(1+\|\nabla w\|_{W^{1, {q_1}}}+\ln(e+\|\nabla
m\|_{L^{q_1}}
+\|\nabla n\|_{L^{q_1}})\right)(\|\nabla m\|_{L^{q_1}}+\|\nabla n\|_{L^{q_1}})\vspace{.2cm} \\
&+K\|\nabla^2 w\|_{L^{q_1}}.\end{split}
\end{eqnarray*}
Note that   $\|\nabla w\|_{W^{1, {q_1}}}\in L^2 (0,T)$ by  Corollary
\ref{2dbu-C3.3}. Then by the Gronwall's inequality, we obtain
(\ref{2dbu-q1.98}). This completes the proof of Lemma
\ref{2dbu-a1.10}.
\end{proof}

\begin{cor}\label{2dbu-C3.4}
Under the conditions of Theorem 1.1, it holds that
 \begin{equation}\label{2dbu-t1.78}
\int_0^T\|\nabla u\|_{L^{\infty}}^2ds \leq K.\ \ \ \
\end{equation}
 \end{cor}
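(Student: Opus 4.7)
The plan is to use the Hoff--Danchin style decomposition $u=w+v$ introduced just before Lemma \ref{2dbu-a1.9}, and to bound $\|\nabla w\|_{L^\infty}$ and $\|\nabla v\|_{L^\infty}$ separately. The key observation is that all the hard work has already been done: the Lam\'e-type elliptic estimates in (\ref{2dbu-E3.11}), together with the $L^\infty_t L^{q_1}_x$ bound on $\nabla m, \nabla n$ from Lemma \ref{2dbu-a1.10} and the $L^2_t W^{2,l_1}_x$ bound on $w$ from Corollary \ref{2dbu-C3.3}, are almost exactly what is needed to pass from $L^{q_1}$ norms to the $L^\infty$ norm of $\nabla u$ via Sobolev embedding.

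For the $w$-piece, fix any $l_1\in(3,6]$ so that the Sobolev embedding $W^{1,l_1}(\mathbb{R}^3)\hookrightarrow L^\infty(\mathbb{R}^3)$ holds. Then
\begin{equation*}
\|\nabla w\|_{L^\infty}^2\leq C\|\nabla w\|_{W^{1,l_1}}^2,
\end{equation*}
and integrating in time and applying Corollary \ref{2dbu-C3.3} yields $\int_0^T\|\nabla w\|_{L^\infty}^2\,ds\leq K$ immediately.

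For the $v$-piece, I combine (\ref{2dbu-E3.11}) with Lemma \ref{2dbu-a1.10}. Pick $q_1\in(3,6]$. The bounds on $m,n$ from Step 1 together with Lemma \ref{2dbu-a1.1} imply that $P(m,n)-P(\tilde{m},\tilde{n})$ lies in $L^2\cap L^\infty$ uniformly on $[0,T]$, and hence in $L^{q_1}$. The first inequality in (\ref{2dbu-E3.11}) then gives $\|\nabla v\|_{L^{q_1}}\leq K$. The second inequality, combined with Lemma \ref{2dbu-a1.10}, gives
\begin{equation*}
\|\nabla^2 v\|_{L^{q_1}}\leq C\|\nabla P(m,n)\|_{L^{q_1}}\leq K(\|\nabla m\|_{L^{q_1}}+\|\nabla n\|_{L^{q_1}})\leq K.
\end{equation*}
Thus $\|\nabla v\|_{W^{1,q_1}}\leq K$ pointwise on $[0,T]$, and Sobolev embedding (since $q_1>3$) yields $\|\nabla v\|_{L^\infty}\leq K$ uniformly in $t$. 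Since $T\leq T^*<\infty$ by the working assumption of Step 2, integration in time gives $\int_0^T\|\nabla v\|_{L^\infty}^2\,ds\leq KT^*\leq K$.

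Combining the two pieces through $\|\nabla u\|_{L^\infty}^2\leq 2(\|\nabla w\|_{L^\infty}^2+\|\nabla v\|_{L^\infty}^2)$ and integrating in $t$ completes the proof. There is essentially no genuine obstacle here: the only delicate point is that one must not try to control $\|\nabla v\|_{L^\infty}$ by a Beale--Kato--Majda logarithmic inequality (as was unavoidable inside the Gronwall argument of Lemma \ref{2dbu-a1.10}), since at this stage the $L^{q_1}$ bounds on $\nabla m,\nabla n$ are already at our disposal and a plain Sobolev embedding suffices.
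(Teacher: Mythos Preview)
Your proof is correct and follows the same overall strategy as the paper: split $u=w+v$, control $\|\nabla w\|_{L^\infty}$ via Sobolev embedding $W^{1,l_1}\hookrightarrow L^\infty$ together with Corollary~\ref{2dbu-C3.3}, and then bound $\|\nabla v\|_{L^\infty}$ uniformly in $t$. The only difference is in how the $v$-piece is handled. The paper simply reuses the logarithmic estimate (\ref{2dbu-e4.6}),
\[
\|\nabla v\|_{L^\infty}\le K\bigl(1+\ln(e+\|\nabla m\|_{L^{q_1}}+\|\nabla n\|_{L^{q_1}})\bigr),
\]
which, once Lemma~\ref{2dbu-a1.10} is in hand, immediately gives $\|\nabla v\|_{L^\infty}\le K$. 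You instead bound $\|\nabla v\|_{W^{1,q_1}}$ directly from the two elliptic estimates in (\ref{2dbu-E3.11}) and then apply Sobolev embedding. Both routes are valid; the paper's is marginally shorter because (\ref{2dbu-e4.6}) is already written down, while yours is a bit more self-contained and avoids the BMO machinery at this stage. Your closing remark that one ``must not'' use the logarithmic inequality here is slightly overstated: the paper does use it, just not inside a Gronwall loop, and that is perfectly fine once the $L^{q_1}$ bounds on $\nabla m,\nabla n$ are available.
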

 \begin{proof}
From (\ref{2dbu-t1.79}), (\ref{2dbu-q1.98}), (\ref{2dbu-e4.6}) and Sobolev's embedding theorem,  we have
\begin{equation}\label{{2dbu-t1.2}}
\begin{array}[b]{ll}
\D\int_0^T\|\nabla u\|_{L^{\infty}}^2ds&\leq\D \int_0^T\|\nabla w \|_{L^\infty}^2+\|\nabla v \|_{L^\infty}^2ds\vspace{.2cm} \\
&\D\leq K\int_0^T\|\nabla w \|_{W^{1,{q_1}}}^2ds+K\vspace{.2cm} \\
&\D\leq K.
\end{array}
\end{equation}
This completes the proof of Corollary \ref{2dbu-C3.4}.
\end{proof}

\begin{lem}\label{2dbu-a1.11}
Under the conditions of Theorem 1.1, it holds that
\begin{equation}\label{2dbu-q1.99}
\sup\limits_{t\in[0,T]}(\|\nabla m(\cdot,t)\|_{L^{2}}+\|\nabla
n(\cdot,t)\|_{L^{2}})\leq K.\ \ \ \ \
\end{equation}
\end{lem}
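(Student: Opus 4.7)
The plan is to mimic the argument of Lemma \ref{2dbu-a1.10} but taking the exponent to be $2$, and to absorb the extra term coming from $\|\nabla^2 u\|_{L^2}$ by decomposing $u=w+v$ and using the estimates already established in Corollaries \ref{2dbu-C3.3} and \ref{2dbu-C3.4}.

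Concretely, I would first differentiate the mass equation $(1.1)_1$ in $x_i$, multiply by $2\partial_i m$, sum in $i$, and integrate over $\mathbb{R}^3$. After integration by parts, using $\underline{m}_1\le m\le\overline{m}_1$ from (\ref{2dbu-b1.23}), this yields
\begin{equation*}
\frac{d}{dt}\|\nabla m\|_{L^2}^2\le C\|\nabla u\|_{L^\infty}\|\nabla m\|_{L^2}^2+C\|\nabla^2 u\|_{L^2}\|\nabla m\|_{L^2}.
\end{equation*}
An identical computation for $n$ from $(1.1)_2$ produces the analogous bound with $\|\nabla n\|_{L^2}$.

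Next, I would split $\|\nabla^2 u\|_{L^2}\le\|\nabla^2 w\|_{L^2}+\|\nabla^2 v\|_{L^2}$. By the elliptic estimate (\ref{2dbu-E3.11}) with $p=2$, together with the uniform bound on $P_m,P_n$ (from Remark \ref{2dbu-L1.2} and the bounds $\underline{m}_1\le m\le\overline{m}_1$, $\underline{s}_0\underline{m}_1\le n\le\overline{s}_0\overline{m}_1$),
\begin{equation*}
\|\nabla^2 v\|_{L^2}\le C\|\nabla P(m,n)\|_{L^2}\le C(\|\nabla m\|_{L^2}+\|\nabla n\|_{L^2}),
\end{equation*}
so this contribution may be absorbed into the Gronwall factor via Cauchy's inequality. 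The $w$ contribution is handled by Corollary \ref{2dbu-C3.3} with $l_1=2$, which gives $\int_0^T\|\nabla^2 w\|_{L^2}^2\,ds\le K$. Setting $\Phi(t)=\|\nabla m(\cdot,t)\|_{L^2}^2+\|\nabla n(\cdot,t)\|_{L^2}^2$, the above estimates combine to yield
\begin{equation*}
\frac{d}{dt}\Phi(t)\le C\bigl(1+\|\nabla u\|_{L^\infty}\bigr)\Phi(t)+C\|\nabla^2 w\|_{L^2}^2.
\end{equation*}

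Finally, Corollary \ref{2dbu-C3.4} gives $\int_0^T\|\nabla u\|_{L^\infty}^2\,ds\le K$, so by Cauchy--Schwarz $\int_0^T(1+\|\nabla u\|_{L^\infty})\,ds\le K(1+T^{1/2})$, and $\int_0^T\|\nabla^2 w\|_{L^2}^2\,ds\le K$. Gronwall's inequality applied to the displayed differential inequality, together with the initial bound $\Phi(0)\le K$ coming from $m_0-\tilde m,n_0-\tilde n\in H^3$, closes the estimate and yields (\ref{2dbu-q1.99}). No serious obstacle is anticipated; the only point requiring a little care is making sure the $\|\nabla^2 v\|_{L^2}$ term is absorbed with the right constant so that Gronwall can be applied after summing the $m$ and $n$ inequalities.
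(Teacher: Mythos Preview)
Your proposal is correct and follows essentially the same route as the paper: differentiate the mass equations, split $\nabla^2 u$ via $u=w+v$, absorb $\|\nabla^2 v\|_{L^2}\le C(\|\nabla m\|_{L^2}+\|\nabla n\|_{L^2})$ into the Gronwall factor, and close with Corollary~\ref{2dbu-C3.3} and Gronwall. The only cosmetic difference is that the paper handles the coefficient $\|\nabla u\|_{L^\infty}$ by decomposing it inline as $\|\nabla w\|_{L^\infty}+\|\nabla v\|_{L^\infty}$ (bounding the first by $\|\nabla w\|_{W^{1,q_1}}$ and the second by a constant via Lemma~\ref{2dbu-a1.10}), whereas you invoke the packaged Corollary~\ref{2dbu-C3.4} directly---same content.
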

 \begin{proof}
Differentiating the equation (\ref{2dbu-E1.1})$_1$ with respect to
$x_i$, then multiplying  both sides of the resulting equation by
$2\partial_im$, we get
\begin{eqnarray*}
&&\partial_t|\partial_i m|^2 +\mathrm{div}(|\partial_i m|^2 u)+|\partial_i m|^2\mathrm{div} u\nonumber\vspace{.2cm} \\
&&+ 2 m\partial_im\partial_i\mathrm{div} u+2\partial_im\partial_iu\cdot\nabla m=0.
\end{eqnarray*}
Integrating the above equality over ${\mathbb{R}^3}$, we obtain
\begin{eqnarray}\label{2dbu-E4.3}
&&\frac{d}{dt}\int|\nabla m|^2 dx\nonumber\vspace{.2cm} \\
&\leq & K\int |\nabla u||\nabla m|^2dx+2\int m|\nabla \mathrm{div}u||\nabla m|dx\nonumber\vspace{.2cm} \\
&\leq & K\|\nabla u\|_{L^\infty}\|\nabla m\|_{L^2}^2+K\|\nabla^2
u\|_{L^2}\|\nabla m\|_{L^2}.
\end{eqnarray}
Similarly,
\begin{eqnarray}\label{2dbu-E4.4}
&&\frac{d}{dt}\int|\nabla n|^2 dx\nonumber\vspace{.2cm} \\
&\leq & K\int|\nabla u||\nabla n|^2dx+2\int n|\nabla \mathrm{div}u||\nabla n|dx\nonumber\vspace{.2cm} \\
&\leq & K\|\nabla u\|_{L^\infty}\|\nabla n\|_{L^2}^2+K\|\nabla^2
u\|_{L^2}\|\nabla n\|_{L^2}.
\end{eqnarray}
From (\ref{2dbu-E3.11}), we obtain
\begin{equation}\label{2dbu-E4.5}
\|\nabla^2 v\|_{L^2}\leq K(\|\nabla m\|_{L^2}+\|\nabla n\|_{L^2}).
\end{equation}
From (\ref{2dbu-e4.6}) and (\ref{2dbu-q1.98}), we get
\begin{equation}\label{2dbu-E4.6}
 \|\nabla v\|_{L^\infty}\leq K.
\end{equation}
From (\ref{2dbu-E4.3})-(\ref{2dbu-E4.6}), we get
\begin{eqnarray*}
&&\frac{d}{dt}\left(\|\nabla m\|_{L^2}+\|\nabla n\|_{L^2}\right)\nonumber\vspace{.2cm} \\
&\leq & K(1+\|\nabla w\|_{L^\infty})(\|\nabla m\|_{L^2}+\|\nabla
n\|_{L^2})
+K\|\nabla^2 w\|_{L^2}\nonumber\vspace{.2cm} \\
&\leq & K\left(1+\|\nabla w\|_{W^{1, q_1}}\right)(\|\nabla m\|_{L^2}+\|\nabla n\|_{L^2})\nonumber\vspace{.2cm} \\
&&+K\|\nabla^2 w\|_{L^2}.
\end{eqnarray*}
Note that   $\|\nabla w\|_{W^{1, 2}}$, $\|\nabla w\|_{W^{1, q_1}}\in
L^2 (0,T)$ by  Corollary \ref{2dbu-C3.3}. Then by the Gronwall's
inequality, we obtain (\ref{2dbu-q1.99}). This completes the proof
of Lemma \ref{2dbu-a1.11}.
\end{proof}

\begin{cor}\label{2dbu-C3.2}
 Under the conditions of Theorem 1.1, it holds that
 \begin{equation}\label{2dbu-b1.79}
\sup\limits_{t\in[0,T]}(\| u\|_{L^\infty}+\| u\|_{H^2}) \leq K.
\end{equation}
 \end{cor}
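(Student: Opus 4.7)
The plan is to split $u=w+v$ and bound each piece separately in $H^2$, then invoke Sobolev embedding for $L^\infty$. All $L^2$ and $\dot H^1$ control of $u$ has already been established in Step 1; what remains is $\|\nabla^2 u\|_{L^2}$, and here the decomposition through the elliptic problems \eqref{2dbu-E3.10} and \eqref{2dbu-E3.12} is tailor-made.

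First I would control $v$. By the second estimate of \eqref{2dbu-E3.11} with $p=2$ together with the chain rule $\nabla P(m,n)=P_m\nabla m+P_n\nabla n$ and the fact that $P_m,P_n$ are bounded continuous functions on the compact set determined by the density bounds $\underline{m}_1\le m\le\overline{m}_1$ and $m/C\le n\le Cm$ (Lemmas \ref{2dbu-a1.0} and \ref{2dbu-a1.8}), I get
\begin{equation*}
\|\nabla^2 v\|_{L^2}\le C\|\nabla P(m,n)\|_{L^2}\le K(\|\nabla m\|_{L^2}+\|\nabla n\|_{L^2})\le K,
\end{equation*}
where the last bound is Lemma \ref{2dbu-a1.11}.

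Next I would control $w$. Standard $L^2$ elliptic theory applied to \eqref{2dbu-E3.12} gives $\|\nabla^2 w\|_{L^2}\le C\|m\dot u\|_{L^2}$, and since $m\le\overline{m}_1$, this is bounded by $C\sqrt{\overline{m}_1}\,\|\sqrt{m}\dot u\|_{L^2}$, which is controlled uniformly in $t$ by Lemma \ref{2dbu-a1.9}. Hence $\sup_{[0,T]}\|\nabla^2 w\|_{L^2}\le K$, and combining with the bound for $v$ yields $\sup_{[0,T]}\|\nabla^2 u\|_{L^2}\le K$.

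Finally I would assemble: from Lemma \ref{2dbu-a1.1} we already have $\sup_{[0,T]}\|u\|_{L^2}^2\le CE_0$, and from Lemma \ref{2dbu-a1.6} (which gives $\sup_{[0,T_0]}\|\nabla u\|_{L^2}^2\le C(M)$, with $T_0=T$ after Step 1), we have $\sup_{[0,T]}\|\nabla u\|_{L^2}\le K$. Together with the $\|\nabla^2 u\|_{L^2}$ bound, this gives $\sup_{[0,T]}\|u\|_{H^2}\le K$. The Sobolev embedding $H^2(\mathbb{R}^3)\hookrightarrow L^\infty(\mathbb{R}^3)$ then yields $\sup_{[0,T]}\|u\|_{L^\infty}\le C\|u\|_{H^2}\le K$, completing the proof. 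There is no real obstacle here: the corollary is a clean consequence of the decomposition $u=w+v$, and the only mild subtlety is noting that the positive lower bound on $m$ (and hence on $n$, via Lemma \ref{2dbu-a1.0}) is needed to convert the $\sqrt{m}\dot u$-type weighted estimate from Lemma \ref{2dbu-a1.9} into the unweighted $L^2$-estimate of $m\dot u$ required to drive the elliptic theory for $w$.
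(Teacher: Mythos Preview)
Your proof is correct and uses the same ingredients as the paper (Lemma~\ref{2dbu-a1.9} for the $m\dot u$ control, Lemma~\ref{2dbu-a1.11} for $\|\nabla m\|_{L^2}+\|\nabla n\|_{L^2}$, and Sobolev embedding), but it routes through a different decomposition. The paper's own proof works instead with the effective viscous flux $F$ and the vorticity $\omega$: from \eqref{2dbu-b1.7} and Lemma~\ref{2dbu-a1.9} it gets $\|\nabla F\|_{L^2}+\|\nabla\omega\|_{L^2}\le K$, and then uses the identity \eqref{2dbu-w1.55} to bound $\|u\|_{H^2}$ by $\|u\|_{L^2}+\|\nabla F\|_{L^2}+\|\nabla P\|_{L^2}+\|\nabla\omega\|_{L^2}$. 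Your $u=w+v$ splitting via \eqref{2dbu-E3.10}--\eqref{2dbu-E3.12} is the elliptic-systems counterpart of the $(F,\omega)$ approach: $\nabla^2 w$ plays the role of $\nabla F,\nabla\omega$ (both controlled by $\|m\dot u\|_{L^2}$), and $\nabla^2 v$ plays the role of $\nabla P$. The two arguments are essentially interchangeable here; neither is more elementary, and both are already set up in the paper.
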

 \begin{proof}
From (\ref{2dbu-b1.7}) and (\ref{2dbu-b1.80}), we have
\begin{equation}
\begin{array}[b]{ll}\label{2dbu-b1.40}
\|\nabla F\|_{L^2}+\|\nabla \omega\|_{L^2}
&\leq K\|m \dot{u}\|_{L^2}\vspace{.2cm} \\
&\leq K.
\end{array}
\end{equation}
From (\ref{2dbu-w1.55}), (\ref{2dbu-q1.1}), (\ref{2dbu-q1.99}) and
(\ref{2dbu-b1.40}), we have
\begin{equation}\label{2dbu-b1.73}
\begin{array}[b]{ll}
\|u\|_{H^2}&\leq K(\|u\|_{L^2}+\|\nabla F\|_{L^2}+\|\nabla P\|_{L^2}+\|\nabla \omega\|_{L^2})\vspace{.2cm} \\
&\leq K(\|u\|_{L^2}+\|\nabla F\|_{L^2}+\|\nabla m\|_{L^2}+\|\nabla n\|_{L^2}+\|\nabla \omega\|_{L^2})\vspace{.2cm} \\
&\leq K.
\end{array}
\end{equation}
Then, from Sobolev's embedding theorem, we finish this proof of
Corollary \ref{2dbu-C3.2}.
 \end{proof}

\begin{lem}\label{2dbu-a1.12}
Under the conditions of Theorem 1.1, it holds that
\begin{equation}\label{2dbu-b1.19}
\sup\limits_{t\in[0,T]}\| m-\tilde{m}\|_{H^2}+\|
n-\tilde{n}\|_{H^2}\leq K.
\end{equation}
\begin{proof}
From (\ref{2dbu-E1.1})$_1$ and (\ref{2dbu-w1.5}), we have
\begin{equation}\label{2dbu-b1.20}
\partial_t\Lambda_1(m(x,t))+u\cdot\nabla\Lambda_1+P\left(m(x,t),n(x,t)\right)-P(\tilde{m},\tilde{n})=-F(x,t).
\end{equation}
where $\Lambda_1$ satisfies that $\Lambda_1(\tilde{m})=0$ and
$\Lambda_1'(m)=\frac{2\mu+\lambda}{m}>0$. Similarly, from
(\ref{2dbu-E1.1})$_2$ and (\ref{2dbu-w1.5}), we have
\begin{equation}\label{2dbu-b1.222}
\partial_t\Lambda_2(n(x,t))+u\cdot\nabla\Lambda_2+P\left(m(x,t),n(x,t)\right)-P(\tilde{m},\tilde{n})=-F(x,t).
\end{equation}
where $\Lambda_2$ satisfies that $\Lambda_2(\tilde{n})=0$ and
$\Lambda_2'(n)=\frac{2\mu+\lambda}{n}>0$.

 Differentiating
(\ref{2dbu-b1.20}) with respect to $x_i$ and $x_j$,
 multiplying both sides of the resulting equation by
$\partial_i\partial_j\Lambda_1(m)$, integrating the result equality
over $\mathbb{R}^3$, we obtain
\begin{equation}
\frac{1}{2}\frac{d}{dt}\int
|\partial_i\partial_j\Lambda_1(m)|^2dx\leq\int
\left(|\partial_i\partial_j\Lambda_1(m)||\partial_i\partial_jF|
+|\partial_i\partial_j\Lambda_1(m)||\partial_i\partial_jP|+|\partial_i\partial_j\Lambda_1(m)||\partial_i\partial_j(u\cdot\nabla\Lambda_1)|\right)dx.
\end{equation}
Using the Cauchy inequality and the Gagliardo-Nirenberg inequality,
we have
\begin{equation}\label{2dbu-b1.21}
\|\Lambda_1(t)\|_{H^2}^2\leq\|\Lambda_1(0)\|_{H^2}^2+K\int_0^t\left((\|F\|_{H^2}+\|\Lambda_2\|_{H^2})\|\Lambda_1\|_{H^2}+(
1+\|\nabla
u\|_{L^\infty}+\|u\|_{H^2})\|\Lambda_1\|_{H^2}^2\right)ds.
\end{equation}
Similar to (\ref{2dbu-b1.21}),  from  (\ref{2dbu-b1.222}), we have
\begin{equation}\label{2dbu-b1.211}
\|\Lambda_2(t)\|_{H^2}^2\leq\|\Lambda_2(0)\|_{H^2}^2+K\int_0^t\left((\|F\|_{H^2}+\|\Lambda_1\|_{H^2})\|\Lambda_2\|_{H^2}+(
1+\|\nabla
u\|_{L^\infty}+\|u\|_{H^2})\|\Lambda_2\|_{H^2}^2\right)ds.
\end{equation}
By (\ref{2dbu-c1.5}), (\ref{2dbu-q1.1}), (\ref{2dbu-b1.16}),
(\ref{2dbu-b1.23}), (\ref{2dbu-q1.99}), H\"older inequality and  the
Gagliardo-Nirenberg inequality,
 we get
\begin{equation}\label{2dbu-b1.223}
\begin{array}[b]{ll}
\|F\|_{H^2}&\leq K\left(\|F\|_{L^2}+\|\nabla m \dot{u}\|_{L^2}+\| m \nabla\dot{u}\|_{L^2}\right)\vspace{.2cm} \\
&\leq K\left(1+\|\nabla m \|_{L^3}\|\dot{u}\|_{L^6}+\| \nabla\dot{u}\|_{L^2}\right)\vspace{.2cm} \\
&\leq K\left(1+\|\nabla m \|_{L^2}^{\frac{1}{2}}\|\nabla^2 m
 \|_{L^2}^{\frac{1}{2}}\|\nabla\dot{u}\|_{L^2}+\| \nabla\dot{u}\|_{L^2}\right)\vspace{.2cm} \\
&\leq
K\left(1+\|\Lambda_1\|_{H^2}(1+\|\nabla\dot{u}\|_{L^2})+\|\nabla\dot{u}\|_{L^2}\right)
.
\end{array}
\end{equation}
Thus, from (\ref{2dbu-b1.80}), (\ref{2dbu-t1.78}),
(\ref{2dbu-b1.79}),
 (\ref{2dbu-b1.21}), (\ref{2dbu-b1.211}),
and (\ref{2dbu-b1.223}),  we have
\begin{equation}\label{2dbu-b1.224}
\begin{array}[b]{ll}
&\ \ \ \|\Lambda_1(t)\|_{H^2}^2+\|\Lambda_2(t)\|_{H^2}^2
\vspace{.2cm}\\
&\D\leq
\|\Lambda_1(0)\|_{H^2}^2+\|\Lambda_2(0)\|_{H^2}^2+K\int_0^t((1+\|\nabla
u\|_{L^\infty}+\|\nabla\dot{u}\|_{L^2}+\|u\|_{H^2})(\|\Lambda_1\|_{H^2}^2+\|\Lambda_2\|_{H^2}^2)+
\|\nabla\dot{u}\|_{L^2}^2)ds\vspace{.2cm} \\
&\leq K+K\D\int_0^t \left(1+\|\nabla
u\|_{L^\infty}+\|\nabla\dot{u}\|_{L^2}\right)\left(\|\Lambda_1\|_{H^2}^2+\|\Lambda_2\|_{H^2}^2\right)ds.
\end{array}
\end{equation}
From (\ref{2dbu-b1.23}), (\ref{2dbu-q1.99}), and using the
Gronwall's inequality, we can immediately obtain (\ref{2dbu-b1.19}).
\end{proof}
\end{lem}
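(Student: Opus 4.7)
The strategy is to rewrite the continuity equations in a form where the apparent loss of derivative from the pressure is compensated by the effective viscous flux $F$, following the Hoff--Sun--Wang--Zhang approach. Define $\Lambda_1$, $\Lambda_2$ by $\Lambda_1'(m)=(2\mu+\lambda)/m$, $\Lambda_1(\tilde m)=0$ and $\Lambda_2'(n)=(2\mu+\lambda)/n$, $\Lambda_2(\tilde n)=0$. Multiplying $(\ref{2dbu-E1.1})_1$ by $\Lambda_1'(m)$ gives $\partial_t\Lambda_1+u\cdot\nabla\Lambda_1+(2\mu+\lambda)\mathrm{div}\,u=0$, and combining with the definition (\ref{2dbu-w1.5}) of $F$ yields
\begin{equation*}
\partial_t \Lambda_1(m) + u\cdot\nabla\Lambda_1(m) + \bigl[P(m,n)-P(\tilde m,\tilde n)\bigr] = -F,
\end{equation*}
and similarly for $\Lambda_2(n)$. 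Since Lemma \ref{2dbu-a1.8} and Lemma \ref{2dbu-a1.0} provide uniform positive upper and lower bounds for $m$ and $n$ on $\mathbb{R}^3\times[0,T]$, the maps $m\mapsto\Lambda_1(m)$ and $n\mapsto\Lambda_2(n)$ are smooth diffeomorphisms onto their ranges, so controlling $\|\Lambda_1\|_{H^2}+\|\Lambda_2\|_{H^2}$ is equivalent to controlling $\|m-\tilde m\|_{H^2}+\|n-\tilde n\|_{H^2}$.

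Next I perform an $H^2$ energy estimate directly on the transport equations for $\Lambda_1$, $\Lambda_2$. Applying $\partial_i\partial_j$, multiplying by $\partial_i\partial_j\Lambda_1$, and integrating, the transport term is handled by integration by parts and product-rule expansions, while the term $\|P(m,n)-P(\tilde m,\tilde n)\|_{H^2}$ is dominated by $\|\Lambda_1\|_{H^2}+\|\Lambda_2\|_{H^2}$ via the chain rule (using smoothness of $P$ on the relevant compact range of $(m,n)$). This produces
\begin{equation*}
\tfrac{1}{2}\tfrac{d}{dt}\|\Lambda_1\|_{H^2}^2 \leq K\bigl(1+\|\nabla u\|_{L^\infty}+\|u\|_{H^2}\bigr)\|\Lambda_1\|_{H^2}^2 + K\bigl(\|F\|_{H^2}+\|\Lambda_2\|_{H^2}\bigr)\|\Lambda_1\|_{H^2},
\end{equation*}
and the analogous inequality for $\Lambda_2$. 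Summing reduces the problem to a good bound on $\|F\|_{H^2}$.

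The main obstacle is estimating $\|F\|_{H^2}$ without losing a derivative on $(m,n)$. From (\ref{2dbu-c1.5}) one has $\Delta F=\mathrm{div}(m\dot u)$, and standard $L^2$ elliptic regularity together with $F\to 0$ at infinity give
\begin{equation*}
\|F\|_{H^2}\leq K\bigl(\|F\|_{L^2}+\|\nabla(m\dot u)\|_{L^2}\bigr)\leq K\bigl(1+\|\nabla m\|_{L^3}\|\dot u\|_{L^6}+\|\nabla\dot u\|_{L^2}\bigr).
\end{equation*}
Interpolating $\|\nabla m\|_{L^3}\leq K\|\nabla m\|_{L^2}^{1/2}\|\nabla^2 m\|_{L^2}^{1/2}\leq K\|\Lambda_1\|_{H^2}^{1/2}$ via Lemma \ref{2dbu-a1.11}, and using the Sobolev embedding $\|\dot u\|_{L^6}\leq K\|\nabla\dot u\|_{L^2}$, I obtain
\begin{equation*}
\|F\|_{H^2}\leq K\bigl(1+\|\Lambda_1\|_{H^2}(1+\|\nabla\dot u\|_{L^2})+\|\nabla\dot u\|_{L^2}\bigr).
\end{equation*}

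Feeding this back into the $H^2$ energy inequality and using Cauchy's inequality to absorb crossed terms, I arrive at
\begin{equation*}
\|\Lambda_1\|_{H^2}^2+\|\Lambda_2\|_{H^2}^2 \leq K + K\int_0^t \bigl(1+\|\nabla u\|_{L^\infty}+\|\nabla\dot u\|_{L^2}\bigr)\bigl(\|\Lambda_1\|_{H^2}^2+\|\Lambda_2\|_{H^2}^2\bigr)\,ds.
\end{equation*}
The integrating factor is in $L^1(0,T)$ because $\|\nabla u\|_{L^\infty}\in L^2(0,T)$ by Corollary \ref{2dbu-C3.4} and $\|\nabla\dot u\|_{L^2}\in L^2(0,T)$ by Lemma \ref{2dbu-a1.9}, while $\|\Lambda_j(0)\|_{H^2}\leq K$ by the $H^3$ assumption on the initial data. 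Gronwall's inequality then closes the estimate, and reverting $\Lambda_j\leftrightarrow(m-\tilde m,n-\tilde n)$ via the uniform bounds on $m$ and $n$ yields the stated $H^2$ control.
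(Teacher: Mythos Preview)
Your proof is correct and follows essentially the same route as the paper: the same $\Lambda_1,\Lambda_2$ reformulation of the continuity equations, the same $H^2$ energy estimate on the resulting transport equations, the same elliptic bound $\|F\|_{H^2}\le K(1+\|\Lambda_1\|_{H^2}(1+\|\nabla\dot u\|_{L^2})+\|\nabla\dot u\|_{L^2})$ via $\Delta F=\mathrm{div}(m\dot u)$ and the interpolation $\|\nabla m\|_{L^3}\le K\|\nabla m\|_{L^2}^{1/2}\|\nabla^2 m\|_{L^2}^{1/2}$, and the same Gronwall closure using Corollary \ref{2dbu-C3.4} and Lemma \ref{2dbu-a1.9}.
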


\begin{lem}\label{2dbu-a1.13}
Under the conditions of Theorem 1.1, it holds that
\begin{equation}\label{2dbu-b6.7}
\int_0^T\| u\|_{H^3}^2dt\leq K.
\end{equation}
\end{lem}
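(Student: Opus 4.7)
The plan is to split $u = v + w$ as already introduced in Step 2 and estimate $\|u\|_{H^3}$ by the sum $\|v\|_{H^3} + \|w\|_{H^3}$, using the elliptic systems (\ref{2dbu-E3.10}) and (\ref{2dbu-E3.12}) together with the standard $L^p$-estimates. Combined with the uniform $H^2$-bound for $u$ from Corollary \ref{2dbu-C3.2}, it suffices to control $\int_0^T(\|\nabla^3 v\|_{L^2}^2 + \|\nabla^3 w\|_{L^2}^2)\,dt$.

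For the $v$-piece, differentiating (\ref{2dbu-E3.10}) once and applying the $L^2$-elliptic estimate gives $\|\nabla^3 v\|_{L^2} \leq K \|\nabla^2 P(m,n)\|_{L^2}$. Since $(m,n)$ stay in a fixed compact subset of $(0,\infty)^2$ by (\ref{2dbu-b1.23}) and Lemma \ref{2dbu-a1.0}, the derivatives of $P$ are smooth there, so $\nabla^2 P$ expands as a bounded-coefficient combination of $\nabla^2 m$, $\nabla^2 n$, $|\nabla m|^2$, $|\nabla n|^2$ and $\nabla m\cdot\nabla n$. Bounding the quadratic pieces by $\|\nabla m\|_{L^4}^2+\|\nabla n\|_{L^4}^2$ and interpolating between $L^2$ (Lemma \ref{2dbu-a1.11}) and $H^2$ (Lemma \ref{2dbu-a1.12}) yields $\|\nabla^3 v\|_{L^2} \leq K$ uniformly in $t\in[0,T]$, hence $\int_0^T\|\nabla^3 v\|_{L^2}^2\,dt \leq K T \leq K$.

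For the $w$-piece, differentiating (\ref{2dbu-E3.12}) once gives $\|\nabla^3 w\|_{L^2} \leq K\|\nabla(m\dot u)\|_{L^2} \leq K\big(\|\nabla m\|_{L^3}\|\dot u\|_{L^6} + \|m\,\nabla\dot u\|_{L^2}\big)$. By Gagliardo–Nirenberg and Lemmas \ref{2dbu-a1.11}–\ref{2dbu-a1.12}, $\|\nabla m\|_{L^3}$ is uniformly bounded; by Sobolev $\|\dot u\|_{L^6} \leq C\|\nabla\dot u\|_{L^2}$; and $m$ is uniformly bounded. Thus $\|\nabla^3 w\|_{L^2} \leq K\|\nabla\dot u\|_{L^2}$, and squaring and integrating in time produces
\begin{equation*}
\int_0^T\|\nabla^3 w\|_{L^2}^2\,dt \leq K\int_0^T\|\nabla\dot u\|_{L^2}^2\,dt \leq K
\end{equation*}
by Lemma \ref{2dbu-a1.9}.

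Putting the two pieces together with the uniform $H^2$-bound of $u$ gives $\int_0^T\|u\|_{H^3}^2\,dt \leq K$. There is no serious obstacle here; the only mildly delicate point is writing $\nabla^2 P$ in terms of $\nabla^2 m,\nabla^2 n$ and quadratic first-derivative terms, and ensuring every quadratic piece is absorbed by the $L^4$-interpolation between the $L^2$ and $H^2$ bounds on $\nabla m,\nabla n$. All other ingredients have already been established in Lemmas \ref{2dbu-a1.9}--\ref{2dbu-a1.12} and Corollary \ref{2dbu-C3.2}.
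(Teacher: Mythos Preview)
Your proposal is correct and is essentially the same argument as the paper's, just written through the $(v,w)$ decomposition instead of the equivalent $(F,\omega,P-\tilde P)$ variables: since $F=(\lambda+2\mu)\mathrm{div}\,w$, $\omega=\mathrm{curl}\,w$, and $(\lambda+2\mu)\mathrm{div}\,v=P-\tilde P$, your bounds $\|\nabla^3 v\|_{L^2}\le K$ and $\|\nabla^3 w\|_{L^2}\le K\|\nabla\dot u\|_{L^2}$ correspond exactly to the paper's bounds on $\|P-\tilde P\|_{H^2}$ and $\|F\|_{H^2}+\|\omega\|_{H^2}$, and both routes rest on the same inputs, namely Lemma~\ref{2dbu-a1.9} and Lemma~\ref{2dbu-a1.12}.
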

\begin{proof}
From (\ref{2dbu-c1.6}), (\ref{2dbu-b1.80}), (\ref{2dbu-b1.19}) and
(\ref{2dbu-b1.223}), we have
\begin{equation}\label{2dbu-b1.24}
\int_0^T(\| F\|_{H^2}^2+\| \omega\|_{H^2}^2)dt\leq K.
\end{equation}
From (\ref{2dbu-w1.55}), (\ref{2dbu-q1.1}), (\ref{2dbu-b1.19}) and (\ref{2dbu-b1.24}), we have
\begin{equation}\label{2dbu-b1.25}
\begin{array}[b]{ll}
\D\int_0^T\| u\|_{H^3}^2dt&\leq K\D\int_0^T\left(\| u\|_{L^2}+\|\nabla F\|_{H^1}+\| \nabla (P(m,n)-P(\tilde{m},\tilde{n}))\|_{H^1}+\|\nabla \omega\|_{H^1}\right)^2dt\vspace{.2cm} \\
&\leq K\D\int_0^T\left(\| u\|_{L^2}+\| F\|_{H^2}+\| m-\tilde{m}\|_{H^2}+\| n-\tilde{n}\|_{H^2}+\| \omega\|_{H^2}\right)^2dt\vspace{.2cm} \\
&\leq K.
\end{array}
\end{equation}

This completes the proof of Lemma \ref{2dbu-a1.13}.
\end{proof}

\begin{lem}\label{2dbu-a1.14}
Under the conditions of Theorem 1.1, it holds that
\begin{equation}\label{2dbu-b1.26}
\sup\limits_{t\in[0,T]}\int| \nabla\dot{u}|^2dx+\int_0^T\int|
\nabla^2\dot{u}|^2dxdt\leq K.
\end{equation}
\end{lem}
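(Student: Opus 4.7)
The plan is to run an energy estimate on $\dot u$ one order higher than Lemma \ref{2dbu-a1.9}. Applying $\partial_t+\mathrm{div}(u\,\cdot)$ to $(1.1)_3$ and using $(1.1)_1$ collapses the material terms to $m\ddot u^j$ and turns the viscous part into $\mu\Delta\dot u^j+(\mu+\lambda)\partial_j\mathrm{div}\dot u$ plus commutators of schematic form $\nabla u\cdot\nabla^2 u$, while the pressure contributes $-\partial_j\dot P$ with $\dot P=-(mP_m+nP_n)\mathrm{div}u$ via $(1.1)_{1,2}$. Thus
\begin{equation*}
m\ddot u^j=\mu\Delta\dot u^j+(\mu+\lambda)\partial_j\mathrm{div}\dot u+\mathcal{R}^j,
\end{equation*}
where $\mathcal{R}$ is bilinear in $\nabla u,\nabla^2 u$ and linear in $\nabla m,\nabla n$, with coefficients controlled by $\|m\|_{L^\infty}$, $\|n\|_{L^\infty}$, $P_m$ and $P_n$.

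I would multiply this identity by $\ddot u^j$ and integrate. The Laplacian-plus-$\ddot u^j$ pairing yields, after integrating by parts in the time/transport split $\ddot u=\partial_t\dot u+u\cdot\nabla\dot u$,
\begin{equation*}
-\mu\!\int\!\Delta\dot u^j\ddot u^j dx=\tfrac{\mu}{2}\tfrac{d}{dt}\!\int\!|\nabla\dot u|^2 dx+\mu\!\int\!\partial_k\dot u^j\partial_k u^\ell\partial_\ell\dot u^j dx-\tfrac{\mu}{2}\!\int\!\mathrm{div}u\,|\nabla\dot u|^2 dx,
\end{equation*}
and an analogous identity for the $(\mu+\lambda)\partial_j\mathrm{div}\dot u$ term. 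Hence
\begin{equation*}
\tfrac{d}{dt}\!\int\!\big(\tfrac{\mu}{2}|\nabla\dot u|^2+\tfrac{\mu+\lambda}{2}|\mathrm{div}\dot u|^2\big)dx+\int\!m|\ddot u|^2 dx\le K\|\nabla u\|_{L^\infty}\|\nabla\dot u\|_{L^2}^2+\int\mathcal{R}^j\ddot u^j dx.
\end{equation*}
The pairing $\int\mathcal{R}^j\ddot u^j dx$ is split by Cauchy--Schwarz into $\tfrac12\int m|\ddot u|^2 dx+K\|\mathcal{R}\|_{L^2}^2$; the latter is an $L^1(0,T)$ quantity thanks to Lemmas \ref{2dbu-a1.10}--\ref{2dbu-a1.12}, \ref{2dbu-a1.13} and Corollaries \ref{2dbu-C3.3}, \ref{2dbu-C3.4}, \ref{2dbu-C3.2}: $\|u\|_{H^2}$ and $\|(m,n)-(\tilde m,\tilde n)\|_{H^2}$ are uniformly bounded on $[0,T]$, while $\|\nabla u\|_{L^\infty}^2$ and $\|u\|_{H^3}^2$ lie in $L^1(0,T)$.

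Absorbing the $\tfrac12\int m|\ddot u|^2$ to the left and applying Gronwall with multiplier $1+\|\nabla u\|_{L^\infty}^2\in L^1(0,T)$ produces
\begin{equation*}
\sup_{t\in[0,T]}\!\int\!|\nabla\dot u|^2 dx+\int_0^T\!\!\!\int\!m|\ddot u|^2 dx\,dt\le K.
\end{equation*}
Finally, I view the equation of the first paragraph as a Lam\'e elliptic system for $\dot u$; the standard $L^2$ elliptic estimate gives $\|\nabla^2\dot u\|_{L^2}\le K(\|m\ddot u\|_{L^2}+\|\mathcal{R}\|_{L^2})$, and squaring and integrating in $t$, both pieces on the right are in $L^1(0,T)$ by the line just displayed and the $L^1_t$ control of $\|\mathcal{R}\|_{L^2}^2$, delivering the second half of \eqref{2dbu-b1.26}.

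The main obstacle is the cubic transport commutator $\int|\nabla u||\nabla\dot u|^2 dx$: without the integrability $\int_0^T\|\nabla u\|_{L^\infty}^2 dt<\infty$ from Corollary \ref{2dbu-C3.4}, Gronwall would not close, which is precisely why this estimate must be placed after the $H^2$ bounds on $(m,n,u)$. A secondary issue is handling the piece of $\nabla\dot P$ of the form $\nabla^2 u\cdot\nabla(m,n)$ inside $\mathcal{R}$; this is controlled by H\"older as $\|\nabla(m,n)\|_{L^3}\|\nabla^2 u\|_{L^6}$, which is in $L^2(0,T)$ via the embedding $H^2\hookrightarrow W^{1,3}$ on $(m,n)$ (Lemma \ref{2dbu-a1.12}) and the $L^2(0,T;H^3)$ bound on $u$ (Lemma \ref{2dbu-a1.13}), so every required ingredient is already available.
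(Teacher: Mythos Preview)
Your argument is correct but follows a genuinely different route from the paper's. The paper applies $\nabla[\partial_t+\mathrm{div}(u\,\cdot)]$ to $(1.1)_3$ and tests against $\nabla\dot u$, which yields the evolution of $\int m|\nabla\dot u|^2$ together with dissipation $\mu\int|\nabla^2\dot u|^2+(\lambda+\mu)\int|\nabla\tfrac{D}{Dt}\mathrm{div}u|^2$ in a single step; the price is a delicate term $I_2=-\int\nabla m\,\partial_t\dot u\,\nabla\dot u$ in which $\partial_t\dot u$ must be eliminated by substituting the momentum equation, and a separate bookkeeping of $\nabla\tfrac{D}{Dt}\mathrm{div}u$ in $I_2$ and $I_6$. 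You instead keep the operator at order zero, test against $\ddot u$, obtain $\sup_t\int|\nabla\dot u|^2+\int_0^T\!\int m|\ddot u|^2$ by Gronwall, and recover $\int_0^T\|\nabla^2\dot u\|_{L^2}^2$ \emph{a posteriori} from Lam\'e elliptic regularity. Your route is slightly more modular and produces the extra information $\int_0^T\!\int m|\ddot u|^2\le K$ for free; the paper's route is one step shorter and avoids invoking the elliptic estimate. Both lean on exactly the same prior inputs: $m\ge\underline m>0$, $\sup_t\|u\|_{H^2}$ and $\sup_t\|(m,n)-(\tilde m,\tilde n)\|_{H^2}$ bounded, and $\|\nabla u\|_{L^\infty}^2,\|u\|_{H^3}^2\in L^1(0,T)$, so neither argument needs anything the other does not.
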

\begin{proof}
We take the operator $\nabla\partial_t+\nabla\mathrm{div}(u\cdot)$
in $(1.1)_3$, multiplying the resulting equations by $\nabla \dot{u}$,  we obtain
\begin{equation}
\begin{array}[b]{cl}
&\ \ \
\nabla{\dot{u}}^j\nabla[\partial_t(m\dot{u}^j)+\mathrm{div}(um\dot{u}^j)]+
\nabla\dot{u}^j\nabla[\partial_j P_t+\mathrm{div}(u\partial_j
P)]\vspace{.2cm}
\\&=\mu\nabla{\dot{u}}^j\nabla[\partial_t\Delta
{u}^j+\mathrm{div}(u\Delta u^j)]
+(\lambda+\mu)\nabla\dot{u}^j\nabla[\partial_j\partial_t(\mathrm{div}u)+\mathrm{div}(u\partial_j(\mathrm{div}u))].
\end{array}
\end{equation}
Integrating the above equation over $\mathbb{R}^3$, and using
integration by parts, then we have
\begin{equation}\label{2dbu-b1.27}
\begin{array}[b]{ll}
\D\frac{1}{2}\int m|\nabla\dot{u}|^2dx &=\D \frac{1}{2}\int
m_0|\nabla\dot{u}_0|^2dx -\int_0^t\int\nabla
m\partial_t\dot{u}\nabla\dot{u}\,dxds-\int_0^t\int\nabla(mu_j)\partial_j\dot{u}\nabla\dot{u}\,dxds\vspace{.2cm}
\\\D&\ \ \
 -\int_0^t\int\nabla\dot{u}^j\nabla[\partial_j \partial_tP+\mathrm{div}(u\partial_j P)]\,dxds-\mu\int_0^t\int\Delta \dot{u}^j[\Delta u^j_t+\mathrm{div}(u\Delta u^j)]\,dxds\vspace{.2cm} \\
&\ \ \ \D-(\lambda+\mu)\int_0^t\int\Delta \dot{u}^j[\partial_j\partial_t(\mathrm{div}u)+\mathrm{div}(u\partial_j(\mathrm{div}u))]\}\,dxds\vspace{.2cm} \\
&:=\D\sum\limits_{i=1}^{6}I_i.
\end{array}
\end{equation}
From $m_0-\tilde{m}\in H^2$, and $u_0\in H^3$, we know
\begin{equation}
I_1=\frac{1}{2}\int m_0|\nabla\dot{u}_0|^2dx\leq K.
\end{equation}
From the integration by parts, the equation $(1.1)_3$,
(\ref{2dbu-b1.23}), (\ref{2dbu-b1.79}), (\ref{2dbu-b1.19}),
(\ref{2dbu-b6.7}), the H\"older inequality, the Cauchy inequality
and  the Gagliardo-Nirenberg inequality, we get
\begin{equation}
\begin{array}[b]{ll}\label{2dbu-b1.28}
I_2&=-\D\int_0^t \int\nabla m\partial_t\dot{u}\nabla\dot{u}dxds\vspace{.2cm} \\
&=-\D\int_0^t \int\nabla m\partial_t\left(\frac{\mu\Delta u+(\mu+\lambda)\nabla(\mathrm{div}u)-\nabla P}{m}\right)\nabla \dot{u}dxds\vspace{.2cm} \\
&\leq\D K\int_0^t\int|\nabla m||\nabla \dot{u}|\Big(|\nabla^2 u||\nabla u|+|\nabla u||\nabla m|+|\nabla u||\nabla n|+|\nabla m||\nabla n|+|\nabla^2 u||\nabla m|\vspace{.2cm} \\
&\ \ \ \D+|\nabla m|^2+|\nabla n|^2+|\nabla^3 u|+|\Delta \dot{u}|+|\nabla\frac{D}{Dt}\mathrm{div}u|\Big)dxds\vspace{.2cm} \\
&\leq\D K\int_0^t\{\|\nabla \dot{u}\|_{L^2}^{\frac{1}{2}}\|\nabla^2 \dot{u}\|_{L^2}^{\frac{1}{2}}(\|\nabla^2\dot{u}\|_{L^2}+\|\nabla\frac{D}{Dt}\mathrm{div}u\|_{L^2})\|m-\tilde{m} \|_{H^2}\vspace{.2cm} \\
&\ \ \ \D+\|\nabla^2 \dot{u}\|_{L^2}[(\|u\|_{H^2}^2+\|n\|_{H^2}^2+1)(\|\nabla {m}\|_{H^1}^3+1)+\|\nabla {m}\|_{H^1}\|u\|_{H^3}]\}ds\vspace{.2cm} \\
&\leq\D K+\frac{\mu}{10}\int_0^t(\|\nabla^2
\dot{u}\|_{L^2}^2+\|\nabla\frac{D}{Dt}\mathrm{div}u\|_{L^2}^2)ds.
\end{array}
\end{equation}
From the H\"older inequality, Gagliardo-Nirenberg inequality and the
Cauchy inequality, we obtain
\begin{equation}
\begin{array}[b]{ll}\label{2dbu-b1.29}
I_3&=\D-\int_0^t \int\nabla(mu^j)\partial_j \dot{u} \nabla\dot{u}\, dxds\vspace{.2cm} \\
&=\D-\int_0^t\int \nabla mu^j\partial_j\dot{u}\nabla\dot{u}+\nabla
u^j m
\partial_j\dot{u}\nabla\dot{u}\,dxds\vspace{.2cm} \\
&\leq\D K\int_0^t\int\|\nabla \dot{u}\|_{L^4}^2(\|\nabla {m}\|_{L^2}\|u\|_{L^\infty}+\|\nabla u\|_{L^2})dxds\vspace{.2cm} \\
&\leq\D    K\int_0^t\int\|\nabla \dot{u}\|_{L^2}^\frac{1}{2}\|\nabla^2 \dot{u}\|_{L^2}^\frac{3}{2}dxds\vspace{.2cm}\\
&\leq\D K+\frac{\mu}{10}\int_0^t\|\nabla^2 \dot{u}\|_{L^2}^2ds.
\end{array}
\end{equation}
From the integration by parts, the equation $(1.1)_1$, $(1.1)_2$,
(\ref{2dbu-b1.23}), (\ref{2dbu-b1.79}), (\ref{2dbu-b1.19}),
 the Cauchy inequality, we get
\begin{equation}
\begin{array}[b]{ll}\label{2dbu-b1.30}
I_4&=\D\int_0^t\int\Delta \dot{u}^j[\partial_j P_t+\mathrm{div}(\partial_j Pu)]dxds\vspace{.2cm} \\
&=\D-\int_0^t\int\left(\partial_j \Delta\dot{u}^j(P_mm_t+P_nn_t)+\partial_k \Delta\dot{u}^j\partial_j Pu^k\right)dxds\vspace{.2cm} \\
&=\D-\int_0^t\int\left([P_mm+P_nn]\mathrm{div}u\partial_j \Delta\dot{u}^j-\partial_k(\partial_j\Delta\dot{u}^j u^k)P+P\partial_j(\partial_k\Delta\dot{u}^j u^k)\right)dxds\vspace{.2cm} \\
&\leq\D K\left(\int_0^t\int(|\nabla^2u|+|\nabla m||\nabla u|+|\nabla n||\nabla u|)^2dxds\right)^\frac{1}{2}\left(\int_0^t\int|\nabla^2\dot{u}|^2dxds\right)^\frac{1}{2}\vspace{.2cm} \\
&\leq\D K+\frac{\mu}{10}\int_0^t\|\nabla^2 \dot{u}\|_{L^2}^2ds.
\end{array}
\end{equation}
From the integration by parts, the equation (\ref{2dbu-b1.23}),
(\ref{2dbu-b1.37}), (\ref{2dbu-b1.79}), (\ref{2dbu-b1.19}),
 the Cauchy inequality, we get
\begin{equation}
\begin{array}[b]{ll}\label{2dbu-b1.31}
I_5&=\D-\int_0^t\int\mu \Delta\dot{u}^j\left( \Delta{u}_t^j+\mathrm{div}(u\Delta u^j)\right)dxds\vspace{.2cm} \\
&=\D\int_0^t\int\mu\left(\partial_i\Delta\dot{u}^j\partial_i{u}_t^j+\Delta{u}^ju\cdot\nabla\Delta\dot{u}^j\right)dxds\vspace{.2cm} \\
&=\D\int_0^t\int\mu\left(\partial_i\Delta\dot{u}^j(\partial_i\dot{u}^j-\partial_i(u\cdot \nabla u^j))+\Delta{u}^ju\cdot\nabla\Delta\dot{u}^j\right)dxds\vspace{.2cm} \\
&=\D\int_0^t\int\mu\left(-|\nabla^2\dot{u}|^2-\partial_i\Delta\dot{u}^ju^k\partial_k\partial_i{u}_t^j
-\partial_i\Delta\dot{u}^j\partial_iu^k\partial_k{u}^j+\Delta{u}^ju\cdot\nabla\Delta\dot{u}^j\right)dxds\vspace{.2cm} \\
&=\D\int_0^t\int\mu\left(-|\nabla^2\dot{u}|^2+\partial_i\Delta\dot{u}^j
\mathrm{div} u\partial_i{u}^j
-\partial_i\Delta\dot{u}^j\partial_iu^k\partial_k{u}^j-\partial_i{u}^j\partial_iu^k\partial_k\Delta\dot{u}^j\right)dxds\vspace{.2cm} \\
&\leq\D -\frac{1}{2}\int_0^t\int\mu|\nabla^2\dot{u}|^2dxds+K\int_0^t\int|\nabla u|^4dxds\vspace{.2cm} \\
&\leq\D -\frac{1}{2}\int_0^t\int\mu|\nabla^2\dot{u}|^2dxds+K.
\end{array}
\end{equation}
From the integration by parts and the Cauchy inequality, we get
\begin{equation*}
\begin{array}[b]{ll}
I_6&=\D-(\lambda+\mu)\int_0^t\int\Delta{\dot{u}}^j\left(\partial_j\partial_t(\mathrm{div}u)
+\mathrm{div}(u\partial_j(\mathrm{div}u))\right)dxds\vspace{.2cm} \\
&=(\lambda+\mu)\D\int_0^t\int\left(\partial_j\Delta\dot{u}^j[\partial_t(\mathrm{div}u)+\mathrm{div}(u\mathrm{div}u)]+
\Delta\dot{u}^j\mathrm{div}(\partial_ju\mathrm{div}u)\right)dxds\vspace{.2cm} \\
&=(\lambda+\mu)\D\int_0^t\int\left(\partial_j\Delta\dot{u}^j[\partial_t(\mathrm{div}u)+\partial_ku^k\mathrm{div}u
+u^k\partial_k\mathrm{div}u)]-
\partial_i(\Delta\dot{u}^j)\partial_ju^i\mathrm{div}u\right)dxds\vspace{.2cm}\\
&\leq(\lambda+\mu)\D\int_0^t\int\partial_j\Delta\dot{u}^j\frac{D}{Dt}\mathrm{div}udxds+K\int_0^t\int|\nabla^2\dot{u}||\nabla u||\nabla^2u|dxds\vspace{.2cm} \\
&\le(\lambda+\mu)\D\int_0^t\int\left(\partial_j\Delta(\partial_tu^j+u\cdot\nabla{u}^j)\frac{D}{Dt}\mathrm{div}u\right)dxds
+K\int_0^t\|\nabla u\|_{L^\infty}\|\nabla^2\dot{u}\|_{L^2}\|\nabla^2u\|_{L^2}ds\vspace{.2cm} \\
\end{array}
\end{equation*}
\begin{equation}
\begin{array}[b]{ll}\label{2dbu-b1.32}
&\leq\D-(\lambda+\mu)\int_0^t\int|\nabla\frac{D}{Dt}\mathrm{div}u|^2dxds
+K\int_0^t\|\nabla u\|_{L^\infty}\|\nabla^2u\|_{L^2}\left(\|\nabla^2\dot{u}\|_{L^2}+\|\nabla\frac{D}{Dt}\mathrm{div}u\|_{L^2}\right)ds\vspace{.2cm} \\
&\leq\D-\frac{\lambda+\mu}{2}\int_0^t\int|\nabla\frac{D}{Dt}\mathrm{div}u|^2dxds+\frac{\mu}{10}\int_0^T\|\nabla^2\dot{u}\|_{L^2}^2ds+K.
\end{array}
\end{equation}
From (\ref{2dbu-b1.27})-(\ref{2dbu-b1.32}) and (\ref{2dbu-b1.23}),
we immediately obtain (\ref{2dbu-b1.26}).
\end{proof}

\begin{lem}\label{2dbu-a1.15}
Under the conditions of Theorem 1.1, it holds that
\begin{equation}\label{2dbu-b1.33}
\sup\limits_{t\in[0,T]}\|u\|_{H^3} \leq K.
\end{equation}
\begin{equation}\label{2dbu-b1.333}
\sup\limits_{t\in[0,T]}\|(m-\tilde{m},n-\tilde{n})\|_{H^3} \leq K.
\end{equation}
\begin{equation}\label{2dbu-b1.34}
\sup\limits_{t\in[0,T]}\|(m_t,n_t)\|_{H^2}\leq K.
\end{equation}
\begin{equation}\label{2dbu-b1.35}
\int_0^T\left(\|u\|_{H^4}^2+\|u_t\|_{H^2}^2\right)dt\leq K.
\end{equation}
\end{lem}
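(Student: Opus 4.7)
The aim is to close the scheme by establishing the four bounds (\ref{2dbu-b1.33})--(\ref{2dbu-b1.35}) in sequence, since each feeds the next. All the lower-order ingredients are already in place: the density bounds of Lemma \ref{2dbu-a1.8}, the $H^2$ bound on $(m-\tilde m,n-\tilde n)$ from Lemma \ref{2dbu-a1.12}, the $H^2$ bound on $u$ from Corollary \ref{2dbu-C3.2}, the $L^2(0,T;H^3)$ bound on $u$ from Lemma \ref{2dbu-a1.13}, the $L^2(0,T;L^\infty)$ bound on $\nabla u$ from Corollary \ref{2dbu-C3.4}, and, most importantly, the estimate $\sup_{t\in[0,T]}\|\nabla\dot u\|_{L^2}^2+\int_0^T\|\nabla^2\dot u\|_{L^2}^2\,dt\le K$ of Lemma \ref{2dbu-a1.14}.

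I would prove (\ref{2dbu-b1.33}) first. Viewing (\ref{2dbu-c1.4}) as a stationary Lam\'e system $\mu\Delta u+(\mu+\lambda)\nabla\mathrm{div}\,u=m\dot u+\nabla P(m,n)$ for each $t$, standard elliptic regularity gives
\[
\|u\|_{H^3}\le K\bigl(\|u\|_{L^2}+\|m\dot u\|_{H^1}+\|\nabla P(m,n)\|_{H^1}\bigr).
\]
A Leibniz expansion of $m\dot u$ and $\nabla P$, together with the embeddings $H^2(\mathbb{R}^3)\hookrightarrow W^{1,6}\cap L^\infty$ and the uniform bounds on $P_m,P_n,P_{mm},\ldots$ (which are available because of the two-sided density bounds of Lemma \ref{2dbu-a1.8}), reduces the right-hand side to quantities already controlled uniformly in $t$ by Lemmas \ref{2dbu-a1.12} and \ref{2dbu-a1.14}.

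The main difficulty is (\ref{2dbu-b1.333}). I would work with the formulations (\ref{2dbu-b1.20})--(\ref{2dbu-b1.222}) for $\Lambda_1(m)$ and $\Lambda_2(n)$; applying $\partial^{\alpha}$ with $|\alpha|=3$, multiplying by $\partial^{\alpha}\Lambda_i$ and integrating, the transport term contributes $\|\mathrm{div}\,u\|_{L^\infty}\|\Lambda_i\|_{H^3}^2$ after integration by parts, the commutators contribute a factor $(1+\|\nabla u\|_{L^\infty}+\|u\|_{H^3})(\|\Lambda_1\|_{H^3}^2+\|\Lambda_2\|_{H^3}^2)$, and the forcing contributes $\|\nabla^3 F\|_{L^2}^2+\|\nabla^3 P\|_{L^2}^2$ plus lower-order terms. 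The obstacle lies in $\|\nabla^3 F\|_{L^2}$: differentiating (\ref{2dbu-c1.5}) gives $\|\nabla^3 F\|_{L^2}\le K\|\nabla^2(m\dot u)\|_{L^2}$, and the Leibniz expansion produces $\|\nabla^2 m\|_{L^3}\|\dot u\|_{L^6}$, which at first sight requires $\|\nabla^3 m\|_{L^2}$, precisely the unknown. The resolution is the Gagliardo--Nirenberg interpolation
\[
\|\nabla^2 m\|_{L^3}\le K\|\nabla^2 m\|_{L^2}^{1/2}\|\nabla^3 m\|_{L^2}^{1/2},
\]
followed by Young's inequality to absorb the resulting $\varepsilon\|\nabla^3 m\|_{L^2}^2$-term on the left. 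Gronwall's inequality, together with the time-integrability of $\|\nabla u\|_{L^\infty}$, $\|u\|_{H^3}$ and $\|\nabla^2\dot u\|_{L^2}$ over $[0,T]$, then yields (\ref{2dbu-b1.333}).

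With the two $H^3$ bounds in hand, (\ref{2dbu-b1.34}) and (\ref{2dbu-b1.35}) follow routinely. The identities $m_t=-\mathrm{div}(mu)$, $n_t=-\mathrm{div}(nu)$, combined with the Banach-algebra property of $H^2(\mathbb{R}^3)$, give $\|m_t\|_{H^2}+\|n_t\|_{H^2}\le K(1+\|m-\tilde m\|_{H^3}+\|n-\tilde n\|_{H^3})\|u\|_{H^3}\le K$. Another application of elliptic regularity to the Lam\'e equation yields $\|u\|_{H^4}\le K(\|u\|_{L^2}+\|m\dot u\|_{H^2}+\|\nabla P\|_{H^2})$, in which $\|m\dot u\|_{H^2}$ is controlled by $\|\nabla^2\dot u\|_{L^2}\in L^2(0,T)$ (now that $m,n\in L^\infty(0,T;H^3)$ furnishes all the product estimates needed) and $\|\nabla P\|_{H^2}$ is uniformly bounded; hence $\int_0^T\|u\|_{H^4}^2\,dt\le K$. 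Finally, $u_t=\dot u-u\cdot\nabla u$, together with the uniform-in-$t$ $H^2$-algebra bound on $u\cdot\nabla u$ and $\int_0^T\|\dot u\|_{H^2}^2\,dt\le K$, yields the bound on $\int_0^T\|u_t\|_{H^2}^2\,dt$. The single genuine obstacle throughout is the delicate absorption argument of the second step.
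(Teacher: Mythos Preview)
Your proposal is correct and follows essentially the same route as the paper: elliptic regularity for the Lam\'e system to get $\|u\|_{H^3}$ (the paper phrases this via the $F,\omega$ decomposition (\ref{2dbu-w1.55}), (\ref{2dbu-c1.5}), (\ref{2dbu-c1.6}), which is equivalent), then the $H^3$ energy estimate on $\Lambda_1,\Lambda_2$ with the Gagliardo--Nirenberg interpolation $\|\nabla^2 m\|_{L^3}\le K\|\nabla^2 m\|_{L^2}^{1/2}\|\nabla^3 m\|_{L^2}^{1/2}$ to handle $\|F\|_{H^3}$, and finally the routine derivation of (\ref{2dbu-b1.34}) and (\ref{2dbu-b1.35}). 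One small remark on wording: there is no dissipative term in the $\Lambda_i$ equation to ``absorb $\varepsilon\|\nabla^3 m\|_{L^2}^2$ into on the left''; what actually happens (and what the paper does) is that this term is bounded by $K\|\Lambda_1\|_{H^3}^2$ and simply enters the Gronwall coefficient, which is integrable over $[0,T]$. Your argument goes through either way.
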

\begin{proof}
From (\ref{2dbu-w1.55}), (\ref{2dbu-c1.5}), (\ref{2dbu-c1.6}),
(\ref{2dbu-q1.1}), (\ref{2dbu-b1.19}), (\ref{2dbu-b1.26}) and the
Gagliardo-Nirenberg inequality, we have
\begin{equation}\label{2dbu-b1.41}
\begin{array}[b]{ll}
\D\| u\|_{H^3}&\leq K\D(\| u\|_{L^2}+\|\nabla F\|_{H^1}+\| \nabla (P(m,n)-P(\tilde{m},\tilde{n}))\|_{H^1}+\|\nabla \omega\|_{H^1})\vspace{.2cm} \\
&\leq K\D(\| u\|_{L^2}+\|\nabla (m\dot{u})\|_{L^2}+\| m-\tilde{m}\|_{H^2}+\| n-\tilde{n}\|_{H^2})\vspace{.2cm} \\
&\leq K\D(\| u\|_{L^2}+\|\nabla \dot{u}\|_{L^2}+\|\nabla m \dot{u}\|_{L^2}+\| m-\tilde{m}\|_{H^2}+\| n-\tilde{n}\|_{H^2})\vspace{.2cm} \\
&\leq K.
\end{array}
\end{equation}
Thus, we can get (\ref{2dbu-b1.33}).

Differentiating (\ref{2dbu-b1.20}) with respect to $x_i$ , $x_j$,
and $x_k$, multiplying both sides of the resulting equation by
$\partial_i\partial_j\partial_k\Lambda_1(m)$, integrating the result
equality over $\mathbb{R}^3$, we obtain
\begin{equation}
\begin{array}[b]{ll}
\D\frac{1}{2}\frac{d}{dt}\int
|\partial_i\partial_j\partial_k\Lambda_1(m)|^2dx&\D\leq\int
(|\partial_i\partial_j\partial_k\Lambda_1(m)||\partial_i\partial_j\partial_kF|\vspace{.2cm} \\
&\ \ \
+|\partial_i\partial_j\partial_k\Lambda_1(m)||\partial_i\partial_j\partial_kP|+|\partial_i\partial_j\partial_k\Lambda_1(m)|
|\partial_i\partial_j\partial_k(u\cdot\nabla\Lambda_1)|)dx.
\end{array}
\end{equation}
Using the Cauchy inequality and the Gagliardo-Nirenberg inequality,
we have
\begin{equation}\label{2dbu-b1.42}
\|\Lambda_1(t)\|_{H^3}^2\leq\|\Lambda_1(0)\|_{H^3}^2+K\int_0^t((\|F\|_{H^3}+\|\Lambda_2\|_{H^3})\|\Lambda_1\|_{H^3}+(
1+\|\nabla u\|_{L^\infty}+\|u\|_{H^3})\|\Lambda_1\|_{H^3}^2)ds.
\end{equation}
Similarly, from (\ref{2dbu-b1.222}), we have
\begin{equation}\label{2dbu-b1.43}
\|\Lambda_2(t)\|_{H^3}^2\leq\|\Lambda_2(0)\|_{H^3}^2+K\int_0^t((\|F\|_{H^3}+\|\Lambda_1\|_{H^3})\|\Lambda_2\|_{H^3}+(
1+\|\nabla u\|_{L^\infty}+\|u\|_{H^3})\|\Lambda_2\|_{H^3}^2)ds.
\end{equation}
From (\ref{2dbu-c1.5}), (\ref{2dbu-b1.8}), (\ref{2dbu-b1.80}),
(\ref{2dbu-b1.19}), (\ref{2dbu-b1.26}) and the Gagliardo-Nirenberg
inequality, we have
\begin{equation}\label{2dbu-b1.44}
\begin{array}[b]{ll}
\|F\|_{H^3}&\leq K(\|F\|_{L^2}+\|\nabla m \dot{u}\|_{H^1}+\| m \nabla\dot{u}\|_{H^1})\vspace{.2cm} \\
&\leq K(1+\|\nabla^2 m \dot{u}\|_{L^2}+\|\nabla^2\dot{u}\|_{L^2}+\|\nabla m \nabla\dot{u}\|_{L^2})\vspace{.2cm} \\
&\leq K(1+\|\nabla^2\dot{u}\|_{L^2}+\|\nabla^2 m \|_{L^3}\|\dot{u}\|_{L^6}+\|\nabla m \|_{L^3}\|\nabla \dot{u}\|_{L^6})\vspace{.2cm} \\
&\leq K(1+\|\nabla^2\dot{u}\|_{L^2}+\|\nabla^2 m
\|_{L^2}^{\frac{1}{2}}\|\nabla^3 m
\|_{L^2}^{\frac{1}{2}}\|\nabla\dot{u}\|_{L^2}+\|\nabla m
\|_{L^2}^{\frac{1}{2}}\|\nabla^2
m \|_{L^2}^{\frac{1}{2}}\|\nabla^2 \dot{u}\|_{L^2})\vspace{.2cm} \\
&\leq K(1+\|\nabla^2 \dot{u}\|_{L^2}+\|\Lambda_1\|_{H^3}).
\end{array}
\end{equation}
Thus, from  (\ref{2dbu-b1.26}), (\ref{2dbu-b1.33}),
 (\ref{2dbu-b1.42}), (\ref{2dbu-b1.43}),
and (\ref{2dbu-b1.44}), we have
\begin{equation}\label{2dbu-b1.45}
\begin{array}{ll}
&\ \ \ \|\Lambda_1(t)\|_{H^3}^2+\|\Lambda_2(t)\|_{H^3}^2
\vspace{.2cm}\\
&\D\leq
\|\Lambda_1(0)\|_{H^3}^2+\|\Lambda_2(0)\|_{H^3}^2+K\int_0^t((1+\|\nabla
u\|_{L^\infty}+\|\nabla^2\dot{u}\|_{L^2}+\|u\|_{H^3})(\|\Lambda_1\|_{H^3}^2+\|\Lambda_2\|_{H^3}^2)+
\|\nabla^2\dot{u}\|_{L^2}^2)ds\vspace{.2cm} \\
&\leq
K+K\D\int_0^t(1+\|\nabla^2\dot{u}\|_{L^2})(\|\Lambda_1\|_{H^3}^2+\|\Lambda_2\|_{H^3}^2)ds,
\end{array}
\end{equation}
where we have used $m_0-\tilde{m}\in H^3$, $n_0-\tilde{n}\in H^3$.

Using (\ref{2dbu-b1.26}), (\ref{2dbu-b1.45}) and the Gronwall's
inequality, we have
\begin{equation}\label{2dbu-b1.46}
\|\Lambda_1(t)\|_{H^3}^2+\|\Lambda_2(t)\|_{H^3}^2\leq K.
\end{equation}
Thus, we can immediately obtain (\ref{2dbu-b1.333}).

From $(1.1)_1$ , $(1.1)_2$ , (\ref{2dbu-b1.33}) and
(\ref{2dbu-b1.333}), we get
\begin{equation*}
\sup\limits_{t\in[0,T]}\|(m_t,n_t)\|_{H^2}\leq K.
\end{equation*}
Then we  get (\ref{2dbu-b1.34}).

From  (\ref{2dbu-c1.6}), (\ref{2dbu-b1.26}), (\ref{2dbu-b1.46}) and
(\ref{2dbu-b1.44}), we have
\begin{equation}\label{2dbu-b1.47}
\int_0^T\left(\| F\|_{H^3}^2+\| \omega\|_{H^3}^2\right)dt\leq K.
\end{equation}
From (\ref{2dbu-w1.55}), (\ref{2dbu-q1.1}), (\ref{2dbu-b1.333}) and
(\ref{2dbu-b1.47}), we have
\begin{equation}\label{2dbu-b1.25}
\begin{array}[b]{ll}
\D\int_0^T\| u\|_{H^4}^2dt&\leq K\int_0^T\left(\| u\|_{L^2}^2+\|\nabla F\|_{H^2}^2+\| \nabla (P(m,n)-P(\tilde{m},\tilde{n}))\|_{H^2}^2+\|\nabla \omega\|_{H^2}^2\right)dt\vspace{.2cm} \\
&\leq K\D\int_0^T\left(\| u\|_{L^2}^2+\| F\|_{H^3}^2+\| m-\tilde{m}\|_{H^3}^2+\| n-\tilde{n}\|_{H^3}^2+\| \omega\|_{H^3}^2\right)dt\vspace{.2cm} \\
&\leq K.
\end{array}
\end{equation}
From $\dot{u}=u_t+u\cdot\nabla u$, (\ref{2dbu-b1.26}) and
(\ref{2dbu-b1.33}), we have
\begin{equation}\label{2dbu-b1.35}
\int_0^T\|u_t\|_{H^2}^2dt\leq K.
\end{equation}
Then we get (\ref{2dbu-b1.35}).

This completes the proof of Lemma \ref{2dbu-a1.15}.
\end{proof}

{\noindent\bf Step 3:} Completion of the proof of Theorem 1.1.\\

By Lemmas \ref{2dbu-a1.0}, \ref{2dbu-a1.8} and \ref{2dbu-a1.15}, we
get (\ref{aim1}) and (\ref{aim2}), which concludes a contradiction.
Thus, $T^*=\infty$. The proof of Theorem 1.1 is complete.
\section*{Acknowledgements} The authors would like to thank the anonymous referees for their constructive suggestions and
kindly comments. The authors also thank professor Changjiang Zhu and
Dr. Lei Yao for their helpful discussion. This work was supported by
the National Natural Science Foundation of China $\#$10625105,
$\#$11071093, the PhD specialized grant of the Ministry of Education
of China $\#$20100144110001, and the Special Fund for Basic
Scientific Research  of Central Colleges $\#$CCNU10C01001.

\end{document}